\documentclass[11pt]{amsart}
\usepackage{amsmath}
\usepackage{verbatim}
\usepackage{amsfonts}
\usepackage{amssymb}
\usepackage{amsthm}
\usepackage{mathrsfs} 
\usepackage{tikz-cd}
\usepackage{color}
\usepackage[all]{xy}  
\usepackage{enumerate}
\usepackage[margin=1in]{geometry}
\usepackage{mathrsfs}
\usepackage{stmaryrd}
\usepackage{bbold}
\usepackage{microtype}
\usepackage[bookmarks=false,colorlinks=true,hyperindex,citecolor=green,linkcolor=red]{hyperref}

\theoremstyle{definition}
\numberwithin{equation}{section}

\newtheorem{lem}{Lemma}[section]

\newtheorem{defn}{Definition}[section]

\newtheorem{prop}{Proposition}[section]

\newtheorem*{thm}{Theorem}

\newtheorem{theor}{Theorem}[section]

\newtheorem{corol}{Corollary}[section]

\newenvironment{changemargin}[2]{
  \begin{list}{}{
      \setlength{\topsep}{0cm}
      \setlength{\leftmargin}{#1}
      \setlength{\rightmargin}{#2}
    }
  \item[]
}{
  \end{list}
}




\newcommand{\mf}[1]{\mathfrak{#1}}

\newcommand{\md}[1]{#1\raise0.7ex\hbox{$\scriptstyle\vee$}}
\newcommand{\mda}[2]{#1\raise0.7ex\hbox{$\scriptstyle\vee_{#2}$}}


\title[The local cohomology of a parameter ideal]{The local cohomology of a parameter ideal with respect to an arbitrary ideal}
  
\author[Monica Lewis]{Monica Ann Lewis}
\address{Department of Mathematics, University of Michigan, Ann Arbor, MI 48109-1043, USA}
\email{malewi@umich.edu}

\subjclass[2010]{Primary 13D45; Secondary 13H05, 13H10.}
\keywords{local cohomology, associated primes, complete intersections, parameter ideals.}

\begin{document}

\vspace{-1.5em}
\begin{abstract}
  Let $R$ be a regular ring, let $J$ be an ideal generated by a regular sequence of codimension at least $2$, and let $I$ be an ideal containing $J$.
  We give an example of a module $H^3_I(J)$ with infinitely many associated primes, answering a question of Hochster and N\'u\~nez-Betancourt in the negative. In fact, for $i\leq 4$, we show that under suitable hypotheses on $R/J$, $\text{Ass}\,H^{i}_I(J)$ is finite if and only if $\text{Ass}\,H^{i-1}_I(R/J)$ is finite. Our proof of this statement involves a novel generalization of an isomorphism of Hellus, which may be of some independent interest. The finiteness comparison between $\text{Ass}\, H^i_I(J)$ and $\text{Ass}\, H^{i-1}_I(R/J)$ tends to improve as our hypotheses on $R/J$ become more restrictive. To illustrate the extreme end of this phenomenon, at least in the prime characteristic $p>0$ setting, we show that if $R/J$ is regular, then $\text{Ass}\, H^i_I(J)$ is finite for all $i\geq 0$.
\end{abstract}
\maketitle

\section{Introduction}
Local cohomology modules over a regular ring are known in many cases to exhibit remarkable finiteness properties. For a regular ring $S$, one may ask whether the set $\text{Ass}\, H^i_I(S)$ is finite for any ideal $I\subseteq S$ and any $i\geq 0$. A celebrated theorem originally due to Huneke and Sharp (later generalized by Lyubeznik's theory of $F$-modules \cite{lyufmod}) says that this is indeed the case if $S$ has prime characteristic $p>0$ \cite[Corollary 2.3]{hush}. Lyubeznik proved the corresponding statement for smooth $K$-algebras when $K$ is a field of characteristic $0$ \cite[Remark 3.7(i)]{lyudmod} and for any regular local ring containing $\mathbb{Q}$ \cite[Theorem 3.4]{lyudmod}. Concerning regular rings of mixed characteristic, the property is known to hold when $S$ is an unramified regular local ring \cite[Theorem 1]{lyuunram}, a smooth $\mathbb{Z}$-algebra \cite[Theorem 1.2]{bhatt}, or is local and of dimension $\leq 4$ \cite[Theorem 2.9]{marley}.

The finiteness properties of local cohomology modules $H^i_I(S)$ when $S$ is a complete intersection ring are far less well-understood than when $S$ is regular. Infinite sets of associated primes can be found even when $S$ is a hypersurface ring. For example, Singh describes a hypersurface ring $S$ finitely generated over $\mathbb{Z}$,
$$
S = \frac{\mathbb{Z}[u,v,w,x,y,z]}{ux+vy+wz}
$$
such that for all prime integers $p$, the module $H^3_{(x,y,z)}(S)$ has a nonzero $p$-torsion element \cite{singhp}. This is not just a global phenomenon: Katzman later gave a local hypersurface ring $S$ containing a field $K$,
$$
S = \frac{K[[u,v,w,x,y,z]]}{wu^2x^2-(w+z)uxvy+zv^2y^2}
$$
such that $\text{Ass}\, H^2_{(x,y)}(S)$ is infinite \cite{katzinf}. Singh and Swanson construct examples of equicharacteristic local hypersurface rings to demonstrate that $\text{Ass}\, H^3_I(S)$ can be infinite even if $S$ is a UFD, an $F$-regular ring of characteristic $p>0$, or a characteristic $0$ ring with rational singularities \cite[Theorem 5.4]{singhswan}.

Surprisingly, the local cohomology of a hypersurface ring is still known to possess striking finiteness properties, at least in the characteristic $p>0$ setting. A result proved independently by Katzman and Zhang \cite[Theorem 7.1]{katzhang} or by Hochster and N\'{u}\~{n}ez-Betancourt \cite[Corollary 4.13]{hochnb} says that if $R$ is a regular ring of prime characteristic $p>0$, $f\in R$ is a nonzerodivisor, and $S=R/f$, then for any ideal $I$ and any $i\geq 0$, $H^i_I(S)$ has only finitely many minimal primes. Equivalently, $\text{Supp}\, H^i_I(S)$ is a Zariski closed set.

There are other situations in which the closed support property holds. For example, if $M$ finitely generated over $S$, then the set $\text{Supp}\,H^i_I(M)$ is known to be closed (i) when $S$ has prime characteristic $p>0$, $I$ is generated by $i$ elements, and $M=S$ \cite[Theorem 2.10]{katztop}, (ii) when $S$ is standard graded, $M$ is graded, $I$ is the irrelevant ideal, and $i$ is the cohomological dimension of $I$ on $M$ \cite[Theorem 1]{rottsega}, (iii) when $S$ is local of dimension at most $4$ \cite[Proposition 3.4]{hunkatmar}, or (iv) when $I$ has cohomological dimension at most $2$ \cite[Theorem 2.4]{hunkatmar}. It is not presently known how far these results generalize. Hochster asks whether the support of $H^i_I(M)$ must be closed for all $i\geq 0$ and all ideals $I$, where $S$ is assumed only to be Noetherian, and $M$ may be any finitely generated $S$-module \cite[Question 2]{hochreview}. Even at this level of generality, the question remains open.

The present paper is motivated by Hochster's question in the complete intersection setting. Namely, does the closed support property of characteristic $p>0$ hypersurface rings \cite{katzhang,hochnb} generalize to complete intersection rings of higher codimension?

\vspace{0.5em}
\noindent \textbf{Question 1a.} Let $S$ be a complete intersection ring, let $I$ be an arbitrary ideal of $S$, and fix $i\geq 0$. Is $\text{Supp}\, H^i_I(S)$ a Zariski closed set?
\vspace{0.5em}

For simplicity, we restrict our focus to when $S$ is given by a presentation $S=R/J$ for $R$ a regular ring and $J$ an ideal generated by a regular sequence in $R$. Not all regular rings $R$ are known to have the property that $\text{Ass}\, H^i_I(R)$ is always finite. In order to avoid potential complications arising from $R$, we impose an additional hypothesis.

\begin{defn}
Call a Noetherian ring $R$ {\em LC-finite} if, for any ideal $I$ and any $i\geq 0$, the module $H^i_I(R)$ has a finite set of associated primes.
\end{defn}

We do not assume regularity in the definition. A semilocal ring of dimension at most $1$ is trivially LC-finite, but can easily fail to be regular. Likewise, any $F$-finite ring with finite $F$-representation type (FFRT) is LC-finite \cite[Theorem 5.7]{hochnb}, but need not be regular.
The class of LC-finite regular rings includes, for example, all regular local rings of dimension $\leq 4$, all regular rings of prime characteristic $p$, regular local rings containing $\mathbb{Q}$, smooth $K$-algebras for $K$ a field of characteristic $0$, unramified regular local rings of mixed characteristic, and smooth $\mathbb{Z}$-algebras. The class of LC-finite rings is closed under localization. If there is a finite set of maximal ideals $\mathfrak{m}_1,\cdots,\mathfrak{m}_t$ of $R$ such that $\text{Spec}(R)-\{\mathfrak{m}_1,\cdots,\mathfrak{m}_t\}$ can be covered by finitely many charts $\text{Spec}(R_f)$, each of which is LC-finite, then $R$ is LC-finite. For example, a ring of prime characteristic $p$ with isolated singularities is LC-finite. If $R$ is LC-finite and $A\to R$ is pure (e.g., if $A$ is a direct summand of $R$), then $A$ is LC-finite \cite[Theorem 3.1(d)]{hochnb}. We narrow the scope of Question 1a as follows.

\vspace{0.5em}
\noindent \textbf{Question 1b.} Let $R$ be an LC-finite regular ring, $J\subseteq R$ be an ideal
generated by a regular sequence, and $S=R/J$. Let $I$ be an arbitrary ideal of $S$, and fix $i\geq 0$. Is $\text{Supp}\, H^i_I(S)$ a Zariski closed set?
\vspace{0.5em}

If $R$ is a regular ring of prime characteristic $p>0$, Hochster and N\'{u}\~{n}ez-Betancourt show that if $\text{Ass}\, H^{i+1}_I(J)$ is finite, then $\text{Supp}\, H^i_I(R/J)$ is closed \cite[Theorem 4.12]{hochnb}. Their theorem raises an immediate question. Although (to the best of our knowledge) it is not yet known whether Hochster and N\'{u}\~{n}ez-Betancourt's result generalizes to (equal or mixed) characteristic $0$, we will nonetheless investigate the following question for LC-finite regular rings of any characteristic.

\vspace{0.5em}
\noindent \textbf{Question 2.}
Let $R$ be an LC-finite regular ring, $J\subseteq R$ be an ideal
generated by a regular sequence, $I$ be an ideal of $R$ containing $J$ (corresponding to an arbitrary ideal of $R/J$), and fix $i\geq 0$. Is $\text{Ass}\, H^i_I(J)$ a finite set?
\vspace{0.5em}

 In Section 2, using some properties of the ideal transform functor associated with $I$, we show that, even under weaker hypotheses on $R$ and $J$, this question has a positive answer when $i=2$. These hypotheses include the case where $J=R$ and $R$ is a $\mathbb{Q}$-factorial normal ring (cf. Dao and Quy \cite[Theorem 3.3]{daoquyH2}).
\begin{thm}[\ref{H2}]
  Let $R$ be a \textit{locally almost factorial} (see Definition \ref{laf}) Noetherian normal ring, and let $I$, $J$ be ideals of $R$. The set $\text{Ass}\, H^2_I(J)$ is finite.
\end{thm}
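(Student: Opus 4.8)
The plan is to reduce the statement, via the ideal transform functor $D_I$ and the long exact sequence of $0\to J\to R\to R/J\to 0$, to the classical finiteness of $\text{Ass}\,H^1_I(-)$ on finitely generated modules together with Dao and Quy's theorem on $H^2_I(R)$. Since a Noetherian normal ring is a finite product of normal domains, and both $H^2_I(J)$ and the formation of $\text{Ass}$ decompose along such a product, I may assume $R$ is a normal domain.

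First I would dispose of $J$. Applying local cohomology to $0\to J\to R\to R/J\to 0$ gives an exact sequence $H^1_I(R/J)\to H^2_I(J)\to H^2_I(R)$, so $H^2_I(J)$ is an extension of a submodule of $H^2_I(R)$ by a quotient of $H^1_I(R/J)$, and hence $\text{Ass}\,H^2_I(J)\subseteq\text{Ass}\,H^1_I(R/J)\cup\text{Ass}\,H^2_I(R)$. The set $\text{Ass}\,H^1_I(M)$ is finite for every finitely generated $R$-module $M$: replacing $M$ by $M/\Gamma_I(M)$ alters neither $H^1_I(M)$ nor the finiteness in question, so one may take $\Gamma_I(M)=0$ and choose an $M$-regular element $x\in I$; the cohomology sequence of multiplication by $x$ then identifies $(0:_{H^1_I(M)}x)$ with the finitely generated module $\Gamma_I(M/xM)$, and since $H^1_I(M)$ is $x$-power torsion (being $I$-torsion with $x\in I$) its associated primes agree with those of $(0:_{H^1_I(M)}x)$. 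Note that $J$ plays no special role here — in particular it need not be a parameter ideal.

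It remains to bound $\text{Ass}\,H^2_I(R)$, and in fact I claim $\text{Ass}\,H^2_I(R)\subseteq\text{Min}(R/I)$. Any $\mathfrak p\in\text{Ass}\,H^2_I(R)$ lies in $V(I)$ and has height $\geq 2$, since local cohomology vanishes above dimension. Suppose such a $\mathfrak p$ is \emph{not} minimal over $I$; localizing at $\mathfrak p$, I may assume $(R,\mathfrak m)$ is a normal local domain with $\text{Cl}(R)$ torsion (local almost-factoriality passes to localizations), $\dim R/I\geq 1$, and $\mathfrak m\in\text{Ass}\,H^2_I(R)$. Let $\mathfrak q_1,\dots,\mathfrak q_s$ be the minimal primes of $I$, none of which is $\mathfrak m$. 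If some $\mathfrak q_i$ has height $1$, then torsion-ness of its class gives a symbolic power $\mathfrak q_i^{(n_i)}=(g_i)$; writing $g$ for the product of these $g_i$ over the height-one indices and $B$ for the intersection of the remaining (height $\geq 2$) minimal primes, one obtains $\sqrt I=\sqrt{(g)\cap B}$, whence $\text{Spec}\,R\setminus V(I)=\text{Spec}\,R_g\setminus V(BR_g)$ is the complement in the normal scheme $\text{Spec}\,R_g$ of a closed set of codimension $\geq 2$. Consequently $D_I(R)=\Gamma\bigl(\text{Spec}\,R\setminus V(I),\mathcal O\bigr)=R_g$, and the standard isomorphism $H^2_I(R)\cong H^2_I(D_I(R))$ gives $H^2_I(R)\cong H^2_{IR_g}(R_g)=H^2_{BR_g}(R_g)$; but $g\in\mathfrak q_1\subseteq\mathfrak m$, so this module is supported, as an $R$-module, away from $\mathfrak m$, contradicting $\mathfrak m\in\text{Ass}\,H^2_I(R)$. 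Hence either $\mathfrak p\in\text{Min}(R/I)$, or $IR_{\mathfrak p}$ has all its minimal primes of height $\geq 2$ while $\mathfrak p$ is not minimal over $I$.

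This last, residual case is the main obstacle: here $D_I(R)=R$ locally at $\mathfrak p$, the ideal transform offers nothing, and one is exactly in the situation of Dao and Quy \cite[Theorem 3.3]{daoquyH2} — a normal local domain with torsion divisor class group and $\dim R/I\geq 1$ has $\mathfrak m\notin\text{Ass}\,H^2_I(R)$ — which I would cite or reprove. For orientation, the Grothendieck spectral sequence $H^p_{\mathfrak m}(H^q_I(R))\Rightarrow H^{p+q}_{\mathfrak m}(R)$, combined with the vanishing $H^0_I(R)=H^1_I(R)=0$ that the $S_2$ property forces in this case, yields $H^0_{\mathfrak m}(H^2_I(R))\cong H^2_{\mathfrak m}(R)$; so the residual case amounts to controlling the depth-$2$ locus of $R$, which Dao and Quy handle using the class group. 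Assembling the three steps proves the theorem, with everything outside the residual case being bookkeeping around $D_I$.
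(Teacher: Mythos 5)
Your reduction of $H^2_I(J)$ to $H^2_I(R)$ via the long exact sequence contains the central gap. From $H^1_I(R/J)\xrightarrow{\delta} H^2_I(J)\to H^2_I(R)$ you may conclude $\text{Ass}\, H^2_I(J)\subseteq\text{Ass}(\operatorname{im}\delta)\cup\text{Ass}\, H^2_I(R)$, but $\operatorname{im}\delta$ is the \emph{quotient} $H^1_I(R/J)/\operatorname{im}\bigl(H^1_I(R)\to H^1_I(R/J)\bigr)$, and associated primes of a quotient are not contained in those of the original module (every module is a quotient of a free module, whose $\text{Ass}$ is tiny). Your argument for the finiteness of $\text{Ass}\, H^1_I(M)$ is correct but says nothing about this quotient. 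The failure is concentrated exactly in the main case: when $\text{depth}_I(R)\geq 2$ one has $H^1_I(R)=0$, $\delta$ is injective, and your step is sound (this is essentially the paper's Lemma \ref{depth1}, which instead applies Brodmann--Lashgari directly to $H^2_I(J)$ using that $H^1_I(J)\simeq\Gamma_I(R/J)$ is finitely generated); but when $\text{depth}_I(R)=1$ the module $H^1_I(R)$ is typically not finitely generated and the cokernel is uncontrolled. The paper's fix is to prove $D_I(-)\simeq D_{I_0}(-)_y$, hence $H^2_I(-)\simeq H^2_{I_0}(-)_y$, as an isomorphism of \emph{functors} (Proposition \ref{loc}, Corollary \ref{depth1coh}) and apply it to the module $J$ itself, reducing to the depth-$\geq 2$ case for $J$. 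Your computation $D_I(R)=R_g$ is the module-level instance of this for $M=R$ only, which is not enough.

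A secondary problem is the claim $\text{Ass}\, H^2_I(R)\subseteq\text{Min}(R/I)$, which is stronger than finiteness and appears to be false. In your residual case your own spectral-sequence computation gives $H^0_{\mathfrak m}(H^2_I(R))\simeq H^2_{\mathfrak m}(R)$, so $\mathfrak m\in\text{Ass}\, H^2_I(R)$ precisely when $\text{depth}(R)=2$; for a normal, non-Cohen--Macaulay, locally almost factorial $R$ with $\text{depth}(R_{\mathfrak p})=2$ at some non-minimal $\mathfrak p\in V(I)$ of height $\geq 3$, the containment fails. So the statement you want to import from Dao--Quy in that case is exclusion of $\mathfrak m$ from $\text{Ass}$, which is not what their finiteness theorem provides, and your prime-by-prime assembly (ruling out each non-minimal $\mathfrak p$ by localizing) cannot be repaired by substituting ``$\text{Ass}$ is finite after localizing at each $\mathfrak p$.'' The paper sidesteps both issues by never passing through $H^2_I(R)$: after the functorial localization it lands in the case $\text{depth}_I(R)\geq 2$, where $H^2_I(J)$ is the first possibly-non-finitely-generated local cohomology module of $J$ and Brodmann--Lashgari finishes the argument.
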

The result does not generalize to $i>2$. In Section 3, we give an example where $H^3_I(J)$ has infinitely many associated primes. In this example, $R$ is a $7$-dimensional polynomial ring, $J$ is generated by a regular sequence of length $2$, and $I$ is $4$-generated. Question 2 has a negative answer at the level of generality in which it is stated. However, our counterexample crucially requires $\text{Ass}\, H^2_I(R/J)$ to be infinite. In fact, $R/J$ in our counterexample is isomorphic to Katzman's hypersurface \cite{katzinf}. The nature of the counterexample therefore begs the following natural question that, to the best of our knowledge, remains open.

\vspace{0.5em}
\noindent \textbf{Question 3.}
Let $R$ be an LC-finite regular ring, $J\subseteq R$ be an ideal generated by a regular sequence, $I$ be an ideal of $R$ containing $J$, and fix $i\geq 0$. Does the finiteness of $\text{Ass}\, H^{i-1}_I(R/J)$ imply the finiteness of $\text{Ass}\, H^i_I(J)$?
\vspace{0.5em}

Notice that if $\text{depth}_J(R)=1$, then $J\simeq R$ as an $R$-module, so the question has a trivially positive answer. The question is only interesting when $\text{depth}_J(R)\geq 2$, and our methods in fact require $\text{depth}_J(R)\geq 2$.

We investigate this question in Section 4 for various small values of $i$. We give a partial positive answer to both this question and its converse when $i\leq 4$. We emphasize in this result that $R$ is allowed to have any characteristic, even mixed characteristic, so long as it satisfies the LC-finiteness condition.

\begin{thm}[\ref{smalli}]
  Let $R$ be an LC-finite regular ring, let $J\subseteq R$ be an ideal generated by a regular sequence of length $j\geq 2$, and let $S=R/J$. For $I$ an ideal of $R$ containing $J$,
  \begin{enumerate}
  \item[(i)] $\text{Ass}\, H^i_I(J)$ and $\text{Ass}\, H^{i-1}_I(S)$ are always finite for $i\leq 2$.
  \item[(ii)] If the irreducible components of $\text{Spec}(S)$ are disjoint (e.g. $S$ is a domain), then $\text{Ass}\, H^{3}_I(J)$ is finite if and only if $\text{Ass}\,{H^2_I(S)}$ is finite.
  \item[(iii)] If $S$ is normal and locally almost factorial (e.g. $S$ is a UFD), then $\text{Ass}\,{H^4_I(J)}$ is finite if and only if $\text{Ass}\, H^3_I(S)$ is finite.
  \end{enumerate}
\end{thm}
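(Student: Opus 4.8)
The plan is to feed the short exact sequence $0\to J\to R\to S\to 0$ into the long exact sequence of local cohomology supported in $I$, use the LC-finiteness of $R$ to discard most of the terms, and thereby reduce each finiteness statement to the finiteness of $\text{Ass}$ of a single connecting-map cokernel; the generalized Hellus isomorphism then re-expresses that cokernel in terms of local cohomology of $R$ over the locus where $S$ is badly behaved, which is precisely what the geometric hypotheses keep small.

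Part (i) is formal. For $i=0$ there is nothing to do, since $H^0_I(J)\subseteq R$ and $H^{-1}_I(S)=0$. For $i=1$, the module $H^0_I(S)$ is finitely generated, and the long exact sequence presents $H^1_I(J)$ as an extension of the finitely generated module $\text{coker}(H^0_I(R)\to H^0_I(S))$ by a submodule of $H^1_I(R)$, whose $\text{Ass}$ is finite because $R$ is LC-finite. For $i=2$, $\text{Ass}\,H^1_I(S)$ is finite by the general finiteness of associated primes of $H^1$ of any ideal acting on any finitely generated module over a Noetherian ring, while $\text{Ass}\,H^2_I(J)$ is finite by Theorem \ref{H2} applied to $R$ itself --- legitimate because a regular ring is normal and, being locally a UFD, locally almost factorial.

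For (ii) and (iii), fix $i\in\{3,4\}$ and consider the segment
\[
H^{i-1}_I(R)\xrightarrow{\ \alpha\ }H^{i-1}_I(S)\xrightarrow{\ \delta\ }H^i_I(J)\xrightarrow{\ \beta\ }H^i_I(R)
\]
of the long exact sequence, in which the two outer modules have finite $\text{Ass}$ by LC-finiteness. Set $C=\text{coker}(\alpha)$. From the sequence, $C\cong\ker(\beta)$ is a submodule of $H^i_I(J)$ with quotient a submodule of $H^i_I(R)$, so $\text{Ass}\,H^i_I(J)$ is finite if and only if $\text{Ass}\,C$ is finite. On the other side, $H^{i-1}_I(S)$ is an extension of $C$ (as a quotient) by $\text{image}(\alpha)$, and $\text{image}(\alpha)$ is itself a quotient of $H^{i-1}_I(R)$. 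Consequently, granting that $\text{image}(\alpha)$ has finite $\text{Ass}$, the implication ``$\text{Ass}\,H^i_I(J)$ finite $\Rightarrow\text{Ass}\,H^{i-1}_I(S)$ finite'' is immediate, and the theorem reduces to the reverse implication: promoting finiteness of $\text{Ass}\,H^{i-1}_I(S)$ to finiteness of $\text{Ass}$ of its quotient $C$.

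This last step is the main obstacle, and it is where the hypotheses on $S$ and the generalized Hellus isomorphism are used. Let $Z\subseteq\Spec(S)$ be the locus where $S$ is not good enough: the points at which two distinct minimal primes of $S$ meet in case (ii) --- empty once the disjointness hypothesis is used to reduce to $\Spec(S)$ irreducible, after splitting local cohomology over the corresponding product decomposition of $S$ --- and the union of the non-normal and non-(locally almost factorial) loci in case (iii). Over $\Spec(S)\setminus Z$ the cokernel $C$ (and likewise $\text{image}(\alpha)$) agrees, up to modules with finite $\text{Ass}$ inherited from $R$, with the analogous cokernel for a ring to which the $i\le 2$, respectively $i\le 3$, finiteness analysis already applies, so only finitely many associated primes arise there. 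The contribution of $Z$ is controlled by the generalized Hellus isomorphism: because $Z$ is cut out locally by few enough equations that the relevant low-degree local cohomology of $R$-modules vanishes, the $Z$-supported part of $C$ is identified with a subquotient of a local cohomology module of $R$ --- not of $S$ --- and hence has finite $\text{Ass}$ because $R$ is LC-finite. Combining the two contributions yields finiteness of $\text{Ass}\,\text{image}(\alpha)$ unconditionally and of $\text{Ass}\,C$ whenever $\text{Ass}\,H^{i-1}_I(S)$ is finite, which closes both directions of the equivalences. The escalation from ``disjoint components'' at $i=3$ to ``normal and locally almost factorial'' at $i=4$ reflects exactly that the nice-locus part of the argument must invoke the $H^2$, respectively $H^3$, finiteness behavior of $S$, and reaching $H^3$ requires Theorem \ref{H2}-type input, hence the factoriality condition.
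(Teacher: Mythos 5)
Your part (i) and your initial reduction for (ii)--(iii) are fine: setting $C=\Coker{\alpha}$ for $\alpha:H^{i-1}_I(R)\to H^{i-1}_I(S)$, the long exact sequence does show $\Ass{H^i_I(J)}$ is finite iff $\Ass{C}$ is finite. But from that point on there is a genuine gap, and it sits exactly where the real work of the theorem lies. You need two facts you never establish: that $\Ass{\Img{\alpha}}$ is finite (a quotient of a module with finite $\Ass$ can have infinitely many associated primes, so this does not follow from LC-finiteness of $R$), and, much more seriously, that finiteness of $\Ass{H^{i-1}_I(S)}$ passes to its quotient $C$. Your ``bad locus $Z$'' argument does not deliver this. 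In case (ii) the disjoint-components hypothesis is global, so your $Z$ is empty, and yet the conclusion is only an equivalence with $\Ass{H^2_I(S)}$ --- which can genuinely be infinite (Katzman's example) --- so the claim that ``only finitely many associated primes arise'' over the good locus cannot be what is happening; the hypothesis on $S$ is not there to shrink a bad locus but to produce a specific ideal decomposition. Nothing in your sketch explains why passing from $H^{i-1}_I(S)$ to its quotient $C$ cannot create infinitely many new associated primes.

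The paper closes this gap by proving that $\alpha=0$, so that $C=H^{i-1}_I(S)$ on the nose. The mechanism is: first use the generalized Hellus isomorphism (Corollary \ref{reduction}) to enlarge $I$ so that $\text{depth}_I(R)=i-1$ without changing $H^i_I(J)$ or $H^{i-1}_I(S)$; then use the hypothesis on $S$ (idempotents in (ii), Corollary \ref{localprinc} on a finite affine cover in (iii)) to write $IS=yS\cap I_0$ up to radicals with $\text{depth}_{I_0}(S)>a$; then observe via Corollary \ref{depth1coh} that $H^{i-1}_I(S)\simeq H^{i-1}_{I_0}(S)_y$ is an $S_y$-module, so $\alpha$ factors through $S_y\otimes_R H^{i-1}_I(R)$, which vanishes because inverting $y$ turns $I$ into an ideal of depth $>i-1$ on $R$ (Lemma \ref{prinq3}). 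Your proposal invokes the Hellus isomorphism only for a vague ``$Z$-supported part'' and never uses the decomposition $IS=yS\cap I_0$, so the central vanishing $\alpha=0$ --- and with it both directions of the equivalence --- is missing.
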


A key aspect of Hochster and Núñez-Betancourt's strategy in codimension $1$ was using the finiteness of $\text{Ass}\, H^i_I(J)$ to control $\text{Min}\, H^{i-1}_I(S)$, even in a situation where $\text{Ass}\, H^{i-1}_I(S)$ may be infinite. However, the ``only if'' statements in Theorem \ref{smalli} imply that this strategy cannot be straightforwardly generalized to the higher codimension setting. Namely, if $\text{Ass}\, H^{i-1}_I(S)$ is infinite, then the same may necessarily also true of $\text{Ass}\, H^{i}_I(J)$.

The proof of Theorem \ref{smalli} relies heavily on the following isomorphism of functors, which may be of some independent interest. This isomorphism is a generalization of a result of Hellus \cite[Theorem 3]{hellus}.
\begin{thm}[\ref{genhel}]
Let $R$ be a Noetherian ring and let $I\subseteq R$ be any ideal. Fix $i\geq 0$. There is an ideal $I^\prime\supseteq I$ (resp. $I^{\prime\prime}\supseteq I$) such that
    \begin{itemize}
    \item There is a natural isomorphism $H^i_{I^\prime}(-) \xrightarrow{\sim} H^i_{I}(-)$ (resp. there is a natural surjection $H^i_{I^{\prime\prime}}(-) \twoheadrightarrow H^i_{I}(-)$)
    \item $\text{ht}(I^\prime)\geq i-1$ (resp. $\text{ht}(I^{\prime\prime})\geq i$)
    \end{itemize}
\end{thm}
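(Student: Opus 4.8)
The plan is to realize $H^i_I(-)$, up to natural isomorphism, as the local cohomology of a strictly smaller (hence higher‑codimensional) closed subscheme, obtained by repeatedly cutting the low‑codimensional components of $V(I)$ with a single generic element. First I would reduce to $I$ radical, since $H^i_I$ depends only on $\sqrt I$; write $I=\mathfrak p_1\cap\cdots\cap\mathfrak p_n$. If $\operatorname{ht}(I)\geq i-1$ (resp. $\geq i$) there is nothing to do: take $I'=I$; and if $i>\operatorname{cd}_R(I)$ — in particular if $i>\dim R$ — then $H^i_I(-)\equiv 0$ and $I'=R$ works (with $\operatorname{ht}(R)=+\infty$). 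So I may assume $h:=\operatorname{ht}(I)<i-1$ (resp. $<i$) and $i\leq\operatorname{cd}_R(I)$, and it suffices to produce an ideal $\widetilde I\supsetneq I$ with $\operatorname{ht}(\widetilde I)\geq h+1$ together with a natural isomorphism (resp. surjection) $H^i_{\widetilde I}(-)\to H^i_I(-)$, namely the one induced by the inclusion of torsion functors $\Gamma_{\widetilde I}\subseteq\Gamma_I$; iterating the passage $I\rightsquigarrow\widetilde I$ (re‑radicalising each time) raises the height past $i-1$ (resp. $i$) after finitely many steps.

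To build $\widetilde I$, let $\Lambda_0=\{\mathfrak p\in\operatorname{Min}(I):\operatorname{ht}\mathfrak p=h\}$ and $\Lambda_1=\operatorname{Min}(I)\setminus\Lambda_0$. By prime avoidance (no prime of $\Lambda_0$ can contain a prime of $\Lambda_1$, since their heights compare the wrong way) choose $g$ lying in every prime of $\Lambda_1$ but in no prime of $\Lambda_0$, and set $\widetilde I:=I+(g)$. Then $V(\widetilde I)=V(I)\cap V(g)$ keeps each component $V(\mathfrak q)$, $\mathfrak q\in\Lambda_1$, intact while cutting each $V(\mathfrak p)$, $\mathfrak p\in\Lambda_0$, down to codimension $\geq h+1$, so $\operatorname{ht}(\widetilde I)\geq h+1$. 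Since $g\in\widetilde I$ we have $D(g)\subseteq\operatorname{Spec}R\setminus V(\widetilde I)$ and $V(I)\setminus V(\widetilde I)=V(I)\cap D(g)$, which is already closed in $D(g)$; hence the excision (change‑of‑support) long exact sequence for $V(\widetilde I)\subseteq V(I)$ reads, naturally in the $R$‑module $M$,
\[ \cdots\longrightarrow H^{i-1}_{IR_g}(M_g)\longrightarrow H^i_{\widetilde I}(M)\longrightarrow H^i_I(M)\longrightarrow H^i_{IR_g}(M_g)\longrightarrow\cdots. \]
Thus everything reduces to choosing $g$ carefully enough that $\operatorname{cd}_{R_g}(IR_g)\leq i-2$ (resp. $\leq i-1$): then the two flanking terms (resp. the right‑hand term) vanish for every $M$, and the middle arrow is the asserted natural isomorphism (resp. surjection).

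The hard part is exactly this cohomological‑dimension bound. The ideal $IR_g$ has height $h\leq i-2$, but cohomological dimension can in general exceed height, so the estimate cannot be read off from the height alone — this is where the real work lies. Here is how I would attack it. At the generic points of $V(IR_g)$ — the localisations at the primes of $\Lambda_0$ — the ideal becomes primary to the maximal ideal, so its local cohomological dimension there is exactly $h\leq i-2$; and at any prime $\mathfrak q$ properly containing a minimal prime of $I$, the ideal $IR_\mathfrak q$ fails to be $\mathfrak q$‑primary (that minimal prime survives beneath $\mathfrak q$), so the Lichtenbaum–Hartshorne vanishing theorem gives $\operatorname{cd}_{R_\mathfrak q}(IR_\mathfrak q)\leq\operatorname{ht}(\mathfrak q)-1$ (modulo the usual care with minimal primes of the completion). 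Combining these, the "bad locus" $B=\{\mathfrak q\in V(I):\operatorname{cd}_{R_\mathfrak q}(IR_\mathfrak q)\geq i-1\}$ is disjoint from an open neighbourhood of every $\mathfrak p\in\Lambda_0$; the technical heart of the argument is to upgrade this to the statement that $B$ is contained in a proper closed subset of $V(I)$ missing all of $\Lambda_0$ — this I expect to handle by a Noetherian induction that successively localises $\operatorname{Spec}R$ away from such bad loci, using the inequality $\operatorname{cd}_{R_\mathfrak q}\leq\operatorname{ht}(\mathfrak q)-1$ to guarantee properness, together with the fact that clearing the remaining (irrelevant) components of $\operatorname{Spec}R$ kills $\operatorname{cd}$ outright in the boundary case $h=0$. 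Re‑choosing $g$ to vanish on this closed set, while retaining $g\in\bigcap_{\Lambda_1}\mathfrak q$ and $g\notin\bigcup_{\Lambda_0}\mathfrak p$, forces $D(g)\cap B=\emptyset$, hence $\operatorname{cd}_{R_g}(IR_g)\leq i-2$, and the one‑step claim follows; the surjection statement is identical with $i-1$ in place of $i-2$ throughout. Specialising to $R$ local and $i$ the top cohomological degree recovers the isomorphism of \cite[Theorem 3]{hellus}; the genuinely new ingredient is the height bound, which is precisely what makes the $H^i_I$‑versus‑$H^{i-1}_I$ finiteness comparisons of Theorem \ref{smalli} go through.
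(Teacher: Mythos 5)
Your reduction is sound as far as it goes: adjoining one carefully chosen element $g$ at a time, raising the height by one per step, and invoking the long exact sequence $\cdots\to H^{i-1}_{IR_g}(M_g)\to H^i_{I+(g)}(M)\to H^i_I(M)\to H^i_{IR_g}(M_g)\to\cdots$ is exactly the shape of the paper's argument (there phrased as a Mayer--Vietoris sequence for $yR\cap I$ and $yR+I$). The gap is that the entire content of the theorem lives in the step you defer: producing $g$, outside every height-$h$ minimal prime of $I$, with $\operatorname{cd}_{R_g}(IR_g)\leq i-2$. Your proposed route to this bound does not work. Lichtenbaum--Hartshorne only yields $\operatorname{cd}_{R_{\mathfrak q}}(IR_{\mathfrak q})\leq\operatorname{ht}(\mathfrak q)-1$, which is vacuous at primes of height $\geq i$, so the ``bad locus'' $B=\{\mathfrak q:\operatorname{cd}_{R_{\mathfrak q}}(IR_{\mathfrak q})\geq i-1\}=\bigcup_{j\geq i-1}\operatorname{Supp} H^j_I(R)$ is not controlled by it. To kill $B$ with a single $g$ you need $B\subseteq V(g)$, i.e.\ you need the \emph{closure} of $B$ to miss the height-$h$ minimal primes of $I$; but $B$ is a union of supports of local cohomology modules, whose closedness is precisely the open problem motivating the paper, and a set of primes each of height $\geq i-1$ can perfectly well have closure containing a prime of height $h$ (e.g.\ $\bigcap_{\lambda}(x,y,z-\lambda w)=(x,y)$). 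The only containment of $\operatorname{Supp} H^j_I(-)$ in a closed set of height $\geq j$ available in the paper is Corollary \ref{heightcontrol}, which is deduced \emph{from} the theorem you are proving, so your Noetherian induction would be circular as sketched.

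What the paper does instead is choose $g$ so as to bound the \emph{arithmetic rank}, not merely the cohomological dimension, of the relevant ideal -- and it does so by an elementary device that makes no reference to local cohomology at all. Extend a generating set of $J$ to elements $x_1,\dots,x_h\in I$ generating an ideal of height $h$; let $P_1,\dots,P_t$ be the minimal primes of $(x_1,\dots,x_h)$ containing $I$ and $Q_1,\dots,Q_s$ the rest; choose $y\in Q_1\cap\cdots\cap Q_s$ avoiding $P_1\cup\cdots\cup P_t$. Then $\sqrt{yR\cap I}=\sqrt{(yx_1,\dots,yx_h)}$, so $\operatorname{ara}_R(yR\cap I)\leq h$ (equivalently, $\sqrt{IR_y}=\sqrt{(x_1,\dots,x_h)R_y}$, giving the bound $\operatorname{cd}_{R_y}(IR_y)\leq h\leq i-2$ that you need), while $\operatorname{ht}(yR+I)\geq h+1$. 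Note that your $g$ is selected using only the minimal primes of $I$ itself, which is not enough: the crucial point is to select it relative to the minimal primes of the parameter ideal $(x_1,\dots,x_h)\subseteq I$. The Mayer--Vietoris vanishing then follows from the \v{C}ech complex having length $\leq h$, with no appeal to Lichtenbaum--Hartshorne or to any semicontinuity of support. You should replace your ``technical heart'' with Lemma \ref{chooseelt} of the paper (or reprove it); the rest of your outline then goes through.
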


Separate to our main application, Theorem \ref{genhel} yields a new proof of a result of Marley \cite[Proposition 2.3]{marley} on the set of height $i$ primes in the support of modules of the form $H^i_I(M)$. See Corollary \ref{heightcontrol}. 

Returning to the main objects of study, we observe that in Theorem \ref{smalli}, our control over the associated primes of $H^i_I(J)$ tends to become greater as our hypotheses on $R/J$ become more restrictive. We turn in Section 5 to investigating the extreme case in which $R/J$ is itself regular and LC-finite. At least in characteristic $p>0$, we obtain the following result.

\begin{thm}[\ref{charpj}]
  Let $R$ be a regular ring of prime characteristic $p>0$, let $J\subseteq R$ be an ideal such that $R/J$ is regular, let $\mathcal{M}$ be an $F$-finite $F_R$-module (e.g. $\mathcal{M}=R$), and let $I$ be any ideal of $R$. For all $i\geq 0$, $\text{Ass}\, H^i_I(J\mathcal{M})$ is finite.
\end{thm}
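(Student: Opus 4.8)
The plan is to glue together the finiteness theorems for $F$-finite $F$-modules over the two regular rings $R$ and $S:=R/J$ along the short exact sequence
\[
0\to J\mathcal{M}\to \mathcal{M}\to \mathcal{M}/J\mathcal{M}\to 0 .
\]
Applying $H^\bullet_I(-)$, exactness of the resulting long exact sequence at $H^i_I(J\mathcal{M})$ exhibits $H^i_I(J\mathcal{M})$ as an extension
\[
0\to \operatorname{coker}\!\big(H^{i-1}_I(\mathcal{M})\xrightarrow{\ \pi_\ast\ } H^{i-1}_I(\mathcal{M}/J\mathcal{M})\big)\to H^i_I(J\mathcal{M})\to L\to 0,
\]
with $L$ a submodule of $H^i_I(\mathcal{M})$. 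Since $\text{Ass}$ is well behaved in short exact sequences and passes to submodules, it suffices to bound $\text{Ass}\,H^i_I(\mathcal{M})$ and $\text{Ass}$ of the cokernel of $\pi_\ast$. (The case $i=0$ is immediate, since $H^0_I(J\mathcal{M})\hookrightarrow H^0_I(\mathcal{M})$, so assume $i\geq 1$.)

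The first set is finite by Lyubeznik's theory of $F$-modules \cite{lyufmod}: $R$ is regular of characteristic $p$ and $\mathcal{M}$ is an $F$-finite $F_R$-module, so $H^i_I(\mathcal{M})$ is again an $F$-finite $F_R$-module, and such modules — together with their submodules, e.g.\ $L$ — have only finitely many associated primes. For the cokernel of $\pi_\ast$, the point is that $\mathcal{M}/J\mathcal{M}$ is naturally an $F$-finite $F_S$-module over the \emph{regular} ring $S$: extension of scalars along $q\colon R\twoheadrightarrow S$ commutes with the Frobenius functor, because if a generating morphism of $\mathcal{M}$ is presented over $R$ by a matrix $B$ — so that the corresponding matrix for its Frobenius pullback is $B^{[p]}$ — then reducing mod $J$ yields a generating morphism of $\mathcal{M}/J\mathcal{M}=q^\ast\mathcal{M}$ over $S$, as $\overline{B^{[p]}}=\overline{B}^{\,[p]}$. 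Thus $q^\ast$ carries $F$-finite $F_R$-modules to $F$-finite $F_S$-modules. Combining this with base independence of local cohomology, $H^{i-1}_I(\mathcal{M}/J\mathcal{M})=H^{i-1}_{IS}(q^\ast\mathcal{M})$ is an $F$-finite $F_S$-module, again by \cite{lyufmod} applied over $S$; in particular so is $q^\ast\!\big(H^{i-1}_I(\mathcal{M})\big)=H^{i-1}_I(\mathcal{M})/J\,H^{i-1}_I(\mathcal{M})$.

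It remains to identify the cokernel of $\pi_\ast$ as an $F$-finite $F_S$-module, the class of which forms an abelian category all of whose objects have finite $\text{Ass}$. Because $H^{i-1}_I(\mathcal{M}/J\mathcal{M})$ is killed by $J$, the map $\pi_\ast$ factors through $q^\ast\!\big(H^{i-1}_I(\mathcal{M})\big)$, and the induced map
\[
q^\ast\!\big(H^{i-1}_I(\mathcal{M})\big)\longrightarrow H^{i-1}_{IS}(q^\ast\mathcal{M})
\]
is the natural base-change morphism for local cohomology along $q$. The one genuine verification — and the step I expect to be the main obstacle — is that this base-change morphism is a morphism of $F_S$-modules, i.e.\ that it intertwines the Frobenius structure induced on its source by that of $\mathcal{M}$ with the one induced on its target by that of $\mathcal{M}/J\mathcal{M}$; this amounts to chasing the (functorial, \v{C}ech-level) construction of these structures through the canonical isomorphism $q^\ast F^\ast_R\cong F^\ast_S q^\ast$. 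Granting it, the cokernel of $\pi_\ast$ is the cokernel of a morphism of $F$-finite $F_S$-modules, hence has finitely many associated primes, and combining the two bounds shows $\text{Ass}\,H^i_I(J\mathcal{M})$ is finite. The argument is uniform in $\mathcal{M}$; the case $\mathcal{M}=R$, where $\mathcal{M}/J\mathcal{M}=S$, is entirely typical.
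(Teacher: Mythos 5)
Your proposal follows essentially the same route as the paper: the long exact sequence from $0\to J\mathcal{M}\to\mathcal{M}\to\mathcal{M}/J\mathcal{M}\to 0$ reduces everything to the finiteness of $\text{Ass}$ for a submodule of $H^i_I(\mathcal{M})$ (handled by Lyubeznik over $R$) and for $\operatorname{coker}(\pi_\ast)$, which you realize as the cokernel of a morphism of $F$-finite $F_{R/J}$-modules after factoring $\pi_\ast$ through $H^{i-1}_I(\mathcal{M})/J\,H^{i-1}_I(\mathcal{M})$. This is exactly the paper's argument. The one step you explicitly ``grant'' --- that the base-change map $H^{i-1}_I(\mathcal{M})/J H^{i-1}_I(\mathcal{M})\to H^{i-1}_I(\mathcal{M}/J\mathcal{M})$ intertwines the two Frobenius structures --- is precisely where the paper concentrates its effort: it is Theorem \ref{morph} (specialized in Corollary \ref{quot}), proved via the compatibility of the comparison maps $h^i_{S/R}$ with composition of ring homomorphisms and with simultaneous flat base change along Frobenius (Propositions \ref{comp} and \ref{simflat}, Corollary \ref{frob}). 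Your proposed verification (``chasing the \v{C}ech-level construction through $q^\ast F_R^\ast\cong F_S^\ast q^\ast$'') is the right diagram chase, but as written it is deferred rather than carried out, so to be complete your proof would need to supply that argument; note also that the paper proves the compatibility for an arbitrary homomorphism of regular rings $R\to S$, not only for the surjection $R\twoheadrightarrow R/J$, though only the surjective case is needed here.
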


\noindent\textit{Convention:} Throughout this paper, we assume that all given rings are Noetherian, unless stated otherwise.

\section{Finiteness of $\text{Ass}\, H^2_I(J)$}

\noindent If we are focused exclusively on cohomological degree $2$, many of the hypotheses of our basic setting can be relaxed. $R$ need only be normal with a condition somewhat weaker than local factoriality, and the ideal $J\subseteq R$ can be completely arbitrary -- we do not need to $J$ to be generated by a regular sequence. Our goal is to show that $\text{Ass}\, H^2_I(J)$ is finite for any ideal $I\subseteq R$. The main case is when $\text{depth}_I(R)=1$.

\begin{lem}\label{depth1}
  Let $R$ be a Noetherian domain, and let $J\subseteq R$ be an ideal. If $I\subseteq R$ is an ideal such that $\text{depth}_I(R)\neq 1$, then $\text{Ass}\, H^2_I(J)$ is finite.
\end{lem}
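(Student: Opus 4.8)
The plan is to reduce the statement, through two long exact sequences in local cohomology, to the known finiteness of the associated primes of first local cohomology modules. First I would dispose of the case $I=0$, where $H^2_I(J)=0$, and so assume $I\neq 0$. Since $R$ is a domain, every nonzero $f\in I$ is a nonzerodivisor, hence $\Gamma_I(R)=0$ and $\text{depth}_I(R)\geq 1$; together with the hypothesis $\text{depth}_I(R)\neq 1$ this forces $\text{depth}_I(R)\geq 2$, i.e.\ $H^0_I(R)=H^1_I(R)=0$. I would then fix a nonzero $f\in I$ and record the elementary fact that if $N$ is an $R$-module in which every element is killed by a power of $f$ --- for instance any module $H^k_I(N')$, since $f\in I$ --- then $\text{Ass}\,N=\text{Ass}\,(0:_N f)$. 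The nonobvious inclusion holds because, given $\mathfrak p=\mathrm{Ann}_R(x)\in\text{Ass}\,N$ and $n\geq 1$ minimal with $f^n x=0$, the element $f^{n-1}x$ lies in $0:_N f$ and has annihilator $\mathfrak p$ (when $f\notin\mathfrak p$ one cancels the factor $f^{n-1}$ using primeness of $\mathfrak p$, while $f\in\mathfrak p$ forces $n=1$).

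With these in hand, the argument runs as follows. Applying local cohomology to $0\to R\xrightarrow{f}R\to R/fR\to 0$ and using $H^1_I(R)=0$ gives $H^1_I(R/fR)\cong(0:_{H^2_I(R)}f)$, so by the torsion fact $\text{Ass}\,H^2_I(R)=\text{Ass}\,H^1_I(R/fR)$. Next, applying local cohomology to $0\to J\to R\to R/J\to 0$ and again using $H^1_I(R)=0$ produces an exact sequence $0\to H^1_I(R/J)\to H^2_I(J)\to H^2_I(R)$, whence $\text{Ass}\,H^2_I(J)\subseteq\text{Ass}\,H^1_I(R/J)\cup\text{Ass}\,H^2_I(R)=\text{Ass}\,H^1_I(R/J)\cup\text{Ass}\,H^1_I(R/fR)$. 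Both $R/J$ and $R/fR$ are finitely generated $R$-modules, so it remains only to invoke the finiteness of the set of associated primes of the first local cohomology module of a finitely generated module.

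That last fact is the one genuine input, and the step I would flag as the heart of the matter --- it is essentially what makes the case $i=2$ work, as opposed to the formal long-exact-sequence bookkeeping, which is routine. It is the Brodmann--Lashgari Faghani finiteness theorem; if one prefers to keep the proof self-contained, it can be re-derived by the same torsion device: after replacing $M$ by $M/\Gamma_I(M)$ one may assume $\Gamma_I(M)=0$, and then either $H^1_I(M)=0$ (when $M=0$ or $IM=M$) or $I$ contains an $M$-nonzerodivisor $f$, in which case $0\to M\xrightarrow{f}M\to M/fM\to 0$ identifies $0:_{H^1_I(M)}f$ with $\Gamma_I(M/fM)\subseteq M/fM$; since $H^1_I(M)$ is $f$-power torsion, $\text{Ass}\,H^1_I(M)=\text{Ass}\,\Gamma_I(M/fM)$ is finite. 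No single step beyond this input is an obstacle.
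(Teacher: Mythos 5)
Your argument is correct. It shares the paper's skeleton --- reduce to $\text{depth}_I(R)\geq 2$ using that $R$ is a domain, then run the long exact sequence of $0\to J\to R\to R/J\to 0$ --- but it diverges at the key citation. The paper notes that $H^1_I(J)\simeq\Gamma_I(R/J)$ is finitely generated, so that $H^2_I(J)$ is the first possibly-non-finitely-generated local cohomology module of the finitely generated module $J$, and then invokes the Brodmann--Lashgari Faghani theorem at cohomological degree $2$. You instead bound $\text{Ass}\,H^2_I(J)\subseteq\text{Ass}\,H^1_I(R/J)\cup\text{Ass}\,H^2_I(R)$ and dispose of the second set via the hypersurface section $0\to R\xrightarrow{f}R\to R/fR\to 0$ together with the $f$-torsion identity $\text{Ass}\,N=\text{Ass}\,(0:_N f)$, so that only the degree-one finiteness statement (which, as you note, is elementary) is ever needed, and it is applied only to cyclic modules rather than to $J$ itself. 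Your route is thus more self-contained --- in effect you unroll one step of the Brodmann--Lashgari induction by hand --- at the cost of one extra auxiliary sequence; the paper's version is shorter but leans on the full strength of the cited theorem. (One minor imprecision in your optional re-derivation at the end: after reducing to $\Gamma_I(M)=0$ with $M\neq 0$, the reason $I$ contains an $M$-regular element is prime avoidance applied to $\text{Ass}\,M$; the case split involving $IM=M$ is not the right dichotomy, though nothing downstream depends on it.)
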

\begin{proof}
If $I= (0)$ or $I=R$, there is nothing to do, so we assume that $I$ is a nonzero proper ideal. Since $R$ is a domain, this implies that both $J$ and $R$ are $I$-torsionfree, giving $\text{depth}_I(R)>0$ and by hypothesis $\text{depth}_I(R)\neq 1$, so we have $\text{depth}_I(R)\geq 2$. The following sequence is exact.
\begin{center}{
\begin{tikzpicture}[descr/.style={fill=white,inner sep=1.5pt}]
        \matrix (m) [
            matrix of math nodes,
            row sep=1em,
            column sep=2.5em,
            text height=1.5ex, text depth=0.25ex
        ]
        { 0 & 0 & 0 & \Gamma_I(R/J) \\
            & H^1_I(J) & 0 & H^1_I(R/J) \\
          & H^{2}_I(J) & H^{2}_I(R) & H^{2}_I(R/J) \\
        };

        \path[overlay,->, font=\scriptsize,>=latex]
        (m-1-1) edge (m-1-2)
        (m-1-2) edge (m-1-3)
        (m-1-3) edge (m-1-4)
        (m-1-4) edge[out=345,in=165] (m-2-2)
        (m-2-2) edge (m-2-3)
        (m-2-3) edge (m-2-4)
        (m-2-4) edge[out=345,in=165] (m-3-2)
        (m-3-2) edge (m-3-3)
        (m-3-3) edge (m-3-4)
;
\end{tikzpicture}}
\end{center}
Note that $H^1_I(J) \simeq \Gamma_I(R/J)$ is finitely generated, meaning that $H^2_I(J)$ is either finitely generated or the first non-finitely-generated local cohomology module of $J$ on $I$, and the stated result follows at once from Brodmann and Lashgari Faghani \cite[Theorem 2.2]{brodlash}.
\end{proof}

\begin{lem}\label{facdecomp}
Let $R$ be a Noetherian normal ring, and $I\subseteq R$ be an ideal such that $\text{depth}_I(R)=1$. Then $\sqrt{I}=L\cap I_0$ for some ideal $L$ given by the intersection of height one primes, and some ideal $I_0\subseteq R$ with $\text{depth}_{I_0}(R)\geq 2$.
\end{lem}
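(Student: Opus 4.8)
The plan is to decompose $\sqrt{I}$ via primary decomposition (or rather, via its minimal primes), separating the height-one primes from the rest. Since $R$ is Noetherian and normal, it is in particular reduced, so $\sqrt{I}$ is a finite intersection of primes $\mathfrak{p}_1\cap\cdots\cap\mathfrak{p}_n$, its minimal primes. Partition these into those of height one and those of height $\geq 2$ (note that since $\depth_I(R)=1$ and $R$ is normal, hence satisfies Serre's condition $S_2$, there must be at least one minimal prime of height one over $I$: if all minimal primes had height $\geq 2$, then a prime avoidance / grade argument using $S_2$ would force $\depth_I(R)\geq 2$). Set $L=\bigcap_{\Ht\mathfrak{p}_k=1}\mathfrak{p}_k$ and $I_0=\bigcap_{\Ht\mathfrak{p}_k\geq 2}\mathfrak{p}_k$, with the convention $I_0=R$ if there are no height $\geq 2$ minimal primes.

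**Next I would** verify the two asserted properties. The equality $\sqrt{I}=L\cap I_0$ is immediate from the decomposition into minimal primes. For the depth statement, $I_0$ is either $R$ (depth $=\infty$) or an intersection of primes each of height $\geq 2$; in the latter case $\Ht(I_0)\geq 2$, and since $R$ is $S_2$ (being normal Noetherian — here one uses that normal implies $R_1+S_2$, so in particular $S_2$), grade equals height for such ideals on $R$, giving $\depth_{I_0}(R)=\Ht(I_0)\geq 2$. Concretely, the point is that $\depth_{I_0}(R)\geq \min\{2, \Ht(I_0)\}$ always by general depth-sensitivity, and once $\Ht(I_0)\geq 2$ this lower bound is exactly $2$. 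Replacing $I$ by $\sqrt{I}$ is harmless since local cohomology depends only on the radical.

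**The main subtlety** is making sure $L$ is genuinely an intersection of \emph{height one} primes and that the height-$\geq 2$ part really does have depth $\geq 2$ rather than merely height $\geq 2$ — this is where normality (specifically the $S_2$ condition) is doing essential work, since for a general Noetherian ring one could have $\Ht(I_0)\geq 2$ but $\depth_{I_0}(R)=1$. I would state explicitly that normal Noetherian rings satisfy $S_2$ and cite this as the input that upgrades the height bound to a depth bound. A secondary point to address is the degenerate case: if $I$ itself is already height $\geq 2$ (i.e. all minimal primes have height $\geq 2$), this would contradict $\depth_I(R)=1$ on a normal ring, so that case does not arise; conversely if every minimal prime has height one, take $I_0=R$. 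These boundary cases should be dispatched in a sentence each. Overall this is a short argument once the right structural facts about normal rings are invoked; the only place one could stumble is forgetting that $S_2$ is precisely what is needed.
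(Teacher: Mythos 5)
Your argument is correct and is essentially the paper's proof: decompose $\sqrt{I}$ into its minimal primes, split them by height into $L$ and $I_0$, and use that a Noetherian normal ring satisfies $S_2$, so $\text{depth}_{I_0}(R)\geq\min\{2,\text{ht}(I_0)\}\geq 2$. One phrasing quibble: ``grade equals height'' can fail for non--Cohen--Macaulay normal rings, but you only need the inequality $\text{depth}_{I_0}(R)\geq\min\{2,\text{ht}(I_0)\}$, which you state correctly immediately afterwards.
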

\begin{proof}
  In a Noetherian normal ring $R$, $\text{ht}(I_0)\geq 2$ implies $\text{depth}_{I_0}(R)\geq 2$.
\end{proof}

Recall the notion of an almost factorial ring. 
\begin{defn}\label{laf}
  A normal domain $R$ is called \textit{almost factorial} if the class group of $R$
  is torsion. A normal ring $R$ is called \textit{locally almost factorial} if $R_P$ is almost factorial for all $P\in\text{Spec}(R)$.
\end{defn}
A regular ring, for example, is locally (almost) factorial. Hellus shows that an almost factorial Cohen-Macaulay local ring of dimension at most four is LC-finite \cite[Theorem 5]{hellus}. Our application of the almost factorial hypothesis is motivated by the application in \cite{hellus}.

 The main property we require is that every height $1$ prime of an almost factorial ring is principal up to taking radicals. Recall that an ideal $I$ is said to have \text{pure} height $h$ if every minimal prime of $I$ has height exactly $h$. An ideal of pure height $1$ is (up to radical) a product of height $1$ primes, so in an almost factorial ring, any pure height $1$ ideal is principal up to radicals.

\begin{lem}
Let $R$ be a locally almost factorial Noetherian normal ring, and $L$ be an ideal of pure height $1$. Then there is a finite cover of $\text{Spec}(R)$ by open charts $\text{Spec}(R_{f_1}),\cdots,\text{Spec}(R_{f_t})$ such that for each $i$, $LR_{f_i}$ has the same radical as a principal ideal.
\end{lem}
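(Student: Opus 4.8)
The plan is to check the desired property pointwise on $\text{Spec}(R)$, record a witnessing element $f_P$ at each prime $P$, and then invoke quasi-compactness of $\text{Spec}(R)$ to pass to a finite subcover. So the first step is: fix $P\in\text{Spec}(R)$ and produce $b_P\in R$ with $\sqrt{LR_P}=\sqrt{b_PR_P}$. If $L\not\subseteq P$, take $b_P=1$. Otherwise $LR_P$ is an ideal of pure height $1$ in the normal domain $R_P$: its minimal primes are the $QR_P$ with $Q$ a minimal prime of $L$ contained in $P$, and each such $Q$ has height $1$ because $L$ has pure height $1$ (heights are unchanged under localization). Since $R_P$ is almost factorial, the discussion immediately preceding the statement shows $\sqrt{LR_P}$ is the radical of a principal ideal $aR_P$ for some $a\in R_P$. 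Writing $a=b_P/t$ with $b_P\in R$ and $t\in R\setminus P$, and using that $t$ is a unit in $R_P$, one gets $\sqrt{LR_P}=\sqrt{b_PR_P}$ with $b_P\in R$ as desired.

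The second step is to spread this equality out to a basic open neighborhood of $P$. Write $L=(\ell_1,\dots,\ell_n)$, which is possible since $R$ is Noetherian. The containment $LR_P\subseteq\sqrt{b_PR_P}$ says $\ell_k^{N}\in b_PR_P$ for all $k$ and some fixed $N\gg 0$, and $b_P\in\sqrt{LR_P}$ says $b_P^{M}\in LR_P$ for some $M$. Each of these finitely many membership relations already holds after inverting a single element of $R\setminus P$; let $f_P\in R\setminus P$ be the product of all of these elements. Then in $R_{f_P}$ we have both $LR_{f_P}\subseteq\sqrt{b_PR_{f_P}}$ and $b_PR_{f_P}\subseteq\sqrt{LR_{f_P}}$, hence $\sqrt{LR_{f_P}}=\sqrt{b_PR_{f_P}}$; that is, $LR_{f_P}$ has the same radical as the principal ideal $b_PR_{f_P}$.

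Finally, the open sets $\text{Spec}(R_{f_P})$ for $P\in\text{Spec}(R)$ cover $\text{Spec}(R)$, since $P\in\text{Spec}(R_{f_P})$ for every $P$. Because $\text{Spec}(R)$ is quasi-compact, finitely many of them, say $\text{Spec}(R_{f_{P_1}}),\dots,\text{Spec}(R_{f_{P_t}})$, already cover; setting $f_i=f_{P_i}$ completes the argument.

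I do not anticipate a genuine obstacle here: the substantive input is the pointwise fact that a pure height $1$ ideal in an almost factorial ring is principal up to radical, and that is exactly what is recorded just before the statement. The only mildly delicate point is the two rounds of clearing denominators — first replacing $a\in R_P$ by an honest element $b_P\in R$, then converting the finitely many radical-membership relations valid in $R_P$ into relations valid in one common localization $R_{f_P}$ — but since only finitely many elements and relations are ever involved, this is routine. I would be mildly careful to state the pure-height-$1$ claim for $LR_P$ correctly (including the degenerate case $L\not\subseteq P$, handled by $b_P=1$), and otherwise the proof is a standard spreading-out plus quasi-compactness argument.
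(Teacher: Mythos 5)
Your proof is correct and follows essentially the same route as the paper's: use the almost factorial hypothesis to make $L$ principal up to radical in $R_P$, clear denominators and spread the two radical containments out to a basic open neighborhood $\mathrm{Spec}(R_{f_P})$, then conclude by quasi-compactness. The only cosmetic difference is that the paper first replaces $L$ by $\sqrt{L}$ and phrases the spreading-out via $uy\in L$ and $vL^n\subseteq yR$ rather than via membership relations for a finite generating set, which amounts to the same thing.
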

\begin{proof}
  We do no harm in replacing $L$ with $\sqrt{L}$, so assume $L$ is radical. Consider a single point $P\in\text{Spec}(R)$. Since $R_P$ is almost factorial, we can write $LR_P = \sqrt{y R_P}$ for some $y\in R_P$. Up to multiplying by units of $R_P$, we may assume that $y$ is an element of $R$. Since
  $y\in LR_P\cap R$, there is some $u\in R-P$ such that $uy\in L$. Also, since $R$ is Noetherian,
  there is some $n>0$ such that $L^n R_P\subseteq yR_P$, hence $L^n\subseteq yR_P\cap R$, and there
  is some $v\in R-P$ such that $vL^n\subseteq yR$. If $f=uv$, then we see that
  $y\in LR_f$ and $L^n\subseteq yR_f$, giving $LR_f =\sqrt{yR_f}$.

  Our choice of $f$ depends on $P$. Varying over all $P\in\text{Spec}(R)$, we obtain a collection of open charts $\{\text{Spec}(R_{f_P})\}_{P\in\text{Spec}(R)}$ which cover $\text{Spec}(R)$ such that (the expansion of) $L$ is principal up to radicals on each chart. Since $\text{Spec}(R)$ is quasicompact, finitely many of these charts cover the whole space.
\end{proof}

\begin{corol}\label{localprinc}
  Let $R$ be a locally almost factorial Noetherian normal ring, and $I\subseteq R$ be an ideal such that $\text{depth}_I(R)=1$. Then there is an ideal $I_0\subseteq R$ with $\text{depth}_{I_0}(R)\geq 2$, and a finite cover of $\text{Spec}(R)$ by open charts $\text{Spec}(R_{f_1}),\cdots,\text{Spec}(R_{f_t})$ such that for each $i$,  $\sqrt{IR_{f_i}}=\sqrt{y_iR_{f_i}}\cap I_0$ for some $y_i\in R$.
\end{corol}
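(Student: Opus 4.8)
The plan is to localize the decomposition produced by Lemma~\ref{facdecomp} using the preceding lemma. First, by Lemma~\ref{facdecomp} I would write $\sqrt{I}=L\cap I_0$ with $L$ an intersection of height one primes and $\text{depth}_{I_0}(R)\geq 2$. I would note that $L\neq R$: otherwise $\sqrt{I}=I_0$, and since $H^i_I(R)=H^i_{\sqrt{I}}(R)$ this would force $\text{depth}_I(R)=\text{depth}_{I_0}(R)\geq 2$, contrary to the hypothesis $\text{depth}_I(R)=1$. Hence $L$ is a genuine ideal of pure height $1$. Replacing $I_0$ by $\sqrt{I_0}$ (which alters neither $\text{depth}_{I_0}(R)$ nor $\sqrt{I}$), I may assume $I_0$ is radical; this is the ideal asserted to exist, and it is fixed once and for all, independent of any chart.

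Next I would apply the preceding lemma to the pure height one ideal $L$, obtaining a finite cover $\text{Spec}(R_{f_1}),\dots,\text{Spec}(R_{f_t})$ of $\text{Spec}(R)$ together with elements $y_i$ such that $\sqrt{LR_{f_i}}=\sqrt{y_iR_{f_i}}$ on each chart. Since the proof of that lemma only ever multiplies the relevant element by units of $R_{f_i}$, I may take each $y_i\in R$.

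Finally, intersecting the two pieces of data on each chart and using that both formation of radicals and finite intersections of ideals commute with localization yields
\[
\sqrt{IR_{f_i}}=\sqrt{(L\cap I_0)R_{f_i}}=\sqrt{LR_{f_i}}\cap\sqrt{I_0R_{f_i}}=\sqrt{y_iR_{f_i}}\cap I_0R_{f_i},
\]
where the last step uses the identification of $\sqrt{LR_{f_i}}$ from the previous paragraph together with the fact that $I_0R_{f_i}$ is radical (it is the expansion of a radical ideal, and a localization of the reduced ring $R/I_0$ is reduced). This is the desired identity, $I_0$ in the statement being read as its expansion to $R_{f_i}$.

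The whole argument is just an assembly of the two lemmas. The only points needing any care — excluding the degenerate case $L=R$ so that the preceding lemma genuinely applies, and clearing units so that $y_i$ can be taken in $R$ rather than merely $R_{f_i}$ — are routine, and the latter is already handled inside the proof of the lemma being cited, so I do not expect a substantive obstacle here.
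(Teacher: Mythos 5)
Your proof is correct and is exactly the argument the paper intends: the corollary is stated without proof precisely because it is the immediate combination of Lemma~\ref{facdecomp} with the preceding lemma on pure height one ideals, which is what you carry out. Your side remarks (excluding $L=R$, fixing $I_0$ independently of the chart, and reading $I_0$ as its expansion $I_0R_{f_i}$) are all sound and fill in the small details the paper leaves implicit.
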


To show that $\text{Ass}\, H^2_I(J)$ is finite, it would certainly suffice to show that $\text{Ass}\, H^2_{IR_{f}}(JR_{f})$ is finite on each chart $\text{Spec}(R_f)$ of a finite cover of $\text{Spec}(R)$. Thus, up to radicals, the main case to understand is when $I$ has the form $yR \cap I_0$, with $y\in R$ and $\text{depth}_{I_0}(R)\geq 2$. This decomposition can be used to get a better understanding of the $I$-transform functor.

For an ideal $\mathfrak{a}\subseteq R$, recall the $\mathfrak{a}$-transform $$D_{\mathfrak{a}}(-) := \varinjlim_{t} \text{Hom}_R(\mathfrak{a}^t,-)$$ is a left exact covariant functor whose right derived functors satisfy $\mathscr{R}^i D_{\mathfrak{a}}(-) \simeq H^{i+1}_{\mathfrak{a}}(-)$. There is a sense in which $D_{\mathfrak{a}}(-)$ forces $\text{depth}_{\mathfrak{a}}(-)\geq 2$ without modifying higher local cohomology on $\mathfrak{a}$. Namely, for any $R$-module $M$, $\Gamma_{\mathfrak{a}}(D_{\mathfrak{a}}(M))=H^1_{\mathfrak{a}}(D_{\mathfrak{a}}(M))=0$, and $H^i_{\mathfrak{a}}(D_{\mathfrak{a}}(M))=H^i_{\mathfrak{a}}(M)$ for all $i\geq 2$. Below, we collect a few fundamental facts about the ideal transform, all of which can be found in the treatment presented in Chapter 2 of Brodmann and Sharp \cite{brodsharp}, along with proofs of the previous assertions.

\textit{Notational remark:} If $F$ and $G$ are functors $\mathfrak{C}\to\mathfrak{D}$, we will write natural
transformations from $F$ to $G$ as $\phi(-):F(-)\to G(-)$, which consists of the data of a map
denoted $\phi(A):F(A)\to G(A)$ for each object $A$ of $\mathfrak{C}$, such that for
any $\mathfrak{C}$-morphism $f:A\to B$, $\phi(B)\circ F(f) = G(f)\circ \phi(A)$.

\begin{lem}\label{four}
  (Brodmann and Sharp \cite[Theorem 2.2.4(i)]{brodsharp}) Let $R$ be a Noetherian ring and fix an ideal ${\mathfrak{a}}\subseteq R$. There is a natural transformation $\eta_{\mathfrak{a}}(-):\text{Id}\to D_{\mathfrak{a}}(-)$ such that, for any $R$-module $M$, there is an exact sequence
    $$
    0\to\Gamma_{\mathfrak{a}}(M)\to M\xrightarrow{\eta_{\mathfrak{a}}(M)} D_{\mathfrak{a}}(M)\to H^1_{\mathfrak{a}}(M)\to 0
    $$
\end{lem}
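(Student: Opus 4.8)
The plan is to produce both the natural transformation and the four-term exact sequence directly from the defining colimit, by applying $\text{Hom}_R(-,M)$ to the presentations $0\to\mathfrak{a}^t\to R\to R/\mathfrak{a}^t\to 0$ and then passing to the limit over $t$. First I would define $\eta_{\mathfrak{a}}$: for each $t\geq 1$ the inclusion $\mathfrak{a}^t\hookrightarrow R$ induces the restriction map $M=\text{Hom}_R(R,M)\to\text{Hom}_R(\mathfrak{a}^t,M)$, and these are compatible with the transition maps $\text{Hom}_R(\mathfrak{a}^t,M)\to\text{Hom}_R(\mathfrak{a}^{t+1},M)$ of the direct system defining $D_{\mathfrak{a}}(M)$; their colimit is $\eta_{\mathfrak{a}}(M):M\to D_{\mathfrak{a}}(M)$. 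Since every map used is induced by a fixed morphism of $R$-modules that does not depend on $M$, the family $\{\eta_{\mathfrak{a}}(M)\}_M$ is a natural transformation $\text{Id}\to D_{\mathfrak{a}}(-)$, and naturality in $M$ will be automatic throughout what follows.

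Next, for fixed $t$, applying the contravariant functor $\text{Hom}_R(-,M)$ to $0\to\mathfrak{a}^t\to R\to R/\mathfrak{a}^t\to 0$ and using $\text{Ext}^1_R(R,M)=0$ gives the exact sequence
$$0\to\text{Hom}_R(R/\mathfrak{a}^t,M)\to M\to\text{Hom}_R(\mathfrak{a}^t,M)\to\text{Ext}^1_R(R/\mathfrak{a}^t,M)\to 0.$$
The surjections $R/\mathfrak{a}^{t+1}\twoheadrightarrow R/\mathfrak{a}^t$ induced by $\mathfrak{a}^{t+1}\subseteq\mathfrak{a}^t$ organize these into a direct system of short exact sequences (the middle maps being the identity on $M$), and since filtered colimits are exact, $\varinjlim_t$ of this system is again exact:
$$0\to\varinjlim_t\text{Hom}_R(R/\mathfrak{a}^t,M)\to M\to\varinjlim_t\text{Hom}_R(\mathfrak{a}^t,M)\to\varinjlim_t\text{Ext}^1_R(R/\mathfrak{a}^t,M)\to 0.$$

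Finally I would identify the three colimits. By definition the middle colimit is $D_{\mathfrak{a}}(M)$, and one checks that under this identification the middle arrow is exactly $\eta_{\mathfrak{a}}(M)$. Since $\text{Hom}_R(R/\mathfrak{a}^t,M)=(0:_M\mathfrak{a}^t)$, the left-hand colimit is the submodule of elements annihilated by a power of $\mathfrak{a}$, that is, $\Gamma_{\mathfrak{a}}(M)$. For the right-hand colimit, recall that $\Gamma_{\mathfrak{a}}(-)=\varinjlim_t\text{Hom}_R(R/\mathfrak{a}^t,-)$ and that cohomology of an injective resolution commutes with filtered colimits, so $\varinjlim_t\text{Ext}^i_R(R/\mathfrak{a}^t,M)\simeq H^i_{\mathfrak{a}}(M)$ for every $i$; in particular $\varinjlim_t\text{Ext}^1_R(R/\mathfrak{a}^t,M)\simeq H^1_{\mathfrak{a}}(M)$. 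Substituting these three identifications yields the stated sequence. The only step that requires attention is the bookkeeping of transition maps — verifying that the four $\text{Hom}$/$\text{Ext}$ families genuinely assemble into one direct system of short exact sequences and that the colimit of the middle morphism is $\eta_{\mathfrak{a}}(M)$ itself rather than some rescaling — but this is routine once the ladder of presentations is written out explicitly.
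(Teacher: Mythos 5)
Your proposal is correct and is exactly the standard argument: the paper itself does not prove this lemma but cites Brodmann–Sharp, and the proof given there is precisely your colimit of the ladders $0\to\mathrm{Hom}_R(R/\mathfrak{a}^t,M)\to M\to\mathrm{Hom}_R(\mathfrak{a}^t,M)\to\mathrm{Ext}^1_R(R/\mathfrak{a}^t,M)\to 0$, together with the identifications $\varinjlim_t\mathrm{Hom}_R(R/\mathfrak{a}^t,M)\simeq\Gamma_{\mathfrak{a}}(M)$ and $\varinjlim_t\mathrm{Ext}^1_R(R/\mathfrak{a}^t,M)\simeq H^1_{\mathfrak{a}}(M)$. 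All the ingredients you invoke (exactness of filtered colimits, commuting colimits past cohomology of an injective resolution) are valid, so no gap.
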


\begin{lem}\label{fac}
  (Brodmann and Sharp \cite[Proposition 2.2.13]{brodsharp}) Let $R$ be a Noetherian ring, and ${\mathfrak{a}}\subseteq R$ be an ideal. Let $e:M\to M^\prime$ be a homomorphism of $R$-modules such that $\text{Ker}{e}$ and $\text{Coker}{e}$ are both ${\mathfrak{a}}$-torsion. Then
  \begin{enumerate}[(i)]
  \item The map $D_{\mathfrak{a}}(e):D_{\mathfrak{a}}(M)\to D_{\mathfrak{a}}(M^\prime)$ is an isomorphism.
  \item There is a unique $R$-module homomorphism $\varphi:M^\prime\to D_{\mathfrak{a}}(M)$ such that the diagram
    \begin{center}
    \begin{tikzcd}
      M\ar[r,"e"]\ar[dr,"\eta_{\mathfrak{a}}(M)"'] & M^\prime\ar[d,"\varphi"]\\
      &D_{\mathfrak{a}}(M)
    \end{tikzcd}
    \end{center}
    commutes. In fact, $\varphi = D_{\mathfrak{a}}(e)^{-1}\circ \eta_{\mathfrak{a}}(M^\prime)$.
  \item The map $\varphi$ of (ii) is an isomorphism if and only if $\eta_{\mathfrak{a}}(M^\prime)$ is an isomorphism, and this is the case if and only if $\Gamma_{\mathfrak{a}}(M^\prime)=H^1_{\mathfrak{a}}(M^\prime)=0$.
  \end{enumerate}
\end{lem}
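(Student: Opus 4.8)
The plan is to deduce all three parts formally from Lemma~\ref{four} together with the basic fact that the ideal transform kills torsion. The first step I would take is to record the following: if $T$ is an $\mathfrak{a}$-torsion $R$-module, then $D_{\mathfrak{a}}(T)=0$, and $\mathscr{R}^i D_{\mathfrak{a}}(T)\cong H^{i+1}_{\mathfrak{a}}(T)=0$ for every $i\geq 0$. The first vanishing is immediate from the four-term exact sequence of Lemma~\ref{four} applied to $T$, since $\Gamma_{\mathfrak{a}}(T)=T$ and $H^1_{\mathfrak{a}}(T)=0$ squeeze $D_{\mathfrak{a}}(T)$ to zero; the second holds because $H^{j}_{\mathfrak{a}}(T)=0$ for all $j\geq 1$.

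For (i), I would factor $e$ as $M\twoheadrightarrow N\hookrightarrow M'$ with $N=\text{Im}(e)$ and treat the two short exact sequences $0\to\text{Ker}(e)\to M\to N\to 0$ and $0\to N\to M'\to\text{Coker}(e)\to 0$ separately. Applying $D_{\mathfrak{a}}$ and its right derived functors to the first, the long exact sequence together with $D_{\mathfrak{a}}(\text{Ker}(e))=0$ and $\mathscr{R}^1 D_{\mathfrak{a}}(\text{Ker}(e))=0$ yields $D_{\mathfrak{a}}(M)\xrightarrow{\sim}D_{\mathfrak{a}}(N)$; applying the left exact functor $D_{\mathfrak{a}}$ to the second and using $D_{\mathfrak{a}}(\text{Coker}(e))=0$ yields $D_{\mathfrak{a}}(N)\xrightarrow{\sim}D_{\mathfrak{a}}(M')$. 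Since $D_{\mathfrak{a}}$ is a functor, the composite of these isomorphisms is $D_{\mathfrak{a}}(e)$, which is therefore an isomorphism.

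For (ii), naturality of $\eta_{\mathfrak{a}}(-)$ applied to the morphism $e$ gives $D_{\mathfrak{a}}(e)\circ\eta_{\mathfrak{a}}(M)=\eta_{\mathfrak{a}}(M')\circ e$, so, using (i), the map $\varphi:=D_{\mathfrak{a}}(e)^{-1}\circ\eta_{\mathfrak{a}}(M')$ satisfies $\varphi\circ e=\eta_{\mathfrak{a}}(M)$, i.e.\ the triangle commutes. For uniqueness, if $\psi\colon M'\to D_{\mathfrak{a}}(M)$ also satisfies $\psi\circ e=\eta_{\mathfrak{a}}(M)$, then $\varphi-\psi$ vanishes on $\text{Im}(e)$ and hence factors through $\text{Coker}(e)$, which is $\mathfrak{a}$-torsion; since $\Gamma_{\mathfrak{a}}(D_{\mathfrak{a}}(M))=0$, i.e.\ $D_{\mathfrak{a}}(M)$ is $\mathfrak{a}$-torsionfree, any homomorphism from an $\mathfrak{a}$-torsion module to $D_{\mathfrak{a}}(M)$ is zero, so $\varphi=\psi$. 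Finally, for (iii): since $D_{\mathfrak{a}}(e)$ is an isomorphism, $\varphi=D_{\mathfrak{a}}(e)^{-1}\circ\eta_{\mathfrak{a}}(M')$ is an isomorphism if and only if $\eta_{\mathfrak{a}}(M')$ is one, and by the four-term exact sequence of Lemma~\ref{four} for $M'$ the map $\eta_{\mathfrak{a}}(M')$ is injective exactly when $\Gamma_{\mathfrak{a}}(M')=0$ and surjective exactly when $H^1_{\mathfrak{a}}(M')=0$.

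No step here is genuinely hard; the only point needing a little care is the torsion computation underlying (i), which requires both the identification $\mathscr{R}^i D_{\mathfrak{a}}(-)\cong H^{i+1}_{\mathfrak{a}}(-)$ and the vanishing of positive-degree local cohomology of a torsion module. Everything else is formal diagram chasing with the natural transformation $\eta_{\mathfrak{a}}$ and the torsionfreeness of $D_{\mathfrak{a}}(M)$.
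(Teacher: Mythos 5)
Your proof is correct; the paper itself gives no argument for this lemma (it simply cites Brodmann and Sharp), and your deduction from Lemma \ref{four}, the vanishing of $D_{\mathfrak{a}}$ and its right derived functors on $\mathfrak{a}$-torsion modules, and the $\mathfrak{a}$-torsionfreeness of $D_{\mathfrak{a}}(M)$ is exactly the standard argument. The only cosmetic point is that the identification $\mathscr{R}^i D_{\mathfrak{a}}(-)\cong H^{i+1}_{\mathfrak{a}}(-)$ is valid only for $i\geq 1$ (for $i=0$ the left-hand side is $D_{\mathfrak{a}}$ itself), but since you establish $D_{\mathfrak{a}}(T)=0$ separately from the four-term exact sequence, no step is affected.
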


The main ingredient to dealing with $H^2_I(J)$ when $\text{depth}_I(R)=1$ is the following compatibility property of the $I$-transform functor with our decomposition $I=yR\cap I_0$.
\begin{prop}\label{loc}
Let $R$ be a Noetherian ring, $y$ an element of $R$, and $I_0\subseteq R$ an ideal. Let $I=yR\cap I_0$. There is a natural isomorphism of functors $D_{I_0}(-)_y\simeq D_{I}(-)$
\end{prop}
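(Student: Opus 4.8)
The plan is to route the claimed isomorphism through $D_I(-)_y$. I will establish two facts: (a) $D_{I_0}(-)_y$ and $D_I(-)_y$ are naturally isomorphic; and (b) multiplication by $y$ already acts invertibly on $D_I(M)$ for every $M$, so that the localization map $D_I(-)\to D_I(-)_y$ is itself a natural isomorphism. The desired isomorphism is then the composite $D_{I_0}(-)_y\xrightarrow{\sim}D_I(-)_y\xleftarrow{\sim}D_I(-)$, which is natural in $M$.

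For (a): since $R$ is Noetherian, every ideal $\mathfrak{a}^t$ is finitely presented, so there is a natural isomorphism $\text{Hom}_R(\mathfrak{a}^t,M)_y\simeq\text{Hom}_{R_y}(\mathfrak{a}^tR_y,M_y)$; because direct limits commute with localization, this gives a natural isomorphism $D_{\mathfrak{a}}(M)_y\simeq D_{\mathfrak{a}R_y}(M_y)$ for any ideal $\mathfrak{a}$ (cf. \cite{brodsharp}). Now localization commutes with finite intersections of ideals and $y$ is a unit in $R_y$, so $I=yR\cap I_0$ gives $IR_y=yR_y\cap I_0R_y=I_0R_y$. Therefore $D_{I_0}(M)_y\simeq D_{I_0R_y}(M_y)=D_{IR_y}(M_y)\simeq D_I(M)_y$, naturally in $M$. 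This step is essentially formal.

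For (b), fix an $R$-module $M$; I claim multiplication by $y$ is bijective on $D_I(M)$. Injectivity: from $I\subseteq yR$ we get $I^t\subseteq y^tR$, so any element of any module killed by $y^t$ is killed by $I^t$; hence the set of elements of $D_I(M)$ annihilated by a power of $y$ lies in $\Gamma_I(D_I(M))$, which is $0$ by the standard vanishing $\Gamma_I(D_I(M))=0$. Surjectivity: by Lemma \ref{fac}(i), the quotient $M\to M/\Gamma_I(M)$ induces an isomorphism $D_I(M)\simeq D_I(M/\Gamma_I(M))$, and $M/\Gamma_I(M)$ is $I$-torsion-free, hence $y$-torsion-free by the observation just made; so we may assume $y$ is a nonzerodivisor on $M$. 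Given $[\phi]\in D_I(M)$ represented by $\phi\colon I^s\to M$, note $I^{s+1}=I\cdot I^s\subseteq yI^s$, so every $a\in I^{s+1}$ is of the form $a=yb$ with $b\in I^s$; set $\psi(a):=\phi(b)$. This is well-defined: if $yb=yb'$ with $b,b'\in I^s$, then $y\phi(b-b')=\phi(y(b-b'))=0$, so $\phi(b)=\phi(b')$ since $y$ is a nonzerodivisor on $M$. The map $\psi\colon I^{s+1}\to M$ is clearly $R$-linear, and $y\psi=\phi|_{I^{s+1}}$, so $[\phi]=[\phi|_{I^{s+1}}]=y[\psi]$ in $D_I(M)$. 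Hence $y$ acts surjectively, so it acts invertibly, and $D_I(M)\to D_I(M)_y$ is an isomorphism, naturally in $M$.

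Chaining (a) and (b) gives the asserted natural isomorphism $D_{I_0}(-)_y\simeq D_I(-)$. The only real content is the surjectivity half of (b): the $y$-divisibility of $D_I(M)$, where the step of ``dividing a homomorphism by $y$'' forces one to first replace $M$ by its $I$-torsion-free quotient; the other ingredients (localizing the transform, $y$-torsion-freeness of $D_I(M)$) are routine. If desired, one can further check that this isomorphism is the unique factorization through $D_{I_0}(-)_y$ of the restriction map $D_{I_0}(-)\to D_I(-)$ induced by $I\subseteq I_0$, but the statement does not require it.
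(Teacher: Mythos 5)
Your proof is correct, but it takes a genuinely different route from the one in the paper. The paper constructs the map $D_{I_0}(-)_y\to D_I(-)$ directly via the universal property of the ideal transform (Lemma \ref{fac}(ii)), after checking that the composite $M\to D_{I_0}(M)_y$ has $I$-torsion kernel and cokernel, and then proves it is an isomorphism by verifying $\Gamma_I(D_{I_0}(M)_y)=H^1_I(D_{I_0}(M)_y)=0$ with a Mayer--Vietoris sequence for the intersection $yR\cap I_0$. You instead factor the comparison through $D_I(-)_y$: step (a) is the flat base change isomorphism $D_{\mathfrak{a}}(M)_y\simeq D_{\mathfrak{a}R_y}(M_y)$ together with the observation $IR_y=I_0R_y$, and step (b) is an explicit verification that $y$ already acts invertibly on $D_I(M)$ (injectivity from $I^t\subseteq y^tR$ and $\Gamma_I(D_I(M))=0$; surjectivity by dividing a homomorphism $I^s\to M$ by $y$ on $I^{s+1}\subseteq yI^s$, after first passing to the $I$-torsion-free quotient of $M$, which your own torsion observation shows is $y$-torsion-free). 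Both arguments are sound; yours is more elementary in that it avoids Mayer--Vietoris and the vanishing of $H^1_I$ on the transform, at the cost of an element-level computation, while the paper's version automatically exhibits the isomorphism as the canonical one compatible with the natural maps $\eta$ from the identity functor --- a compatibility the statement does not require, and which your closing remark correctly identifies as an optional extra check. Either isomorphism suffices for the application in Corollary \ref{depth1coh}, since the right derived functors there depend on $D_I(-)$ only up to natural isomorphism.
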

\begin{proof}
Precomposing $\eta_{I_0}(-)_y:(-)_y\to D_{I_0}(-)_y$ with $\text{Id}\to (-)_y$ we obtain a natural transformation $\gamma(-):\text{Id}\to D_{I_0}(-)_y$. We claim that for any module $M$, both the kernel and cokernel of $\gamma(M):M\to D_{I_0}(M)_y$ are $I=yR\cap I_0$-torsion:
    \begin{itemize}
    \item $\text{Ker}{\gamma(M)}$ consists of those $m\in M$ such that $m/1\in \Gamma_{I_0}(M)_y$, or, equivalently, $y^t m\in \Gamma_{I_0}(M)$ for some $t\geq 0$. Let $s$ be such that $I_0^s y^t m =0$. Then
      $(yI_0)^{\max(s,t)}m=0$, so $m\in \Gamma_{yI_0}(M)=\Gamma_{yR\cap I_0}(M)$ (since $\sqrt{yI_0} = \sqrt{yR\cap I_0}$).
    \item An element of $C=\text{Coker}{\gamma(M)}$ can be represented by $c=f/y^t$ for some $f\in D_{I_0}(M)$, $t\geq 0$. $\text{Coker}{\eta_{I_0}(M)}$ is $I_0$-torsion, so there is some $s$ such that $I_0^s f\subseteq \text{Im}{\eta_{I_0}(M)}$. Since $f=y^t c$, we have $(yI_0)^{\max(s,t)}c \subseteq \text{Im}{\gamma(M)}$. The element of $C$ represented by $c$ therefore belongs to $\Gamma_{yI_0}(C) = \Gamma_{yR\cap I_0}(C)$.
    \end{itemize}
    Lemma \ref{fac}(ii) therefore gives a map $\varphi(M): D_{I_0}(M)_y\to D_I(M)$, specifically $\varphi(M)=D_I(\gamma(M))^{-1}\circ \eta_{I}(D_{I_0}(M)_y)$. Both of the composite maps come from natural transformations $D_I(\gamma(-))^{-1}$ and $\eta_I(D_{I_0}(-)_y)$, so the result is a natural transformation
    $\varphi(-):D_{I_0}(-)_y\to D_I(-)$.

    It remains to show that that $\varphi(M)$ is an isomorphism for each $M$, which is equivalent, by Lemma \ref{fac}(iii), to showing that $\Gamma_I(D_{I_0}(M)_y)=H^1_I(D_{I_0}(M)_y)=0$. This can be done using the Mayer-Vietoris sequence associated with the intersection $yR\cap I_0$. Each module $H^i_{yR+I_0}(D_{I_0}(M)_y)$ vanishes because $y\in yR+I_0$ acts as a unit on
$D_{I_0}(M)_y$, and likewise for the modules $\Gamma_{yR}(D_{I_0}(M)_y)$ and $H^1_{yR}(D_{I_0}(M)_y)$. Note that $\text{depth}_{I_0}(D_{I_0}(M))\geq 2$, and localization can only make depth go up, so, $\Gamma_{I_0}(D_{I_0}(M)_y)=H^1_{I_0}(D_{I_0}(M)_y)=0$.
\begin{center}{
\begin{tikzpicture}[descr/.style={fill=white,inner sep=1.5pt}]
        \matrix (m) [
            matrix of math nodes,
            row sep=1em,
            column sep=2.5em,
            text height=1.5ex, text depth=0.25ex
        ]
        { 0 & 0 & 0 & \Gamma_{yR\cap I_0}(D_{I_0}(M)_y)\\
           & 0 & 0 & H^1_{yR\cap I_0}(D_{I_0}(M)_y)\\
           & 0 & 0\oplus H^2_{I_0}(D_{I_0}(M)_y) & H^2_{yR\cap I_0}(D_{I_0}(M)_y)\\
        };

        \path[overlay,->, font=\scriptsize,>=latex]
        (m-1-1) edge (m-1-2)
        (m-1-2) edge (m-1-3)
        (m-1-3) edge (m-1-4)
        (m-1-4) edge[out=350,in=170] (m-2-2)
        (m-2-2) edge (m-2-3)
        (m-2-3) edge (m-2-4)
        (m-2-4) edge[out=350,in=170] (m-3-2)
        (m-3-2) edge (m-3-3)
        (m-3-3) edge (m-3-4)
;
\end{tikzpicture}}
\end{center}
We can now see that $\Gamma_I(D_{I_0}(M)_y)=H^1_I(D_{I_0}(M)_y)=0$, as desired.
\end{proof}

\begin{corol}\label{depth1coh}
Let $R$ be a Noetherian ring, $y$ an element of $R$, and $I_0\subseteq R$ an ideal. Let $I=yR\cap I_0$. Then for all $i\geq 2$, there is a natural isomorphism of functors $H^i_I(-)\simeq H^i_{I_0}(-)_y$.
\end{corol}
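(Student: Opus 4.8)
The plan is to deduce this directly from Proposition \ref{loc} by passing to right derived functors. Recall (as noted in the discussion preceding Lemma \ref{four}) that for any ideal $\mathfrak{a}\subseteq R$ the ideal transform $D_{\mathfrak{a}}(-)$ is a left exact covariant functor whose right derived functors satisfy $\mathscr{R}^k D_{\mathfrak{a}}(-)\simeq H^{k+1}_{\mathfrak{a}}(-)$ naturally for $k\geq 1$; equivalently, $H^i_{\mathfrak{a}}(-)\simeq \mathscr{R}^{i-1}D_{\mathfrak{a}}(-)$ for all $i\geq 2$. Applying this with $\mathfrak{a}=I$ reduces the statement to identifying $\mathscr{R}^{i-1}D_I(-)$ with $H^i_{I_0}(-)_y$ for $i\geq 2$.

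Next I would invoke Proposition \ref{loc}, which supplies a natural isomorphism of functors $D_I(-)\simeq D_{I_0}(-)_y$. A natural isomorphism between left exact functors induces natural isomorphisms between all of their right derived functors, since one may compute both sides on a common injective resolution and the isomorphism is then an isomorphism of the resulting cochain complexes. Hence $\mathscr{R}^{i-1}D_I(-)\simeq \mathscr{R}^{i-1}\bigl(D_{I_0}(-)_y\bigr)$ naturally. It remains to commute $\mathscr{R}^{i-1}$ past the localization functor $(-)_y$: because $(-)_y$ is exact, every $D_{I_0}(E)$ with $E$ injective is $(-)_y$-acyclic, so the Grothendieck spectral sequence for the composite $(-)_y\circ D_{I_0}(-)$ collapses and yields a natural isomorphism $\mathscr{R}^{i-1}\bigl(D_{I_0}(-)_y\bigr)\simeq \bigl(\mathscr{R}^{i-1}D_{I_0}(-)\bigr)_y\simeq H^i_{I_0}(-)_y$. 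Stringing these together gives, for $i\geq 2$,
$$
H^i_I(-)\;\simeq\;\mathscr{R}^{i-1}D_I(-)\;\simeq\;\mathscr{R}^{i-1}\bigl(D_{I_0}(-)_y\bigr)\;\simeq\;\bigl(\mathscr{R}^{i-1}D_{I_0}(-)\bigr)_y\;\simeq\;H^i_{I_0}(-)_y,
$$
which is the desired natural isomorphism.

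I do not anticipate a genuine obstacle here: the whole argument is an assembly of standard homological facts, with the one nontrivial input being Proposition \ref{loc}. The only point requiring a little care is checking that each isomorphism in the chain is natural in the module variable — but this holds because each arises either from Proposition \ref{loc}, from the (natural) dimension-shift identification of $\mathscr{R}^{i-1}D_{\mathfrak{a}}$ with $H^i_{\mathfrak{a}}$, or from the exactness of localization — so the composite is a natural isomorphism of functors as claimed.
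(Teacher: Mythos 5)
Your proposal is correct and follows essentially the same route as the paper: reduce to the right derived functors of the ideal transform, apply Proposition \ref{loc} on an injective resolution, and use the exactness of localization to commute $(-)_y$ past cohomology. The paper phrases the last step simply as ``$(-)_y$ commutes with the formation of cohomology'' rather than via a degenerate Grothendieck spectral sequence, but the content is identical.
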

\begin{proof}
  It is equivalent to show that $\mathscr{R}^{i-1}D_I(-)\simeq \left(\mathscr{R}^{i-1}D_{I_0}(-)\right)_y$. We
  can calculate $\mathscr{R}^{i-1}D_I(M)$ as $H^{i-1}(D_I(E^\bullet))$ where $M\to E^\bullet$ is an injective resolution, but by Proposition \ref{loc}, $D_I(-)\simeq D_{I_0}(-)_y$ where $(-)_y$ commutes with the formation of cohomology. Thus, $$H^{i-1}(D_I(E^\bullet))\simeq H^{i-1}(D_{I_0}(E^\bullet))_y= \left(\mathscr{R}^{i-1}D_{I_0}(M)\right)_y.$$
\end{proof}

\begin{theor}\label{H2}
  Let $R$ be a locally almost factorial Noetherian normal ring, and $I$, $J$ be ideals of $R$.
  The set $\text{Ass}\, H^2_I(J)$ is finite.
\end{theor}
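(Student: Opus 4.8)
The plan is to reduce the general statement to the main case $\depth_I(R)=1$, handle that case using the $I$-transform machinery built up in Proposition~\ref{loc} and Corollary~\ref{depth1coh}, and then patch the local pictures together over a finite affine cover.

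First I would dispose of trivialities and reduce to a domain. If $I=(0)$ or $I=R$ there is nothing to prove; otherwise we may pass to $R/\mf{p}$ as $\mf{p}$ ranges over the (finitely many) minimal primes of $R$ — since $R$ is normal, $\Spec(R)$ is a disjoint union of the integral closed subschemes $\Spec(R/\mf{p}_k)$, and a standard argument lets us assemble $\Ass H^2_I(J)$ from the $\Ass H^2_{I(R/\mf{p}_k)}(J(R/\mf{p}_k))$. (The hypothesis ``locally almost factorial'' is inherited by each $R/\mf{p}_k$.) So assume $R$ is a normal domain. Next, by Lemma~\ref{depth1}, if $\depth_I(R)\neq 1$ we are already done, so we are reduced to the case $\depth_I(R)=1$.

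Now I would invoke Corollary~\ref{localprinc}: there is a finite affine cover $\Spec(R)=\bigcup_i\Spec(R_{f_i})$ and an ideal $I_0\subseteq R$ with $\depth_{I_0}(R)\geq 2$ such that on each chart $\sqrt{IR_{f_i}}=\sqrt{y_iR_{f_i}}\cap I_0$ for some $y_i\in R$. Since local cohomology depends only on the radical and commutes with localization, it suffices to prove $\Ass H^2_{IR_{f_i}}(JR_{f_i})$ is finite for each $i$; and since $\Ass H^2_I(J)$ is recovered from these by pulling back along the open immersions (a subset of $\Spec(R)$ is recovered from its restrictions to a finite open cover), finiteness on each chart gives finiteness globally. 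Fix one chart and rename $R_{f_i}$ as $R$, $y_i$ as $y$; so now $I$ has the same radical as $yR\cap I_0$ with $\depth_{I_0}(R)\geq 2$. By Corollary~\ref{depth1coh}, $H^2_I(J)\simeq H^2_{I_0}(J)_y$, which is a localization of $H^2_{I_0}(J)$. Since $\depth_{I_0}(R)\geq 2$, the module $H^2_{I_0}(J)$ is — by the same diagram chase as in Lemma~\ref{depth1}, now applied to $I_0$ in place of $I$ — either finitely generated or the first non-finitely-generated local cohomology module $H^{j}_{I_0}(J)$ with $j$ minimal, so by Brodmann--Lashgari Faghani \cite[Theorem~2.2]{brodlash} it has only finitely many associated primes; hence so does its localization $H^2_{I_0}(J)_y\simeq H^2_I(J)$.

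The step I expect to be the main obstacle is making the ``patching'' rigorous: one must check that $\Ass H^2_I(J)$ really is controlled by the union of the associated primes on the charts $\Spec(R_{f_i})$ — i.e. that a prime $P$ with $H^2_I(J)_P\neq 0$ contributes to some chart containing it, and, conversely, that restriction does not create spurious associated primes — and that the decomposition $\sqrt{IR_{f_i}}=\sqrt{y_iR_{f_i}}\cap I_0$ can be arranged with a single $I_0$ valid on all charts (which is exactly what Corollary~\ref{localprinc} provides, so the work is really in using it correctly). A secondary technical point is the reduction from normal ring to normal domain, where one must verify that $\depth_{I_0}(R)\geq 2$ and the ``locally almost factorial'' hypothesis behave well under passing to the components. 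Neither difficulty is deep, but both require care with the bookkeeping of associated primes under localization and finite covers.
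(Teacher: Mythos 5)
Your proposal is correct and follows essentially the same route as the paper: reduce to a normal domain, dispose of $\depth_I(R)\neq 1$ via Lemma \ref{depth1}, use Corollary \ref{localprinc} to pass to a finite cover where $I$ decomposes (up to radical) as $yR\cap I_0$ with $\depth_{I_0}(R)\geq 2$, apply Corollary \ref{depth1coh} to identify $H^2_I(J)$ with $H^2_{I_0}(J)_y$, and finish with Lemma \ref{depth1} applied to $I_0$. The ``patching'' step you flag as the main obstacle is standard (associated primes of a localization are the contractions of associated primes meeting the multiplicative set, so finiteness on each chart of a finite cover gives finiteness globally) and is treated as routine in the paper as well.
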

\begin{proof}
  $R$ is a product of normal domains $R_1\times\cdots\times R_k$, and $J$ is a product of ideals
  $J_1\times\cdots\times J_k$ with $J_i\subseteq R_i$. It is enough to show that $\text{Ass}\, H^2_{IR_i}(J_i)$ is finite for all $i$, so assume that $R$ is a domain. By Lemma \ref{depth1}, we need only deal with the case in which $\text{depth}_I(R)=1$. It is enough to show that $\text{Ass}\, H^2_{IR_f}(J_f)$ is finite
  for each chart $\text{Spec}(R_f)$ in a finite cover of $\text{Spec}(R)$. By Corollary \ref{localprinc}, working with one chart at a time, and replacing $R$ by $R_f$ and $I$ by an ideal with the same radical,
  we may assume that $I$ has the form $I=yR\cap I_0$ where $\text{depth}_{I_0}(R)\geq 2$.
  By Corollary \ref{depth1coh}, this decomposition gives $H^2_I(J)\simeq H^2_{I_0}(J)_y$. It is therefore enough to show that $\text{Ass}\, H^2_{I_0}(J)$ is finite. But $\text{depth}_{I_0}(R)\geq 2$, so this follows at once from Lemma \ref{depth1}.
\end{proof}

\section{An example of an infinite $\text{Ass}\, H^3_I(J)$}
Let $K$ be a field, $A = K[u,v,w,x,y,z]$, and $f=wv^2 x^2 - (w+z)vxuy + zu^2y^2$. Katzman \cite{katzinf} showed that $\text{Ass}\, H^2_{(x,y)}(A/f)$ is infinite. The hypersurface ring $S=A/f$ can be presented as a complete intersection ring of higher codimension. For example, if $R = A[t]$ and $J=(t,f)R$, then clearly $S \simeq R/J$. Also, if $I = (t,f,x,y)R$, then $IS=(x,y)S$ and thus $H^i_{I}(S)\simeq H^i_{(x,y)}(S)$ for all $i$. In this case, $\text{depth}_I(R) = 3$ (the sequence $t,f,x\in I$ is $R$-regular), so the long exact sequence from applying $\Gamma_I(-)$ to $0 \to J \to R \to S\to 0$ begins with
\begin{center}{
\begin{tikzpicture}[descr/.style={fill=white,inner sep=1.5pt}]
        \matrix (m) [
            matrix of math nodes,
            row sep=1em,
            column sep=2.5em,
            text height=1.5ex, text depth=0.25ex
        ]
        { 0 & 0 & 0 & 0 \\
            & H^1_I(J) & 0 & H^1_{(u,v)}(S) \\
          & H^2_I(J) & 0 & H^2_{(u,v)}(S) \\
          & H^{3}_I(J) & H^3_I(R) & 0 \\
          & H^{4}_I(J) & H^{4}_I(R) & 0 \\
          & \vdots & \vdots & \vdots \\
        };

        \path[overlay,->, font=\scriptsize,>=latex]
        (m-1-1) edge (m-1-2)
        (m-1-2) edge (m-1-3)
        (m-1-3) edge (m-1-4)
        (m-1-4) edge[out=345,in=165] (m-2-2)
        (m-2-2) edge (m-2-3)
        (m-2-3) edge (m-2-4)
        (m-2-4) edge[out=345,in=165] (m-3-2)
        (m-3-2) edge (m-3-3)
        (m-3-3) edge (m-3-4)
        (m-3-4) edge[out=345,in=165] (m-4-2)
        (m-4-2) edge (m-4-3)
        (m-4-3) edge (m-4-4)
        (m-4-4) edge[out=345,in=165] (m-5-2)
        (m-5-2) edge (m-5-3)
        (m-5-3) edge (m-5-4)
        (m-5-4) edge[out=345,in=165] (m-6-2)
;
\end{tikzpicture}}
\end{center}
From this, we see that $H^2_{(x,y)}(S)$ is isomorphic to a submodule of $H^3_I(J)$. $\text{Ass}\, H^2_{(x,y)}(S)$ is infinite, so $\text{Ass}\, H^3_I(J)$ must be infinite as well. Note that this example occurs when $J$ is generated by a regular sequence of length $2$.


This approach can be used to produce examples of parameter ideals $J$ with a similar relationship to $R$ and $S$. If $S = A/J_0$ is a complete intersection ring of dimension $n$, with $A$ an LC-finite regular ring and $J_0$ an ideal generated by a regular sequence, we can let $R = A[z_1,\cdots,z_{N}]$ for $N\gg 0$, and let $J=(z_1,\cdots,z_N)R + J_0R$, and will have $S\simeq R/J$. For any ideal $I_0\subseteq A$, set $I = J + I_0 R$ and $H^i_{I_0}(S) \simeq H^i_{I}(S)$, with $d=\text{depth}_I(R) > n+1$ if $N$ is large enough. Using the long exact sequence from applying $\Gamma_I(-)$ to $0\to J \to R \to S\to 0$, it follows at once that $H^i_I(J) \simeq H^{i-1}_{I_0}(S)$ for all $1\leq i \leq n+1$, that $H^i_I(J) = 0$ for $n+2\leq i \leq d-1$, and that $H^i_I(J) \simeq H^i_I(R)$ for all $i\geq d$. In sufficiently large cohomological degrees, $\text{Ass}\, H^i_I(J)$ is finite, and in degrees $1\leq 1\leq n+1$, the local cohomology of $J$ has exactly the same finiteness properties as the local cohomology of $S$.

\section{Finiteness of $\text{Ass}\, H^i_I(J)$ vs finiteness of $\text{Ass}\, H^{i-1}_I(R/J)$}
The class of examples presented in Section 3 may be somewhat unsatisfying, since the infinite collection of embedded primes we found in $H^i_I(J)$ was directly inherited from $H^{i-1}_I(R/J)$. It is an interesting and unresolved question whether there exist modules $H^i_I(J)$ that exhibit this type of behavior \textit{even when} $H^{i-1}_I(R/J)$ is well-controlled. To be precise, the following question is, to the best of our knowledge, still open:

\vspace{0.5em}
\noindent\textbf{Question 3.} Let $R$ be an LC-finite regular ring, $J\subseteq R$ be an ideal generated by a regular sequence, and $I$ be an ideal of $R$ containing $J$. Does the finiteness of $\text{Ass}\, H^{i-1}_I(R/J)$ imply the finiteness of $\text{Ass}\, H^i_I(J)$?
\vspace{0.5em}

If $J$ is generated by a regular sequence of length $1$, then $J\simeq R$ as an $R$-module, and $\text{Ass}\, H^i_I(J)$ is finite, so the claim is trivially true. We therefore restrict our attention to when $\text{depth}_J(R)\geq 2$. We think of $i$ as being fixed with $I$ varying. The case $i = 2$ is completely answered, since $\text{Ass}\, H^1_I(R/J)$ is finite (as is true of $H^1_I(M)$ for any finitely generated module $M$), and $\text{Ass}\, H^2_I(J)$ is finite by Theorem \ref{H2}. The goal of this section is to give a partial positive answer to this question when $i=3$ and when $i=4$. As $i$ gets larger, our results require increasingly restrictive hypotheses on $R/J$.

To begin, notice that we can very easily ignore ideals $I$ where the depth of $R$ on $I$ is too large.

\begin{lem}\label{largedep}
Let $R$ be a Noetherian ring, let $I$ and $J$ be ideals of $R$, and let $S=R/J$. Fix $i\geq 1$ and assume $\text{Ass}\, H^i_I(R)$ is finite. If $\text{depth}_I(R)>i-1$, then $\text{Ass}\, H^{i}_I(J)$ is finite if and only if $\text{Ass}\, H^{i-1}_I(R/J)$ is finite.
\end{lem}
\begin{proof}
There is a short exact sequence
$
0 \to H^{i-1}_I(R/J) \to H^{i}_I(J) \to N \to 0
$
where $N\subseteq H^i_I(R)$, so $\text{Ass}\, N$ is finite.
\end{proof}

We may therefore restrict our focus to the case where $\text{depth}_I(R)\leq i-1$. We will show that it actually suffices to only consider ideals $I$ such that $\text{depth}_I(R) =i-1$. This crucial simplification is inspired by a very similar strategy employed by Hellus, albeit for different applications \cite{hellus}.

\subsection{A generalized isomorphism of Hellus}
Before proceeding, we require a notion of parameters in a global ring, and the following lemma provides one suitable enough for use in our main proofs.
\begin{lem}
  Let $R$ be a Noetherian ring, let $I$ be a proper ideal of height $h\geq 0$, and let $J\subseteq I$
  be an ideal of height $j\geq 0$.
  \begin{enumerate}[(a)]
  \item If an ideal of the form $(x_1,\cdots,x_h)R$ has height $h$, then it has pure height $h$.
  \item Any sequence $x_1,\cdots,x_j\in J$
    generating an ideal of height $j$ (including the empty sequence if $j=0$) can be extended to a sequence $x_1,\cdots,x_h\in I$ generating an ideal of height $h$.
  \item There is a sequence $x_1,\cdots,x_h\in I$ such that $(x_1,\cdots,x_h)R$ has (necessarily pure)
    height $h$.
  \end{enumerate}
\end{lem}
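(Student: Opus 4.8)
The plan is to prove the three parts in the order $(a) \Rightarrow (b) \Rightarrow (c)$, since $(c)$ is an immediate consequence of $(b)$ applied to the empty sequence (taking $J = (0)$ if necessary, though here we can just take the empty sequence in $I$ directly), and $(b)$ will use $(a)$ to control the heights of the ideals being built up.

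For part $(a)$, the claim is that an ideal $(x_1,\dots,x_h)R$ of height $h$ generated by $h$ elements has \emph{pure} height $h$, i.e., every minimal prime over it has height exactly $h$. First, by Krull's height theorem, every minimal prime $P$ over an ideal generated by $h$ elements satisfies $\Ht(P) \leq h$; on the other hand, since $(x_1,\dots,x_h)R$ has height $h$, every minimal prime $P$ over it has $\Ht(P) \geq h$. Combining these, $\Ht(P) = h$ for every such minimal prime, which is exactly pure height $h$. This part is essentially Krull's principal ideal theorem and requires no real work.

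For part $(b)$, I would argue by induction, extending the sequence one element at a time. Suppose we have $x_1,\dots,x_k \in I$ with $k < h$ generating an ideal of height $k$ (by part $(a)$, of pure height $k$). Let $P_1,\dots,P_m$ be the minimal primes over $(x_1,\dots,x_k)R$; each has height exactly $k < h = \Ht(I)$, so none of them contains $I$. Hence $I \not\subseteq P_1 \cup \dots \cup P_m$ by prime avoidance, and we may choose $x_{k+1} \in I$ avoiding all the $P_\ell$. Then $(x_1,\dots,x_{k+1})R$ strictly increases height: any minimal prime over it contains some $P_\ell$ but strictly (since it contains $x_{k+1} \notin P_\ell$), so has height $\geq k+1$; and it has height $\leq k+1$ by Krull, so height exactly $k+1$. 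Continuing until $k = h$ gives the desired extension. The only subtlety is ensuring that prime avoidance applies — which it does since we are avoiding finitely many primes — and that the height genuinely goes up at each step, which the strict containment argument handles.

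For part $(c)$, apply part $(b)$ with $J$ the zero ideal (height $0$) and the empty generating sequence; this produces $x_1,\dots,x_h \in I$ generating an ideal of height $h$, which has pure height $h$ by part $(a)$. The main obstacle, such as it is, is purely bookkeeping: making sure the base case of the induction in $(b)$ is handled correctly when $j = 0$ (the empty sequence generates the zero ideal, whose unique minimal primes are the minimal primes of $R$, none of which contains $I$ since $\Ht(I) = h \geq 0$ — if $h = 0$ there is nothing to do, and if $h > 0$ then $I$ is not contained in any minimal prime of $R$). No step here is genuinely hard; the lemma is a standard ``parameters exist globally'' statement whose proof is just careful application of Krull's height theorem and prime avoidance.
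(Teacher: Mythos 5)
Your proof is correct and follows essentially the same route as the paper: part (a) by combining Krull's height theorem with the definition of height, part (b) by prime avoidance over the minimal primes of the partial sequence followed by the strict-containment argument to force the height up by one, and part (c) by specializing (b) to the empty sequence. The only difference is that you spell out the $j=0$ base case slightly more explicitly, which does no harm.
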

\begin{proof}
  (a) Every minimal prime of a height $h$ ideal has height at least $h$, and every minimal prime
  of an $h$-generated ideal has height at most $h$.
    
  (b) If $j=h$, there is nothing to do, so assume $j<h$. By induction, it is enough to show that
  we can extend the sequence by one element. Since $j<h$, $I$ is not contained in any minimal
  prime of $(x_1,\cdots,x_j)R$ (all of which have height $j$), and so we may choose $x\in I$
  avoiding all such primes. A height $j$ prime containing $(x_1,\cdots,x_j)R$
  therefore cannot also contain $xR$. Thus, the minimal primes of $(x,x_1,\cdots,x_j)R$ have
  height at least $j+1$. By Krull's height theorem, they also have height at most $j+1$.
  
  (c) This follows at once from (b) by taking $J=(0)$.
\end{proof}
\noindent By convention, we take the height of the unit ideal to be $+\infty$. An intersection
of prime ideals of $R$ indexed by the empty set is taken to be $\bigcap_{i\in\varnothing} P_i=R$. If $R$ is a ring and $I$ is an ideal, let $\text{ara}_R(I)$ denote the arithmetic rank of $I$, i.e. the minimum number of generators among all ideals with the same radical as $I$.
\begin{lem}\label{chooseelt}
  Let $R$ be a Noetherian ring, let $I$ be a proper ideal of height $h$, and let $J\subseteq I$ be
  an ideal of height $j\leq h$ generated by $j$ elements. There is an element $y\in R$
  that satisfies the following properties.
  \begin{enumerate}[(i)]
  \item $\text{ara}_R(yR\cap I)\leq h$
  \item $\text{ara}_{R/J}(y(R/J)\cap(I/J))\leq h-j$
  \item $\text{ht}(yR+I)\geq h+1$
  \end{enumerate}
\end{lem}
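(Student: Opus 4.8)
The idea is to produce $y$ by a generic-linear-combination argument, choosing $y$ to lie in $I$ but to avoid a finite list of primes, so that it simultaneously cuts down the arithmetic rank of $yR\cap I$ (and of its image in $R/J$) and raises the height of $yR+I$. First, use the previous lemma to extend the given generators $x_1,\dots,x_j$ of $J$ to a sequence $x_1,\dots,x_h\in I$ generating an ideal $\mathfrak{b}=(x_1,\dots,x_h)R$ of pure height $h$. Let $P_1,\dots,P_r$ be the (height $h$) minimal primes of $\mathfrak{b}$; since $\Ht(I)=h$ but $I$ is not contained in any minimal prime of $R$ of height $<h$, and more to the point $I$ may or may not be contained in some $P_k$, we separate the $P_k$ into those containing $I$ and those not. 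The plan is to pick $y\in I$ of the form $y = x_h + (\text{combination of }x_1,\dots,x_{h-1})$ — or more robustly $y\in I$ avoiding every minimal prime of $\mathfrak{b}$ that does not already contain $I$ — using prime avoidance. Concretely I would set $y$ to be a generic element of $I$; by prime avoidance $y$ can be chosen to lie outside each $P_k\not\supseteq I$ and outside each minimal prime of $I$ of height exactly $h$ as well.

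For (iii): if $Q$ is a minimal prime of $yR+I$, then $Q\supseteq I$ so $\Ht Q\geq h$; if moreover $\Ht Q = h$ then $Q$ is a minimal prime of $I$, hence (being height $h$, the minimal possible) $Q$ is one of the height-$h$ minimal primes of $I$, and we arranged $y\notin Q$, a contradiction. Hence every minimal prime of $yR+I$ has height $\geq h+1$, i.e. $\Ht(yR+I)\geq h+1$.

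For (i) and (ii): the point is that $\sqrt{yR\cap I} = \sqrt{yR}\cap\sqrt{I}$, and I claim $\sqrt{I}\subseteq\sqrt{yR+\mathfrak{b}'}$ for a suitable $(h-1)$-generated sub-ideal, so that $yR\cap I$ has the same radical as an $h$-generated ideal. More precisely: let $\mathfrak{b}=(x_1,\dots,x_h)$ as above. Then $\sqrt{\mathfrak{b}}\subseteq\sqrt{I}$, and by construction every minimal prime of $I$ either contains $\mathfrak{b}$ or has height $>h$ — wait, one must be careful here. The cleaner route, which I would actually pursue, is: after choosing $y$ as above, show $\sqrt{I} = \sqrt{(yR + (x_1,\dots,x_{h-1})R)\cap(\text{stuff in }\sqrt I)}$, i.e. that the minimal primes of $I$ are exactly recovered, localize-and-patch style, by $y$ together with $h-1$ of the $x_i$'s; then $yR\cap I$ is, up to radical, $yR\cap (y,x_1,\dots,x_{h-1})R = yR$ intersected with something, giving $\operatorname{ara}(yR\cap I)\leq h$ since $yR\cap(a_1,\dots,a_{h-1})R$ has arithmetic rank at most $h$. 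For (ii), pass to $R/J = R/(x_1,\dots,x_j)$: the images $\bar x_{j+1},\dots,\bar x_h$ and $\bar y$ live in $I/J$, there are $h-j$ of the $\bar x_i$, and the same argument bounds $\operatorname{ara}_{R/J}(\bar y(R/J)\cap (I/J))$ by $h-j$, using $\Ht(I/J) = $ (at least) $h-j$ and the compatibility of the chosen $y$ with the quotient.

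\smallskip

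\textbf{Main obstacle.} The delicate step is (i)–(ii): proving that one element $y\in I$ together with $h-1$ (resp.\ $h-j$) of the $x_i$'s suffices to recover $\sqrt{I}$, rather than merely $\sqrt{\mathfrak{b}}$ — in other words, controlling the arithmetic rank of $yR\cap I$, not of $yR\cap\mathfrak{b}$. The subtlety is that $I$ may strictly contain $\mathfrak{b}$ and may have minimal primes beyond those of $\mathfrak{b}$; one needs the chosen $y$ to "see" those extra components. I expect the right fix is either a local-to-global patching as in the proof of Lemma preceding Corollary \ref{localprinc} (cover $\Spec R$ by charts on which $I$ becomes, up to radical, $\mathfrak{b}$-like), or a more careful prime-avoidance choosing $y$ to avoid all minimal primes of $\mathfrak{b}$ of height $h$ that fail to contain $I$ \emph{and} all minimal primes of $I$ of height $h$, then arguing directly that $\sqrt{yR\cap I}=\sqrt{yR}\cap\sqrt{I}$ is cut out by $\leq h$ elements because $V(yR)\cap V(I) \subseteq V((y,x_1,\dots,x_{h-1}))$ set-theoretically. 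Handling the quotient $R/J$ in (ii) is then a routine reduction, provided $y$ was chosen compatibly — one should verify that passing to $R/J$ does not drop $\Ht(I/J)$ below $h-j$, which holds because $x_1,\dots,x_j$ were part of a height-$h$ system inside $I$.
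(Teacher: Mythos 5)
There is a genuine gap, and it is exactly at the point you flag as the ``main obstacle'': your conditions on $y$ are the wrong ones, and no amount of patching will make them work. First, the conditions are internally inconsistent: you ask for $y\in I$ yet $y$ outside every height-$h$ minimal prime of $I$, but every element of $I$ lies in every minimal prime of $I$. Second, if you keep $y\in I$ then (iii) fails outright, since $yR+I=I$ has height $h$, not $h+1$. Third, if you instead drop $y\in I$ and (as in your proposed ``fix'') choose $y$ avoiding \emph{all} height-$h$ minimal primes of $\mathfrak{b}=(x_1,\dots,x_h)R$, then (iii) holds but (i) breaks: $V(yR\cap I)=V(y)\cup V(I)$ (note you conflate this with $V(y)\cap V(I)=V(yR+I)$ at one point), and the natural $h$-element candidate $yx_1,\dots,yx_h$ cuts out $V(y)\cup V(\mathfrak{b})$, which strictly contains $V(y)\cup V(I)$ whenever some minimal prime $Q$ of $\mathfrak{b}$ contains neither $y$ nor $I$ --- which is exactly what your choice of $y$ arranges.

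The missing idea is to choose $y$ \emph{inside} the bad components rather than outside them. Partition the minimal primes of $\mathfrak{b}$ (all of height $h$, as in your setup) into $P_1,\dots,P_t$ containing $I$ and $Q_1,\dots,Q_s$ not containing $I$, and take $y\in Q_1\cap\cdots\cap Q_s$ with $y\notin P_1\cup\cdots\cup P_t$ (possible since minimal primes of the same ideal are incomparable; take $y=1$ if $s=0$). Then $yR\cap I\subseteq \bigcap_i P_i\cap\bigcap_j Q_j=\sqrt{\mathfrak{b}}$, because $I$ lies in every $P_i$ and $y$ lies in every $Q_j$; combined with $\mathfrak{b}\subseteq I$ this sandwiches $yR\cap I$ between $yR\cap\mathfrak{b}$ and $yR\cap\sqrt{\mathfrak{b}}$, so $\sqrt{yR\cap I}=\sqrt{(yx_1,\dots,yx_h)R}$, giving (i), and the same computation modulo $J$ gives (ii) with the generators $yx_{j+1},\dots,yx_h$. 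Condition (iii) then follows as in your argument, since every height-$h$ prime containing $yR+I$ would have to be one of the $P_i$ (which miss $y$) or one of the $Q_j$ (which miss $I$). So (iii) is recoverable from your outline, but (i) and (ii) require reversing your prime-avoidance choice; the local-to-global patching you suggest as an alternative is not needed and would not by itself produce the single global $y$ the lemma demands.
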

\begin{proof}
Write $J=(x_1,\cdots,x_j)R$, and extend this sequence to
  $x_1,\cdots,x_h\in I$ generating an ideal of height $h$. $I$ is contained in at least one minimal prime of $(x_1,\cdots,x_h)R$. 

Let $P_1,\cdots,P_t$ be the minimal primes of $(x_1,\cdots,x_h)R$ containing $I$, and $Q_1,\cdots,Q_s$ be the minimal primes of $(x_1,\cdots,x_h)R$ that do not. We may have $s=0$. Since these primes are pairwise incomparable, there exist elements $y\in Q_1\cap\cdots\cap Q_s$ that avoid the union $P_1\cup\cdots\cup P_t$ (if $s=0$, we can take $y=1$). For any such $y$, it holds that $$yR \cap I\subseteq P_1\cap\cdots\cap P_t\cap Q_1\cap\cdots\cap Q_s=\sqrt{(x_1,\cdots,x_h)R}$$
and thus $yR\cap I\subseteq yR\cap \sqrt{(x_1,\cdots,x_h)R}$.
Since $(x_{1},\cdots,x_h) \subseteq I$, we see that
    $$
    yR \cap (x_{1},\cdots,x_h)R \subseteq yR \cap I \subseteq yR \cap \sqrt{(x_{1},\cdots,x_h)R}
    $$
    It follows at once that that $$\sqrt{yR\cap I}=\sqrt{yR\cap(x_{1},\cdots,x_h)R}=\sqrt{(yx_{1},\cdots,yx_h)R}$$ producing the desired bound on arithmetic rank: $\text{ara}_R(yR \cap I)\leq h$. Since $yR/J\cap I/J\subseteq \sqrt{(x_{j+1}\cdots,x_h)R/J}$, an identical argument to the above shows that
    $$\sqrt{yR/J\cap I/J} = \sqrt{(yx_{j+1},\cdots,yx_h)R/J}$$ so $\text{ara}_{R/J}(yR/J\cap I/J)\leq h-j$. We have established (i) and (ii). Concerning (iii), note that that all primes containing $yR+I$ also contain $(x_1,\cdots,x_h)R$, and thus, to show that $\text{ht}(yR+I)\geq h+1$, it is enough to show that none of the height $h$ primes containing $(x_1,\cdots,x_h)R$ appear in $V(yR+I)$.
    But this is clear, since $\{P\supseteq (x_1,\cdots,x_h)R\hspace{0.1cm}|\hspace{0.1cm} \text{ht}(P)=h\} =\{P_1,\cdots,P_t,Q_1,\cdots,Q_s\}$. None of the primes $P_i$ contain $yR$, and none of the
    primes $Q_j$ contain $I$.
\end{proof}
The following is a generalization of Hellus's isomorphism \cite[Theorem 3]{hellus}. We significantly relax the hypotheses (originally $R$ was assumed to be a Cohen-Macaulay local ring) and obtain an isomorphism of functors (originally the isomorphism was of modules $H^i_{I^\prime}(R)\to H^i_I(R)$).
\begin{theor}\label{mainj}
  Let $R$ be a Noetherian ring, let $I$ be an ideal of height $h$, and let $J\subseteq I$
  be an ideal of height $j\geq 0$ generated by $j$ elements. For any $k\geq 0$, there is an ideal $I_{k,J}\supseteq I$ such that
  \begin{itemize}
  \item $\text{ht}(I_{k,J}) \geq \text{ht}(I)+k$
  \item The natural transformation
    \begin{center}
      \begin{tikzcd}[column sep = 2.0em]
        H^i_{I_{k,J}}(-)\ar[r] & H^i_{I}(-)
      \end{tikzcd}
    \end{center}
    is an isomorphism on $R$-modules for all $i>h+k$, and an isomorphism on $R/J$-modules for
    all $i>h-j+k$. If $i=h+k$ (resp. $i=h-j+k$) this natural
    transformation is a surjection on $R$-modules (resp. $R/J$-modules).
  \end{itemize}
\end{theor}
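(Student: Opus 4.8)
The plan is to induct on $k$, establishing the (slightly sharper) statement that there is an ideal $I_{k,J}\supseteq I$ with $\text{ht}(I_{k,J})\geq h+k$ for which the natural transformation $H^i_{I_{k,J}}(-)\to H^i_I(-)$ induced by the inclusion $I\subseteq I_{k,J}$ is an isomorphism on $R$-modules for $i>h+k$ and a surjection for $i=h+k$, and an isomorphism on $R/J$-modules for $i>h-j+k$ and a surjection for $i=h-j+k$. The base case $k=0$ is immediate: take $I_{0,J}=I$, so that the natural transformation is the identity.

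For the inductive step, suppose $I_{k,J}$ has been built, and split into two cases according to the integer $\text{ht}(I_{k,J})\geq h+k$. If $\text{ht}(I_{k,J})\geq h+k+1$, simply put $I_{k+1,J}=I_{k,J}$: the height bound is clear, and an isomorphism valid for all $i>h+k$ is in particular an isomorphism for $i>h+k+1$ and an isomorphism---hence a surjection---at $i=h+k+1$; the analogous remark handles the $R/J$-statement. If instead $\text{ht}(I_{k,J})=h+k$, apply Lemma \ref{chooseelt} to the pair $(I_{k,J},J)$. This is legitimate since $J\subseteq I\subseteq I_{k,J}$, since $J$ is generated by $j=\text{ht}(J)$ elements, and since $j\leq h\leq h+k=\text{ht}(I_{k,J})$; it produces $y\in R$ with $\text{ara}_R(yR\cap I_{k,J})\leq h+k$, with $\text{ara}_{R/J}(y(R/J)\cap (I_{k,J}/J))\leq h+k-j$, and with $\text{ht}(yR+I_{k,J})\geq h+k+1$. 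Put $I_{k+1,J}=yR+I_{k,J}$, whose height is $\geq h+k+1$ by the last inequality.

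The cohomological comparison in this second case comes from the Mayer--Vietoris sequence of the pair $(yR,I_{k,J})$. Since $H^i_{yR}(-)=0$ for $i\geq 2$, this sequence yields for each $i\geq 2$ an exact sequence of functors
\[
H^{i-1}_{yR\cap I_{k,J}}(-)\;\longrightarrow\; H^i_{I_{k+1,J}}(-)\;\longrightarrow\; H^i_{I_{k,J}}(-)\;\longrightarrow\; H^i_{yR\cap I_{k,J}}(-),
\]
in which the middle arrow is the natural transformation coming from $I_{k,J}\subseteq I_{k+1,J}$. Because $H^{\ell}_{yR\cap I_{k,J}}(-)=0$ for $\ell>\text{ara}_R(yR\cap I_{k,J})$, hence for $\ell\geq h+k+1$, the arrow $H^i_{I_{k+1,J}}(-)\to H^i_{I_{k,J}}(-)$ is an isomorphism for $i>h+k+1$ and a surjection for $i\geq h+k+1$. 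Composing with the transformation $H^i_{I_{k,J}}(-)\to H^i_I(-)$ supplied by the inductive hypothesis (an isomorphism for $i>h+k$, a surjection at $i=h+k$)---and noting that the composite is the inclusion-induced transformation $H^i_{I_{k+1,J}}(-)\to H^i_I(-)$, by functoriality of these maps in the ideal---one obtains an isomorphism for $i>h+k+1$ and a surjection at $i=h+k+1$, which is exactly the level-$(k+1)$ claim for $R$-modules. Repeating the same Mayer--Vietoris computation over $R/J$ (using that the local cohomology of an $R/J$-module supported along an ideal $\mathfrak{a}$ agrees with its local cohomology computed over $R/J$ with support along $\mathfrak{a}(R/J)$, and using the bound $\text{ara}_{R/J}(y(R/J)\cap (I_{k,J}/J))\leq h-j+k$ in place of the previous one) yields the level-$(k+1)$ claim for $R/J$-modules, with $h+k$ replaced by $h-j+k$. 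This closes the induction.

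The one real difficulty---and the reason for the case distinction---is that Lemma \ref{chooseelt} guarantees only $\text{ht}(yR+I_{k,J})\geq h+k+1$ and not equality; over a reducible or non-catenary base the height can genuinely overshoot, so a naive ``add one generator at each of the first $k$ steps'' recursion would degrade the isomorphism range at later steps. The dichotomy circumvents this cleanly: the exact equality $\text{ht}(I_{k,J})=h+k$ is needed only in order to invoke Lemma \ref{chooseelt} (which is phrased for an ideal of a prescribed height), and in that branch it holds by assumption, whereas in the complementary branch adding a generator buys nothing and the previously constructed ideal already works. A minor point to check is the low-degree behaviour when $h+k\leq 1$ (so that $H^1_{yR}(-)$ need not vanish): there the asserted surjection at $i=h+k$ is recovered by composing the Mayer--Vietoris map with the projection onto the $H^{\bullet}_{I_{k,J}}$-summand.
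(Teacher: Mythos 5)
Your proof is correct and follows essentially the same route as the paper's: induction on $k$ with the same dichotomy on $\text{ht}(I_{k,J})$, the same application of Lemma \ref{chooseelt}, and the same Mayer--Vietoris vanishing argument (including the observation that surjectivity at the boundary degree only needs vanishing of $H^i_{yR\cap I_{k,J}}(-)$, not of $H^i_{yR}(-)$). The only cosmetic differences are the index shift in the induction and your explicit composition with the inductively obtained transformation $H^i_{I_{k,J}}(-)\to H^i_I(-)$, which the paper leaves implicit.
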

\begin{proof}
  If $k=0$, choose $I_{0,J}=I$. Fix $k\geq 1$, and suppose that we've
  chosen the ideal $I_{k-1,J}$ by induction. We must choose $I_{k,J}$. For brevity, we will suppress
  $J$ from our notation, and write $I_{k-1}$ and $I_k$ for $I_{k-1,J}$ and $I_{k,J}$, respectively.

If $\text{ht}(I_{k-1}) > h + k-1$ we can simply pick $I_{k} = I_{k-1}$, so assume that $\text{ht}(I_{k-1}) = h + k-1$. By Lemma \ref{chooseelt} there is an element $y\in R$ such that $\text{ht}(yR+I_{k-1})\geq (h+k-1)+1$, with
  $$\text{ara}_R(yR\cap I_{k-1})\leq h+k-1\hspace{0.5em}\text{and}\hspace{0.5em} \text{ara}_{R/J}(y(R/J)\cap I_{k-1}/J)\leq (h+k-1)-j$$
 Consider the Mayer-Vietoris sequence on the intersection $yR\cap I_{k-1}$. We use $(-)$ in our notation to mean that the sequence is exact when $-$ is replaced by any $R$-module $M$, and that all maps in the sequence are given by natural transformations.
\begin{center}{
\begin{tikzpicture}[descr/.style={fill=white,inner sep=1.5pt}]
        \matrix (m) [
            matrix of math nodes,
            row sep=1em,
            column sep=2.5em,
            text height=1.5ex, text depth=0.25ex
        ]
        { &  & \cdots & H^{i-1}_{yR\cap {I_{k-1}}}(-) \\
            & H^{i}_{yR+{I_{k-1}}}(-) & H^{i}_{yR}(-)\oplus H^i_{{I_{k-1}}}(-) & H^{i}_{yR\cap {I_{k-1}}}(-) \\
        };

        \path[overlay,->, font=\scriptsize,>=latex]

        (m-1-3) edge (m-1-4)
        (m-1-4) edge[out=350,in=170] (m-2-2)
        (m-2-2) edge (m-2-3)
        (m-2-3) edge (m-2-4)
;
\end{tikzpicture}}
\end{center}
Let $i>h+k$. Since $i-1>\text{ara}_{R}(yR\cap I_{k-1})$, we get vanishing $H^{i-1}_{yR\cap I_{k-1}}(-)=H^i_{yR\cap I_{k-1}}(-)=0$. Since $i\geq h+k+1\geq 2$, we also have $H^i_{yR}(-)=0$, and therefore obtain a natural isomorphism $H^i_{yR+I_{k-1}}(-)\xrightarrow{\sim} H^i_{I_{k-1}}(-)$. Notice that if $i=h+k$, then we still have $H^i_{yR\cap I_{k-1}}(-)=0$, so
$$H^i_{yR+I_{k-1}}(-)\to H^i_{yR}(-)\oplus H^i_{I_{k-1}}(-)\to 0$$
is exact, implying that the component map $H^i_{yR+I_{k-1}}(-)\to H^i_{I_k}(-)$ is surjective. Working with $R/J$-modules, an identical argument using the fact that $$\text{ara}_{R/J}(y(R/J)\cap I_{k-1}/J)\leq (h+k-1)-j$$ shows that
\[H^i_{y(R/J)+I_{k-1}/J}(-)\xrightarrow{\sim} H^i_{I_{k-1}/J}(-)\] when $i>h+k-j$ and
\[H^i_{y(R/J)+I_{k-1}/J}(-)\twoheadrightarrow H^i_{I_{k-1}/J}(-)\] when $i=h+k-j$. Finally,
$\text{ht}(yR + I_{k-1})\geq h+k$, so we may in fact choose $I_k = yR + I_{k-1}$, which completes the induction.
\end{proof}
\begin{corol}\label{genhel}
Let $R$ be a Noetherian ring and let $I\subseteq R$ be any ideal. Fix $i\geq 0$. There is an ideal $I^\prime\supseteq I$ (resp. $I^{\prime\prime}\supseteq I$) such that
\begin{itemize}
\item $\text{ht}(I^\prime)\geq i-1$ (resp. $\text{ht}(I^{\prime\prime})\geq i$)
\item $H^i_{I^\prime}(-) \xrightarrow{\sim} H^i_{I}(-)$ (resp. $H^i_{I^{\prime\prime}}(-) \twoheadrightarrow H^i_{I}(-)$)
\end{itemize}
\end{corol}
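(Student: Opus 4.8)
The plan is to deduce Corollary~\ref{genhel} directly from Theorem~\ref{mainj} by specializing $J$ to the zero ideal. Since $(0)$ has height $0$ and is ``generated by'' the empty sequence, this is exactly the case $j=0$ of Theorem~\ref{mainj}: writing $h=\text{ht}(I)$, the theorem produces, for every $k\geq 0$, an ideal $I_{k,(0)}\supseteq I$ with $\text{ht}(I_{k,(0)})\geq h+k$ together with a natural transformation $H^{i'}_{I_{k,(0)}}(-)\to H^{i'}_I(-)$ that is an isomorphism of functors for all $i'>h+k$ and a surjection when $i'=h+k$. All that remains is to choose $k$ so that the prescribed degree $i$ lands at the correct place and the height bound comes out as claimed; there is no real obstacle, as the entire content has been absorbed into Theorem~\ref{mainj} and what is left is bookkeeping of indices plus a degenerate case.

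For the isomorphism statement, I would first dispose of the case $\text{ht}(I)\geq i-1$: here I take $I^\prime=I$ and let the natural transformation be the identity, which is trivially a natural isomorphism and trivially satisfies the height bound. Otherwise $h<i-1$, so $k:=i-1-h$ is a positive integer; set $I^\prime:=I_{k,(0)}$. Then $\text{ht}(I^\prime)\geq h+k=i-1$, and because $i>h+k$, Theorem~\ref{mainj} tells us that $H^i_{I^\prime}(-)\to H^i_I(-)$ is an isomorphism of functors, as desired.

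For the surjection statement, I would argue in the same way: if $\text{ht}(I)\geq i$, take $I^{\prime\prime}=I$ with the identity transformation. Otherwise $h<i$, so $k:=i-h\geq 1$; set $I^{\prime\prime}:=I_{k,(0)}$. Then $\text{ht}(I^{\prime\prime})\geq h+k=i$, and since now $i=h+k$ exactly, Theorem~\ref{mainj} gives that $H^i_{I^{\prime\prime}}(-)\to H^i_I(-)$ is a natural surjection.

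The only point meriting a word of care is the separation of the degenerate case where $\text{ht}(I)$ already exceeds the target value, in which the integer $k$ dictated by the threshold condition would be negative (or zero); there the Mayer--Vietoris induction underlying Theorem~\ref{mainj} contributes nothing new, and one simply takes $I^\prime=I$ (respectively $I^{\prime\prime}=I$). Beyond that, the argument is immediate, so I expect no genuine difficulty.
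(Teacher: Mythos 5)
Your proposal is correct and follows essentially the same route as the paper: dispose of the case $\text{ht}(I)\geq i-1$ (resp.\ $\geq i$) by taking $I'=I$ (resp.\ $I''=I$), and otherwise apply Theorem \ref{mainj} with $k=i-1-h$ (resp.\ $k=i-h$), reading off the isomorphism from the range $i>h+k$ and the surjection from the boundary case $i=h+k$. Your explicit choice of $J=(0)$ is a harmless specialization consistent with the conventions preceding Lemma \ref{chooseelt}.
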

\begin{proof}
  Let $h=\text{ht}(I)$. If $h\geq i-1$ (resp. $h\geq i$) simply choose $I^\prime=I$ (resp. $I^{\prime\prime}=I$). Otherwise, $h<i-1$ (resp. $h < i$). Apply Theorem \ref{mainj} in the case $k=i-1-h$ (resp. $k^\prime=i-h$) to obtain an ideal $I^\prime\supseteq I$ (resp. $I^{\prime\prime}\supseteq I$) satisfying $\text{ht}(I^\prime)\geq h+k=i-1$ (resp. $\text{ht}(I^{\prime\prime})\geq h+k^\prime=i$) and $H^i_{I^\prime}(-)\xrightarrow{\sim}H^i_I(-)$, since $i>h+k$ (resp. $H^i_{I^{\prime\prime}}(-)\twoheadrightarrow H^i_I(-)$, since $i=h+k^\prime$). 
\end{proof}

An immediate application of this theorem is to generalize a corollary of Hellus \cite[Corollary 2]{hellus}. This generalization provides a new proof of a result of Marley \cite[Proposition 2.3]{marley}, namely, for any Noetherian ring $R$, any ideal $I\subseteq R$, and any $R$-module $M$, $\{P\in\text{Supp}\, H^i_I(M)\,|\,\text{ht}(P)=i\}$ is a finite set. Since our result comes from a surjection of functors, we will describe it in terms of the ``support'' of $H^i_I(-)$.

By the \textit{support} of a functor $F:\text{Mod}_R\to\text{Mod}_R$, we mean the set of primes $P\in\text{Spec}(R)$ such that $F(-)_P$ is
not the zero functor. That is to say,
$$
\text{Supp}(F) := \{P\in\text{Spec}(R)\,|\, \exists M\in\text{Mod}_R \text{ such that } F(M)_P \neq 0\}
$$
For example, if $I\subseteq R$ is an ideal and $i\geq 0$, then $\text{Supp}\, H^i_I(-)\subseteq V(I)$. If
$i>\text{ht}(I)$, this inclusion is not sharp. The following Corollary shows us how to find a
strictly smaller closed set containing $\text{Supp}\, H^i_I(-)$.
\begin{corol}\label{heightcontrol} (cf. Marley \cite[Proposition 2.3]{marley})
  Let $R$ be a Noetherian ring and $I$ be an ideal. Then for all $i\geq 0$, there is an ideal $I^{\prime\prime}\supseteq I$ with $\text{ht}(I^{\prime\prime})\geq i$ such that $\text{Supp}\, H^i_I(-)\subseteq V(I^{\prime\prime})$. In particular, for any $R$-module $M$, the set
  $$\{P\in\text{Supp}\, H^i_I(M)\,\,|\,\, \text{ht}(P)= i\}$$
  is a subset of $\text{Min}_R(R/I^{\prime\prime})$, and is therefore finite. If $R$ is semilocal and $i\geq \dim(R)-1$, then $\text{Supp}\, H^i_I(M)$ is a finite set.
\end{corol}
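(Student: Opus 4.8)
The plan is to deduce everything from Corollary~\ref{genhel}. First I would establish the main inclusion. Apply the ``resp.'' half of Corollary~\ref{genhel}: for the given $i\geq 0$ there is an ideal $I^{\prime\prime}\supseteq I$ with $\text{ht}(I^{\prime\prime})\geq i$ together with a natural surjection $H^i_{I^{\prime\prime}}(-)\twoheadrightarrow H^i_I(-)$. Surjectivity of the component map $H^i_{I^{\prime\prime}}(M)\to H^i_I(M)$ forces $H^i_I(M)_P$ to be a quotient of $H^i_{I^{\prime\prime}}(M)_P$ for every prime $P$ (localization is exact), so $H^i_I(M)_P\neq 0$ implies $H^i_{I^{\prime\prime}}(M)_P\neq 0$; since $\text{Supp}\, H^i_{I^{\prime\prime}}(M)\subseteq V(I^{\prime\prime})$ always holds, we get $\text{Supp}\, H^i_I(-)\subseteq V(I^{\prime\prime})$. (One could phrase this directly in terms of the support of the functor: a natural surjection $F\twoheadrightarrow G$ gives $\text{Supp}(G)\subseteq\text{Supp}(F)$.)

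Next I would read off the statement about height-$i$ primes. Let $M$ be any $R$-module and suppose $P\in\text{Supp}\, H^i_I(M)$ with $\text{ht}(P)=i$. By the inclusion just proved, $P\in V(I^{\prime\prime})$, i.e.\ $P\supseteq I^{\prime\prime}$. But $\text{ht}(I^{\prime\prime})\geq i=\text{ht}(P)$, so $P$ cannot properly contain a prime containing $I^{\prime\prime}$; hence $P$ is a minimal prime of $I^{\prime\prime}$, that is, $P\in\text{Min}_R(R/I^{\prime\prime})$. Since $R$ is Noetherian, $\text{Min}_R(R/I^{\prime\prime})$ is finite, so the set $\{P\in\text{Supp}\, H^i_I(M)\mid \text{ht}(P)=i\}$ is finite.

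Finally, for the semilocal case with $i\geq\dim(R)-1$: every $P\in\text{Supp}\, H^i_I(M)\subseteq V(I^{\prime\prime})$ has $\text{ht}(P)\geq\text{ht}(I^{\prime\prime})\geq i\geq \dim(R)-1$, so $\text{ht}(P)$ is either $\dim(R)-1$ or $\dim(R)$. The primes of height $\dim(R)$ lying in $V(I^{\prime\prime})$ are among the finitely many maximal ideals of the semilocal ring $R$, hence form a finite set; the primes of height exactly $i$ in the support form a finite set by the previous paragraph; and if $i=\dim(R)$ there is nothing more to check. In all cases $\text{Supp}\, H^i_I(M)$ is a union of finitely many finite sets, hence finite. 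I do not expect a genuine obstacle here—the content has been front-loaded into Theorem~\ref{mainj} and Corollary~\ref{genhel}; the only point requiring a little care is the bookkeeping in the semilocal argument, making sure the height stratification of $V(I^{\prime\prime})$ only meets the two top strata and that the top stratum consists of maximal ideals.
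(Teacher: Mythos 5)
Your proof is correct and follows essentially the same route as the paper's: invoke the surjection half of Corollary~\ref{genhel} to get $I^{\prime\prime}$, deduce $\text{Supp}\, H^i_I(-)\subseteq V(I^{\prime\prime})$ from the surjectivity, and observe that height-$i$ primes of $V(I^{\prime\prime})$ must be minimal primes of $I^{\prime\prime}$. Your explicit handling of the semilocal case (height $\dim(R)$ primes are maximal, hence finitely many) is in fact spelled out more carefully than in the paper, which leaves that final claim implicit.
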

\begin{proof}
  Fix $i\geq 0$ and write $h=\text{ht}(I)$. If $i<h$, then because $\text{Supp}\, H^i_I(-)\subseteq V(I)$, we already have $\text{ht}(P) \geq h>i$ for all $P\in \text{Supp}\, H^i_I(-)$ and there is nothing to prove.
  So assume that $i\geq h$. By Corollary \ref{genhel}, there is an ideal
  $I^{\prime\prime}\supseteq I$ such that $\text{ht}(I^{\prime\prime})\geq i$ and $H^i_{I^{\prime\prime}}(-)\twoheadrightarrow H^i_I(-)$. In particular, for any $R$-module $M$, $H^i_I(M)$ is $I^{\prime\prime}$-torsion, and thus $\text{Supp}\, H^i_I(M)\subseteq V(I^{\prime\prime})$. All primes in $V(I^{\prime\prime})$ have height at least $i$. Any primes of height exactly $i$ must be
  among the minimal primes of $I^{\prime\prime}$, of which there are only finitely many.
\end{proof}

\subsection{Comparing the local cohomology of $J$ and $R/J$}
Of primary interest within the scope of this paper is the following application of Theorem \ref{mainj} to the setting of Question 3.
\begin{corol}\label{reduction}
  Let $R$ be a Cohen-Macaulay ring, and let $J\subseteq R$ be an ideal generated by a regular sequence of length $j\geq 1$. Fix a nonnegative integer $i$, and let $I$ be an ideal containing $J$ such that $\text{depth}_{I}(R)\leq i-1$. Then there is an ideal $I^\prime\supseteq I$ such that $\text{depth}_{I^\prime}(R)\geq i-1$ and such that $H^i_{I^\prime}(J)\simeq H^i_{I}(J)$ and $H^{i-1}_{I^\prime}(R/J)\simeq H^{i-1}_I(R/J)$.
\end{corol}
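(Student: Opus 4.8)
The plan is to deduce the statement directly from Theorem \ref{mainj}; the only real work is translating between depth and height. Since $R$ is Cohen-Macaulay, $\text{depth}_{\mathfrak a}(R)=\text{ht}(\mathfrak a)$ for every proper ideal $\mathfrak a\subseteq R$ (and $\text{depth}_I(R)\leq i-1<\infty$ forces $I$ to be proper). So the hypothesis $\text{depth}_I(R)\leq i-1$ says precisely that $h:=\text{ht}(I)\leq i-1$, and to obtain the conclusion $\text{depth}_{I^\prime}(R)\geq i-1$ it suffices to produce $I^\prime\supseteq I$ with $\text{ht}(I^\prime)\geq i-1$. I would also record that $J$ has height exactly $j$: the generating regular sequence of length $j$ gives $\text{depth}_J(R)\geq j$, while $J$ is generated by $j$ elements, so $j\leq\text{depth}_J(R)=\text{ht}(J)\leq j$. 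Thus $J\subseteq I$ satisfies the hypotheses of Theorem \ref{mainj} with its integer ``$j$'' equal to the genuine height of $J$.

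Next I would apply Theorem \ref{mainj} with $k:=i-1-h$, which is a nonnegative integer precisely because $h\leq i-1$. This produces an ideal $I^\prime:=I_{k,J}\supseteq I$ with $\text{ht}(I^\prime)\geq h+k=i-1$, hence $\text{depth}_{I^\prime}(R)=\text{ht}(I^\prime)\geq i-1$, as needed.

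It then remains to extract the two isomorphisms. By Theorem \ref{mainj}, for each cohomological degree $n$ the natural transformation $H^n_{I^\prime}(-)\to H^n_I(-)$ is an isomorphism on $R$-modules when $n>h+k=i-1$ and an isomorphism on $R/J$-modules when $n>h-j+k=i-1-j$. Taking $n=i$ and evaluating at the $R$-module $J$ gives $H^i_{I^\prime}(J)\simeq H^i_I(J)$, since $i>i-1$. Taking $n=i-1$ and evaluating at $R/J$, which is an $R/J$-module, gives $H^{i-1}_{I^\prime}(R/J)\simeq H^{i-1}_I(R/J)$, since $i-1>i-1-j$ as $j\geq 1$.

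I do not expect a genuine obstacle here: all the substance is contained in Theorem \ref{mainj}, and what remains is bookkeeping with the numerology. The one point worth flagging is that the hypothesis $j\geq 1$ is exactly what makes $i-1$ strictly larger than $h-j+k=i-1-j$; if $j$ were $0$, Theorem \ref{mainj} would only give a surjection $H^{i-1}_{I^\prime}(R/J)\twoheadrightarrow H^{i-1}_I(R/J)$ in that degree rather than an isomorphism. The borderline case $h=i-1$ (i.e. $k=0$) is subsumed with no special treatment, since then Theorem \ref{mainj} simply returns $I^\prime=I$.
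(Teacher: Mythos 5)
Your proof is correct and follows essentially the same route as the paper: apply Theorem \ref{mainj} with $k=i-1-h$, use the Cohen--Macaulay hypothesis to identify depth with height, and read off the two isomorphisms from the degree bounds $i>h+k$ and $i-1>h-j+k$. The extra bookkeeping you include (that $\mathrm{ht}(J)=j$ and that $j\geq 1$ is what upgrades the $R/J$-module statement from a surjection to an isomorphism) is accurate and matches the paper's implicit reasoning.
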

\begin{proof}
  Write $h=\text{ht}(I)$ and $j=\text{ht}(J)$.
  Apply Theorem \ref{mainj} with $k=i-1-h$ to obtain $I^\prime\supseteq I$ such that
  $\text{depth}_{I^\prime}(R) = \text{ht}(I^\prime) \geq i-1$ and such that the natural transformation
  $H^\ell_{I^\prime}(-)\to H^\ell_{I}(-)$ is an isomorphism on $R$-modules whenever $\ell>i-1$ and on $R/J$-modules whenever $\ell>i-1-j$. In particular, we see that $H^i_{I^\prime}(-)\to H^i_{I}(-)$ is an isomorphism on $R$-modules $H^{i-1}_{I^\prime}(-)\to H^{i-1}_I(-)$ is an isomorphism on $R/J$-modules.
\end{proof}

Assume that $R$ is Cohen-Macaulay, $J$ is generated by a regular sequence of length $j\geq 2$, and $I\supseteq J$ is any ideal. Fix $i\geq 0$ and let $a=\text{depth}_I(R/J)=\text{depth}_I(R)-j$. If $a+j\leq i-1$, then by Corollary \ref{reduction}, we can replace $I$ with a possibly larger ideal in order to assume that $a+j\geq i-1$, without affecting $H^i_I(J)$ and $H^{i-1}_I(R/J)$. Lemma \ref{largedep} gives a positive answer to Question 3 if $a+j>i-1$, so we may assume that $a+j=i-1$. Note that this allows us to ignore all values of $i$ and $j$ for which $j>i-1$.  Here is a table illustrating the relevant values of $a$ to consider for various small values of $i$ and $j$.
\begin{center}
  \begin{tabular}{l|| c| c| c| c| c }    
         & $i=3$         & $i=4$          & $i=5$         & $i=6$         &  $i=7$\\\hline\hline
    $j=2$& $a=0$         &  $a=1$         & $a=2$         & $a=3$         &  $a=4$\\\hline
    $j=3$& $\varnothing$ &  $a=0$         & $a=1$         & $a=2$         &  $a=3$\\\hline
    $j=4$& $\varnothing$ &  $\varnothing$ & $a=0$         & $a=1$         &  $a=2$\\\hline
    $j=5$& $\varnothing$ &  $\varnothing$ & $\varnothing$ & $a=0$         &  $a=1$\\\hline
    $j=6$& $\varnothing$ &  $\varnothing$ & $\varnothing$ & $\varnothing$ &  $a=0$\\\hline
    $j=7$& $\varnothing$ &  $\varnothing$ & $\varnothing$ & $\varnothing$ &  $\varnothing$\\
\end{tabular}
\end{center}

\noindent We will attack the cases $a=0$ and $a=1$ directly, and the next lemma is our main tool in doing so.

\begin{lem}\label{prinq3}
  Fix $a\geq 0$. Let $R$ be a Noetherian ring, let $J\subseteq R$ be an ideal generated by a regular sequence of length $j\geq 2-a$, let $I$ be an ideal containing $J$, and let $S=R/J$. Suppose that $IS$ can be decomposed (up to radicals) as $yS\cap I_0$ with $\text{depth}_{I_0}(S)> a$. Suppose further that
  $\text{Ass}\, H^{j+a+1}_I(R)$ is finite. Then $\text{Ass}\, H^{j+a+1}_I(J)$ is finite if and only if $\text{Ass}\, H^{j+a}_I(S)$ is finite.
\end{lem}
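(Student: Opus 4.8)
The strategy is to reduce to the setting of Corollary \ref{depth1coh} by exploiting the decomposition $IS = yS \cap I_0$ (up to radical) together with the long exact sequence relating $H^\bullet_I(J)$, $H^\bullet_I(R)$, and $H^\bullet_I(S)$. First I would pass from $I$ to the ideal $J + (\text{preimage of } yS \cap I_0)$, observing that since $J \subseteq I$, the ideal $IS$ determines $H^\bullet_I(S)$ and, by the long exact sequence of $0 \to J \to R \to S \to 0$, changing $I$ to an ideal with the same image in $S$ and the same radical changes nothing relevant. So without loss of generality $I = I' \cap (J + \text{(preimage of }I_0\text{)})$, whose image in $S$ is $yS \cap I_0$ up to radical. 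Applying Corollary \ref{depth1coh} over the ring $S$ with the element $y$ and the ideal $I_0$ (which has $\text{depth}_{I_0}(S) > a \geq a$, so in particular $\geq a+1$ when we need it), we get $H^{\ell}_{IS}(S\text{-module}) \simeq H^{\ell}_{I_0}(S\text{-module})_y$ for all $\ell \geq 2$; in particular $H^{j+a}_I(S) \simeq H^{j+a}_{I_0}(S)_y$ and $H^{j+a+1}_I(S) \simeq H^{j+a+1}_{I_0}(S)_y$ (here $j + a \geq 2$ since $j \geq 2-a$ forces $j+a \geq 2$).

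The second step is to invoke Lemma \ref{largedep}: since $\text{depth}_{I_0}(S) > a$, we have $\text{depth}_{I_0}(R) = \text{depth}_{I_0}(S) + j > a + j = (j+a+1) - 1$, so over the ring $R$ with the ideal $\widetilde{I_0}$ (the preimage, or rather $J + $ preimage of $I_0$), Lemma \ref{largedep} applies with $i = j+a+1$: given that $\text{Ass}\, H^{j+a+1}_{\widetilde{I_0}}(R)$ is finite, $\text{Ass}\, H^{j+a+1}_{\widetilde{I_0}}(J)$ is finite iff $\text{Ass}\, H^{j+a}_{\widetilde{I_0}}(S)$ is. The point is that localizing at $y$ is a flat operation that can only shrink the set of associated primes and commutes with local cohomology, so finiteness of $\text{Ass}\, H^\bullet_{\widetilde{I_0}}$ transfers to finiteness of $\text{Ass}\, H^\bullet_{\widetilde{I_0}}(-)_y$, and via the isomorphisms of the previous paragraph, to finiteness of $\text{Ass}\, H^\bullet_I(-)$ on the $S$-module side. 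One must also check that $\text{Ass}\, H^{j+a+1}_{\widetilde{I_0}}(R)$ being finite follows from the hypothesis that $\text{Ass}\, H^{j+a+1}_I(R)$ is finite; this is where the surjection/isomorphism structure of Theorem \ref{mainj} or Corollary \ref{depth1coh} (the $y$-localization again) does the work — $H^{j+a+1}_I(R) \simeq H^{j+a+1}_{\widetilde{I_0}}(R)_y$, and the forward direction (finiteness after localization) is automatic, while we actually need it only after localization anyway.

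Assembling: $\text{Ass}\, H^{j+a+1}_I(J)$ finite $\iff$ (via the long exact sequence and Lemma \ref{largedep} over $R$ with ideal $\widetilde{I_0}$, applied \emph{after} localizing at $y$, using $\text{depth}_{\widetilde{I_0}}(R)_y \geq \text{depth}_{\widetilde{I_0}}(R) > j+a$) $\text{Ass}\, H^{j+a}_{I}(S)$ finite. More carefully, I would run Lemma \ref{largedep}'s short exact sequence $0 \to H^{j+a}_I(S) \to H^{j+a+1}_I(J) \to N \to 0$ with $N \subseteq H^{j+a+1}_I(R)$; since $\text{Ass}\, H^{j+a+1}_I(R)$ is finite by hypothesis, $\text{Ass}\, N$ is finite, so $\text{Ass}\, H^{j+a+1}_I(J)$ is finite iff $\text{Ass}\, H^{j+a}_I(S)$ is — \emph{provided} $H^{j+a}_I(R) = 0$ (so that the map $H^{j+a}_I(S) \to H^{j+a+1}_I(J)$ is injective). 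That vanishing is exactly what the depth hypothesis buys after passing to $\widetilde{I_0}$ and localizing: $H^{j+a}_{I}(R) \simeq H^{j+a}_{\widetilde{I_0}}(R)_y$ and $\text{depth}_{\widetilde{I_0}}(R) > j+a$ forces $H^{j+a}_{\widetilde{I_0}}(R) = 0$.

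\textbf{Main obstacle.} The delicate point is the bookkeeping around replacing $I$ by an ideal whose image in $S$ is literally $yS \cap I_0$ (not merely up to radical) and simultaneously controlling $H^\bullet_I(R)$, $H^\bullet_I(J)$, and $H^\bullet_I(S)$ — the decomposition is given only up to radicals in $S$, so one must lift it to $R$ carefully, perhaps working with $I'' := (\text{full preimage of } yS) \cap (\text{full preimage of } I_0)$ and checking that $\sqrt{I''} = \sqrt{I}$, hence $H^\bullet_{I''} = H^\bullet_I$ on all $R$-modules; then the image of $I''$ in $S$ has radical $yS \cap I_0$, and Corollary \ref{depth1coh} (which is a statement up to radical via the Mayer–Vietoris argument in Proposition \ref{loc}) applies. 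Getting this radical/lifting argument airtight, and confirming that $\text{depth}_{I_0}(S) > a$ is enough depth for all the vanishing statements (it gives $\text{depth} \geq a+1$, and we need $H^{j+a}$ and sometimes $H^{j+a+1}$ vanishing of $R$ on $\widetilde{I_0}$, for which $\text{depth}_{\widetilde{I_0}}(R) = j + \text{depth}_{I_0}(S) \geq j + a + 1 = (j+a+1)$ — just barely forcing $H^{j+a}_{\widetilde{I_0}}(R)=0$ but not $H^{j+a+1}$, which is why the hypothesis on $\text{Ass}\, H^{j+a+1}_I(R)$ is needed), is the part that requires genuine care rather than being routine.
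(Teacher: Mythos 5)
Your overall architecture matches the paper's: apply Corollary \ref{depth1coh} over $S$ to the decomposition $IS=yS\cap I_0$, use the hypothesis on $\text{Ass}\, H^{j+a+1}_I(R)$ to control the quotient $N\subseteq H^{j+a+1}_I(R)$ in the long exact sequence, and reduce everything to the injectivity of $H^{j+a}_I(S)\to H^{j+a+1}_I(J)$. But the way you establish that injectivity contains a genuine error. You claim $H^{j+a}_I(R)\simeq H^{j+a}_{\widetilde{I_0}}(R)_y$ via Corollary \ref{depth1coh} applied over $R$, and conclude $H^{j+a}_I(R)=0$ from $\text{depth}_{\widetilde{I_0}}(R)>j+a$. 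This cannot be right: in the main case the lemma is designed for (e.g.\ $a=0$, $j=2$, $\text{depth}_I(R)=2$ in Theorem \ref{smalli}(ii)), we have $\text{depth}_I(R)=j+a$ exactly, so $H^{j+a}_I(R)\neq 0$. The source of the error is that Corollary \ref{depth1coh} requires a decomposition $I=yR\cap I_0$ with $yR$ \emph{principal}; when you lift the decomposition from $S$ to $R$ you get $I=(yR+J)\cap \widetilde{I_0}$ up to radical, and $yR+J$ is not principal, so the Mayer--Vietoris argument underlying Proposition \ref{loc} (which needs $H^i_{yR}(-)=0$ for $i\geq 2$) does not apply. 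What is true is only the weaker statement $\bigl(H^{j+a}_I(R)\bigr)_y\simeq H^{j+a}_{\widetilde{I_0}}(R)_y=0$, i.e.\ $H^{j+a}_I(R)$ is $y$-torsion, not zero.

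The paper's proof threads this needle differently: it does not prove the source of the connecting map vanishes, only that the \emph{map} $\psi:H^{j+a}_I(R)\to H^{j+a}_I(S)$ is zero. The point is that the target $H^{j+a}_I(S)\simeq H^{j+a}_{I_0}(S)_y$ is an $S_y$-module (here the principality of $yS$ inside $S$ is legitimately available), so $\psi$ factors through $S_y\otimes_R H^{j+a}_I(R)$; and that tensor product vanishes precisely because $IR_y=\widetilde{I_0}R_y$ and $\text{depth}_{\widetilde{I_0}}(R)>j+a$. So the $y$-torsion observation you do have access to is exactly enough, but it must be combined with the $S_y$-module structure on the target rather than used to kill $H^{j+a}_I(R)$ outright. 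With that repair (and your radical/lifting bookkeeping, which is fine and matches the paper's), the argument goes through.
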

\begin{proof}
  By Corollary \ref{depth1coh}, there is a natural isomorphism
  $H^i_{I_0}(S)_y\simeq H^i_I(S)$ for all $i\geq 2$, so
  that in particular, $H^{j+a}_I(S)$ is an $S_y$-module. The natural map $\psi:H^{j+a}_I(R)\to H^{j+a}_I(S)$ therefore factors through $H^{j+a}_I(R)\to S_y\otimes_R H^{j+a}_I(R)$ to give an $S_y$-linear map
  $S_y\otimes_R H^{j+a}_I(R)\to H^{j+a}_I(S)$. 
  \begin{center}
    \begin{tikzcd}
      H^{j+a}_I(R)\ar[r,"\psi"]\ar[dr] & H^{j+a}_I(S)\\
      & S_y\otimes_R H^{j+a}_I(R)\ar[u]
    \end{tikzcd}
  \end{center}
  We claim that $\psi=0$, and for this it suffices to show that $S_y\otimes_R H^{j+a}_I(R)=0$.

  Consider the decomposition of $I$ up to radicals as $yS\cap I_0$ in $S$. We can replace $y$ by some
  lift mod $J$ to assume that $y\in R$, and since $I_0$ is expanded
  from $R$, we can write $I_0=I_0^\prime S$ for some ideal $I_0^\prime$ of $R$
  containing $J$. We therefore have $I=(yR+J)\cap I_0^\prime$ in $R$ (after possibly replacing $I$ by an ideal with the same radical). Note that $\text{depth}_{I_0^\prime}(R)> j+a$. We can write $S_y\otimes_R H^{j+a}_I(R) = S_y\otimes_{R_y} H^{j+a}_{IR_y}(R_y)$ where $IR_y = (y + J)R_y \cap I_0^\prime R_y = I_0^\prime R_y$, and thus $H^{j+a}_{I R_y}(R_y) = H^{j+a}_{I_0^\prime}(R)_y$. Since $\text{depth}_{I_0^\prime}(R)>j+a$, we have $H^{j+a}_{I_0^\prime}(R)=0$ and consequently, $\psi=0$.

  We therefore have an exact sequence
  $$
  0\to H^{j+a}_I(S)\to H^{j+a+1}_I(J)\to H^{j+a+1}_I(R).
  $$
  Since $\text{Ass}\,{H^{j+a+1}_I(R)}$ is finite, the claim follows at once.
\end{proof}

We can now prove the main result of this section.

\begin{theor}\label{smalli}
  Let $R$ be an LC-finite regular ring, let $J\subseteq R$ be an ideal generated generated by a regular sequence of length $j\geq 2$, and let $S=R/J$. For any ideal $I$ of $R$ containing $J$,
  \begin{enumerate}
  \item[(i)] $\text{Ass}\, H^i_I(J)$ and $\text{Ass}\, H^{i-1}_I(S)$ are always finite for $i\leq 2$.
  \item[(ii)] If the irreducible components of $\text{Spec}(S)$ are disjoint, then $\text{Ass}\, H^{3}_I(J)$ is finite if and only if $\text{Ass}\,{H^2_I(S)}$ is finite.
  \item[(iii)] If $S$ is normal and locally almost factorial, then $\text{Ass}\,{H^4_I(J)}$ is finite if and only if $\text{Ass}\, H^3_I(S)$ is finite.
  \end{enumerate}
\end{theor}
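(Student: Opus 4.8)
The plan is to reduce each statement to an application of Lemma \ref{prinq3}, after first using Corollary \ref{reduction} to normalize the depth. For part (i), the case $i\le 2$ is handled exactly as in the introduction: $H^1_I(S)$ is finitely generated (true of $H^1_I(M)$ for any finitely generated $M$), and $\text{Ass}\, H^2_I(J)$ is finite by Theorem \ref{H2}, since a regular ring is certainly locally almost factorial and normal; the lower degrees are trivial. So the content is in parts (ii) and (iii).

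For part (ii), fix $i=3$, $J$ generated by a regular sequence of length $j$, and $I\supseteq J$. If $j\ge i-1=2$ we are in the only relevant case $j=2$ (for $j>2$ the module $H^3_I(J)$ relates to $H^2_I(S)$ via Lemma \ref{largedep} after a depth reduction, or the table shows there is nothing to check). By Lemma \ref{largedep} we may assume $\text{depth}_I(R)\le i-1=2$, and by Corollary \ref{reduction} we may enlarge $I$ so that $\text{depth}_I(R)=\text{ht}(I)=i-1=2$ without changing $H^3_I(J)$ or $H^2_I(S)$; since $R$ is LC-finite, $\text{Ass}\, H^3_I(R)$ is finite. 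Thus $a=\text{depth}_I(S)=\text{depth}_I(R)-j=2-2=0$, and we want to apply Lemma \ref{prinq3} with $a=0$, $j=2$, so that $j+a+1=3$. The hypothesis to verify is that $IS$ decomposes up to radicals as $yS\cap I_0$ with $\text{depth}_{I_0}(S)>0$. Here $S$ is Cohen--Macaulay (a regular ring modulo a regular sequence) with disjoint irreducible components, i.e. $S=S_1\times\cdots\times S_k$ is a product of domains. On each factor, $\text{depth}_{IS_m}(S_m)$ is either $0$ (if $IS_m$ is contained in a minimal prime, i.e. $IS_m=0$ since $S_m$ is a domain — but then that factor contributes nothing) or $\ge 1$ by the usual height argument, in which case we may take $y=1$, $I_0=IS_m$. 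Assembling over the factors and passing to a finite affine cover handled factor-by-factor gives the decomposition; Lemma \ref{prinq3} then yields the equivalence. (One must check the finiteness statements behave well under the finite product decomposition and the localization $H^2_{I_0}(S)_y\simeq H^2_I(S)$ of Corollary \ref{depth1coh}, which is routine.)

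For part (iii), fix $i=4$. As before, using Lemma \ref{largedep} and Corollary \ref{reduction} we may assume $\text{depth}_I(R)=\text{ht}(I)=i-1=3$, and $\text{Ass}\, H^4_I(R)$ is finite by LC-finiteness. From the table the relevant case is $j=2$, giving $a=\text{depth}_I(S)=3-2=1$; we apply Lemma \ref{prinq3} with $a=1$, $j=2$, $j+a+1=4$. The decomposition hypothesis now requires $IS=yS\cap I_0$ up to radicals with $\text{depth}_{I_0}(S)>1$, i.e. $\text{ht}(I_0)\ge 2$ (here we use that $S$ is normal, so height $\ge 2$ forces depth $\ge 2$, exactly as in Lemma \ref{facdecomp}). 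This is precisely the output of Corollary \ref{localprinc}: when $\text{depth}_{IS}(S)=1$, on a finite open cover of $\text{Spec}(S)$ we get $\sqrt{IS_{f_m}}=\sqrt{y_m S_{f_m}}\cap I_0$ with $\text{depth}_{I_0}(S)\ge 2$, which is the hypothesis needed (after replacing $S$ by $S_{f_m}$ and $R$ by the corresponding localization — note regularity, LC-finiteness, Cohen--Macaulayness, normality, and local almost factoriality all localize). If instead $\text{depth}_{IS}(S)\ne 1$, then either it is $0$ (handled on the domain-factor level as in part (ii), or it forces $IS$ inside a minimal prime) or it is $\ge 2$, in which case $H^3_I(S)\simeq H^3_{I_0}(S)$ trivially with $y=1$ and Lemma \ref{prinq3} still applies; and once more we must check $H^3_I(S)$ and $H^4_I(J)$ are unaffected by passing to the cover and that finiteness of associated primes can be detected chart-by-chart.

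The main obstacle I anticipate is bookkeeping rather than a single hard idea: one must carefully confirm that all the reductions — the depth normalization via Corollary \ref{reduction}, the splitting into a product of domains, and the passage to a finite open cover on which $IS$ acquires the decomposition $yS\cap I_0$ — are simultaneously compatible, that each reduction preserves the hypotheses on $R$ and $S$ (LC-finite, regular, normal, locally almost factorial), and that finiteness of the relevant $\text{Ass}$ sets is inherited both ways through these operations. The genuinely delicate point is matching up $a=i-1-j$ with the index $j+a+1=i$ in Lemma \ref{prinq3} and checking that the decomposition hypothesis of that lemma is met exactly in the two cases $a=0$ and $a=1$ that the table isolates, using disjoint components for $a=0$ and normal plus locally almost factorial (via Corollary \ref{localprinc}) for $a=1$.
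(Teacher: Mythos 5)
Your proposal is correct and follows essentially the same route as the paper: normalize depth via Corollary \ref{reduction} and Lemma \ref{largedep} so that $\text{depth}_I(R)=i-1$, produce the decomposition $\sqrt{IS}=\sqrt{yS}\cap I_0$ using the product-of-domains structure when $a=0$ and Corollary \ref{localprinc} on a finite cover when $a=1$, and conclude with Lemma \ref{prinq3}. The only differences are cosmetic (the paper phrases the $a=0$ decomposition via idempotents and spells out that the charts cover $V(J)\supseteq\text{Supp}\,H^\ell_I(-)$, which is the justification for chart-by-chart detection that you flag but do not carry out).
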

\begin{proof}
  For (i), it is well known that for any finitely generated $R$-module $M$, $\text{Ass}\, H^i_I(M)$ is finite whenever $i\leq 1$. The finiteness of $\text{Ass}\, H^2_I(J)$ is the subject of Theorem \ref{H2}.

  For (ii), we may use Corollary \ref{reduction} to replace $I$ with a possibly larger ideal
  in order to assume that $\text{depth}_I(R) \geq 2$. By Lemma \ref{largedep}, (ii) is immediate if $\text{depth}_I(R)> 2$, so assume $\text{depth}_I(R)= 2$. Since $J\subseteq I$ and $\text{depth}_J(R)\geq 2$, it follows
  that $\text{depth}_J(R)=2$ and $\text{depth}_I(S)=\text{ht}(IS)=0$. Let $e_1,\cdots,e_t\in S$ be a complete
  set of orthogonal idempotents. The minimal primes of $S$ are
  $\sqrt{(1-e_1)S},\cdots,\sqrt{(1-e_t)S}$, and thus, every pure height $0$ ideal of $S$
  has arithmetic rank at most $1$. Up to radicals, we can therefore write $IS$ as $yS\cap I_0$
  where $\text{ht}(I_0)=\text{depth}_{I_0}(S)>0$. Since $\text{depth}_J(R)\geq 2-0$, the claim follows from Lemma \ref{prinq3} in the case where $a=0$.

  For (iii), again using Corollary \ref{reduction} and Lemma \ref{largedep}, we may assume
  that $\text{depth}_I(R)=\text{depth}_I(S)+\text{depth}_J(R)=3$. Since $\text{depth}_J(R)\geq 2$, this means $\text{depth}_I(S)\leq 1$. If $\text{depth}_J(R)=3$, giving $\text{depth}_I(S)=0$, then we may argue as in the proof of (ii) (note that $S$ is a product of domains). If $\text{depth}_J(R)=2$, giving $\text{depth}_I(S)=1$, then by Corollary \ref{localprinc} there is a finite cover of $\text{Spec}(S)$ by charts
  $\text{Spec}(S_{f_1}),\cdots,\text{Spec}(S_{f_t})$ such that for each $i$, we can write (up to radicals) $IS_{f_i} = y_iS_{i}\cap I_{0,i}$ with $\text{depth}_{I_{0,i}}(S_{f_i})>1$. Replace $f_1,\cdots,f_t$ with lifts
  from $R/J$ to $R$ in order to assume $f_1,\cdots,f_t\in R$.
  Lemma \ref{prinq3} in the case $a=1$ shows that for each $i$,
  $\text{Ass}\, H^4_I(J)_{f_i}$ is finite if and only if $\text{Ass}\, H^{3}_I(S)_{f_i}$ is finite.
  The charts $\text{Spec}(R_{f_1}),\cdots,\text{Spec}(R_{f_t})$ do not necessarily cover $\text{Spec}(R)$, but they do cover the subset $V(J)$. Since $I\supseteq J$, $\text{Supp}\, H^\ell_I(-)\subseteq V(I)\subseteq V(J)$ for all $\ell$,
  so showing that $\text{Ass}\, H^4_I(J)$ is finite is equivalent to showing that $\text{Ass}\, H^4_I(J)_{f_i}$ is finite for each $i$. The result we proved on each chart therefore implies
  $\text{Ass}\, H^4_I(J)$ is finite if and only if $\text{Ass}\, H^3_I(S)$ is finite.
\end{proof}

Under the hypotheses of (iii), we can give the following partial answer to Question 3 for local rings of sufficiently small dimension.
\begin{corol}
Let $(R,\mathfrak{m},K)$ be an LC-finite regular local ring of dimension at most $7$, let $J$ be an ideal generated by a regular sequence of length at least $2$ such that $S=R/J$ is normal and almost factorial. Let $I$ be any ideal of $R$ containing $J$. Then for all $i\geq 1$, $\text{Ass}\, H^i_I(J)$ is finite if and only if $\text{Ass}\, H^{i-1}_I(S)$ is finite.
\end{corol}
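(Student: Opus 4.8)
The plan is to split the claim by the value of $i$: small $i$ will reduce to the cases of Theorem \ref{smalli}, and large $i$ will be handled by a dimension count that forces \emph{both} sides to be finite. Two structural facts about $S=R/J$ will be used throughout, and I would establish them first. Since $R$ is regular (hence Cohen--Macaulay) and $J$ is generated by a regular sequence of length $j\geq 2$, the ring $S$ is Cohen--Macaulay local with $\dim S=\dim R-j\leq 7-2=5$. Moreover $S$ is local, so it has no nontrivial idempotents; being normal, it is therefore a normal domain, and since the localization map on divisor class groups $\text{Cl}(S)\to\text{Cl}(S_P)$ is surjective for every $P\in\text{Spec}(S)$, the hypothesis that $\text{Cl}(S)$ is torsion passes to every $S_P$. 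Thus $S$ is normal and \emph{locally} almost factorial, and its spectrum is irreducible.

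For $i\leq 2$, Theorem \ref{smalli}(i) already gives that $\text{Ass}\, H^i_I(J)$ and $\text{Ass}\, H^{i-1}_I(S)$ are both finite, so the asserted equivalence is vacuously true. For $i=3$: since $S$ is a domain, the irreducible components of $\text{Spec}(S)$ are trivially disjoint, so Theorem \ref{smalli}(ii) is precisely the statement in this degree. For $i=4$: $S$ is normal and locally almost factorial, so Theorem \ref{smalli}(iii) applies verbatim. No dimension hypothesis is needed in these degrees.

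It remains to treat $i\geq 5$, where I would show that both $\text{Ass}\, H^{i-1}_I(S)$ and $\text{Ass}\, H^i_I(J)$ are finite. For the first: $H^{i-1}_I(S)=H^{i-1}_{IS}(S)$, and since $\dim S\leq 5$ we have $i-1\geq 4\geq\dim S-1$, so Corollary \ref{heightcontrol} applied over the (semi)local ring $S$ shows $\text{Supp}\, H^{i-1}_I(S)$, and hence $\text{Ass}\, H^{i-1}_I(S)$, is finite. For the second, apply $\Gamma_I(-)$ to $0\to J\to R\to S\to 0$ to obtain an exact sequence $H^{i-1}_I(S)\xrightarrow{\alpha} H^i_I(J)\xrightarrow{\beta} H^i_I(R)$. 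Set $C=\text{Im}(\alpha)=\text{Ker}(\beta)$: as a quotient of $H^{i-1}_I(S)$ it has $\text{Supp}\, C\subseteq\text{Supp}\, H^{i-1}_I(S)$ finite, so $\text{Ass}\, C$ is finite, while $H^i_I(J)/C\simeq\text{Im}(\beta)$ embeds in $H^i_I(R)$, whence $\text{Ass}(H^i_I(J)/C)\subseteq\text{Ass}\, H^i_I(R)$ is finite because $R$ is LC-finite. Therefore $\text{Ass}\, H^i_I(J)\subseteq\text{Ass}\, C\cup\text{Ass}(H^i_I(J)/C)$ is finite, and the equivalence for $i\geq 5$ is again vacuous.

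The only place the bound $\dim R\leq 7$ enters is the range $i\geq 5$: it yields $\dim S\leq 5$, which is exactly what puts $H^{i-1}_I(S)$ into the degrees ($\geq\dim S-1$) where Corollary \ref{heightcontrol} gives finite support, after which the long exact sequence transfers finiteness to $H^i_I(J)$ using only LC-finiteness of $R$ (one cannot simply invoke Corollary \ref{heightcontrol} for $H^i_I(J)$ over $R$ in the boundary degree $i=5$, since $\dim R-1$ can be $6$ --- this is the one step that needs the extra care of the exact sequence). Consequently the substantive content of the corollary is concentrated in the degrees $i=3$ and $i=4$, where the dimension count does not reach and Theorem \ref{smalli}(ii)--(iii) is genuinely required; I expect the main point demanding attention to be nothing deeper than the first-paragraph verification that those two theorems' hypotheses ($S$ a domain; $S$ normal and locally almost factorial) are met.
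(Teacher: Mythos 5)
Your proposal is correct and follows essentially the same route as the paper: Theorem \ref{smalli} handles $i\leq 4$ (after noting that a normal local ring is a domain and that the almost factorial property localizes), and for $i\geq 5$ the bound $\dim S\leq 5$ feeds Corollary \ref{heightcontrol} to make $\operatorname{Supp} H^{i-1}_I(S)$ finite, after which the long exact sequence and LC-finiteness of $R$ transfer finiteness to $H^i_I(J)$. Your explicit verification of the hypotheses of parts (ii) and (iii) of Theorem \ref{smalli} is a detail the paper leaves implicit, but the argument is the same.
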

\begin{proof}
  The case $i\leq 4$ is the subject of Theorem \ref{smalli}. We must have $\dim(S)\leq 5$ since $\text{depth}_J(R)\geq 2$, so by Corollary \ref{heightcontrol}, $\text{Supp}\, H^{i-1}_I(S)$ (and hence $\text{Ass}\, H^{i-1}_I(S)$) is a finite set if $i-1\geq 4$. Likewise, for any homomorphic image $H^{i-1}_I(S)\twoheadrightarrow N$, the set $\text{Supp}\,(N)$ is finite. There is an exact sequence $0\to N \to H^i_I(J)\to M\to 0$
  where $N$ is a homomorphic image of $H^{i-1}_I(S)$ and $M$ is a submodule of $H^i_I(R)$. If $i\geq 5$, both $\text{Ass}\,(N)$ (a subset of $\text{Supp}\,(N)$) and $\text{Ass}\,(M)$ (a subset of $\text{Ass}\, H^i_I(R)$) are finite, so $\text{Ass}\, H^i_I(J)$ is finite as well.
\end{proof}


\section{Regular parameter ideals in characteristic $p>0$}
In this section, we will show that if $R$ is a regular ring of prime characteristic $p>0$ and $J$ is an ideal such that $R/J$ is regular, then for any ideal $I\subseteq R$ and any $i\geq 0$, the set $\text{Ass}\, H^i_I(J)$ is finite. This result is essentially a corollary of a stronger result. Specifically, we will show that if $R\to S$ is a homomorphism between two regular rings of prime characteristic $p>0$, and $I\subseteq R$ is an ideal, then for any $i\geq 0$, the natural map
$$
S\otimes_R H^i_I(R)\to H^i_I(S)
$$
is a morphism of $F$-finite $F_S$-modules in the sense of Lyubeznik \cite{lyufmod}. The proof of this statement relies on understanding a certain family of natural transformations that compare the cohomology of a complex before and after applying some base change functor.

\subsection{Natural transformations associated with cohomology and base change}
Let $\text{Mod}_{R}$ denote the category of modules over a ring $R$, and let $\text{Kom}_{R}$ denote the category
of cohomologically indexed complexes of $R$-modules. Let $H^i:\text{Kom}_{R}\to\text{Mod}_{R}$ denote the functor
that takes a complex $C^\bullet$ to its $i$th cohomology module $H^i(C^\bullet)$.

\begin{defn}
  If $f:R\to S$ is a ring homomorphism and $C^\bullet$ is an $R$-complex, then the natural ($R$-linear) map $C^\bullet \to S\otimes_R C^\bullet$ induces $H^i(C^\bullet)\to H^i(S\otimes_R C^\bullet)$, which factors uniquely through the natural map $H^i(C^\bullet)\to S\otimes_R H^i(C^\bullet)$ to an $S$-linear map $S\otimes_R H^i(C^\bullet)\to H^i(S\otimes_R C^\bullet)$. Call this map $h^i_{f}(C^\bullet)$, and let
  $h^i_{f}$ denote the corresponding natural transformation
  $$h^i_{f}(-):S\otimes_R H^i(-)\longrightarrow H^i(S\otimes_R -)$$
  of functors $\text{Kom}_{R}\to\text{Mod}_{S}$.
\end{defn}
If the homomorphism $f:R\to S$ is understood from context, we will write $h^i_{S/R}(-)$ instead of $h^i_{f}(-)$. The latter, more precise notation is reserved for ambiguous cases, such as when $R=S$ has prime characteristic $p$ and $f$ is the Frobenius homomorphism.

Note that $S$ is flat over $R$ if and only if $h^i_{S/R}(-)$ is an
  isomorphism of functors. Our goal in this section is to briefly review some straightforward but crucially important compatibility properties of these natural transformations.

\begin{prop}\label{comp} (Compatibility with composition)
  Let $R\to S\to T$ be ring homomorphisms. The following diagram of functors $\text{Kom}_{R}\to\text{Mod}_{T}$
  commutes
  \begin{center}
    \begin{tikzcd}[column sep = small]
      T\otimes_R H^i(-)\ar[rrrrrdd,"h^i_{T/R}(-)"'] \ar[r,no head, shift right = 0.1em]\ar[r,no head,shift left=0.1em]& T\otimes_S (S\otimes_R H^i(-))\ar[rrrr,"\text{id}_T\otimes_S h^i_{S/R}(-)"]
      & & & & T\otimes_S H^i(S\otimes_R -)\ar[d,"h^i_{T/S}(S\otimes_R -)"]\\
     & & & & & H^i(T\otimes_S(S\otimes_R -))\ar[d,no head, shift right = 0.1em]\ar[d,no head,shift left=0.1em]\\
     & & & & & H^i(T\otimes_R -)
    \end{tikzcd}
  \end{center}
  where the equalities shown above come from identifying the functor $T\otimes_S(S\otimes_R-)$
  with $T\otimes_R -$.
\end{prop}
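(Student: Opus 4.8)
The plan is to unwind both paths of the diagram on a fixed complex $C^\bullet$ and check they agree on each element, using the universal property (factorization through $H^i(-)$) that defines the maps $h^i_{-/-}$. First I would set up notation: write $\alpha_{S/R}(C^\bullet): C^\bullet \to S\otimes_R C^\bullet$ for the canonical $R$-linear map of complexes, and similarly $\alpha_{T/S}, \alpha_{T/R}$; and write $\mu^i_{S/R}(C^\bullet): H^i(C^\bullet) \to S\otimes_R H^i(C^\bullet)$ for the canonical (base-change-on-cohomology) map. By definition, $h^i_{S/R}(C^\bullet)$ is the unique $S$-linear map with $h^i_{S/R}(C^\bullet)\circ \mu^i_{S/R}(C^\bullet) = H^i(\alpha_{S/R}(C^\bullet))$, and likewise for $T/S$ and $T/R$. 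The key observation is that $\alpha_{T/R}(C^\bullet)$ factors as the composite $C^\bullet \xrightarrow{\alpha_{S/R}(C^\bullet)} S\otimes_R C^\bullet \xrightarrow{\alpha_{T/S}(S\otimes_R C^\bullet)} T\otimes_S(S\otimes_R C^\bullet) = T\otimes_R C^\bullet$, simply because both are the canonical map sending $c\mapsto 1\otimes c$. Applying $H^i$ and using functoriality gives $H^i(\alpha_{T/R}) = H^i(\alpha_{T/S}(S\otimes_R -))\circ H^i(\alpha_{S/R})$.

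Next I would chase the diagram. Both composites in the statement are $T$-linear maps out of $T\otimes_R H^i(C^\bullet)$, so by adjunction (extension of scalars) it suffices to check they agree after precomposition with the canonical map $H^i(C^\bullet)\to T\otimes_R H^i(C^\bullet)$, i.e. on elements of the form $1\otimes \xi$ with $\xi\in H^i(C^\bullet)$. The right-then-down path sends $1\otimes \xi$ to $h^i_{T/R}(C^\bullet)(1\otimes\xi) = H^i(\alpha_{T/R}(C^\bullet))(\xi)$ by the defining property of $h^i_{T/R}$. For the top-then-right-then-down path: the top identification sends $1\otimes\xi$ to $1\otimes(1\otimes\xi)$; applying $\mathrm{id}_T\otimes_S h^i_{S/R}(C^\bullet)$, and using that $h^i_{S/R}(C^\bullet)(1\otimes\xi) = H^i(\alpha_{S/R}(C^\bullet))(\xi)$, yields $1\otimes H^i(\alpha_{S/R}(C^\bullet))(\xi) \in T\otimes_S H^i(S\otimes_R C^\bullet)$; applying $h^i_{T/S}(S\otimes_R C^\bullet)$ to this element of the form $1\otimes(\text{class})$ gives $H^i(\alpha_{T/S}(S\otimes_R C^\bullet))\big(H^i(\alpha_{S/R}(C^\bullet))(\xi)\big)$; and the final identification is the identity on the underlying module $H^i(T\otimes_R C^\bullet)$. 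By the factorization of $H^i(\alpha_{T/R})$ noted above, this equals $H^i(\alpha_{T/R}(C^\bullet))(\xi)$, matching the other path.

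Finally I would note that every map appearing is natural in $C^\bullet$ — each is built from the canonical maps $\alpha$ and $\mu$, which are natural transformations — so the pointwise equality just verified is an equality of natural transformations $\mathrm{Kom}_R \to \mathrm{Mod}_T$, which is exactly the asserted commutativity.

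The only genuinely delicate point is the bookkeeping around the canonical identification $T\otimes_S(S\otimes_R -) \cong T\otimes_R -$: one must confirm that under this identification the map $\alpha_{T/S}(S\otimes_R C^\bullet)$ followed by the identification is literally $\alpha_{T/R}(C^\bullet)$, and that $h^i_{T/S}$ evaluated on elements $1\otimes(\text{class})$ behaves as claimed — both are immediate from tracing $c\mapsto 1\otimes c \mapsto 1\otimes 1\otimes c$, but they are the load-bearing compatibilities. Everything else is a formal diagram chase via universal properties, so I do not expect any substantive obstacle.
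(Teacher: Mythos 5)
Your proof is correct and follows essentially the same route as the paper's: both hinge on the factorization of the canonical map $C^\bullet \to T\otimes_R C^\bullet$ through $S\otimes_R C^\bullet$, followed by applying $H^i(-)$ and checking the two $T$-linear maps agree on the generators $1\otimes\xi$. Your write-up is simply a more explicit, element-level version of the paper's (sketched) diagram-chase.
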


\begin{prop}\label{simflat} (Simultaneous flat base change)
  Suppose there is a commutative square of ring homomorphisms
  \begin{center}
    \begin{tikzcd}
    R\ar[d]\ar[r] & S\ar[d]\\
    R^\prime\ar[r] & S^\prime
    \end{tikzcd}
  \end{center}
  such that $R^\prime$ is flat over $R$, and $S^\prime$ is flat over $S$. Then there is a commutative
  square of functors $\text{Kom}_{R}\to \text{Mod}_{S^\prime}$
  \begin{center}
    \begin{tikzcd}[column sep = huge]
      S^\prime\otimes_S (S\otimes_R H^i(-))\ar[d,"\text{\rotatebox{90}{$\sim$}}"'] \ar[r,"\text{id}_{S^\prime}\otimes_S h^i_{S/R}(-)"]
      & S^\prime\otimes_S H^i(S\otimes_R-)\ar[d,"\text{\rotatebox{90}{$\sim$}}"]\\
      
      S^\prime \otimes_{R^\prime} H^i(R^\prime\otimes_R -) \ar[r,"h^i_{S^\prime/R^\prime}(R^\prime\otimes_R -)"']
      & H^i(S^\prime\otimes_{R^\prime} (R^\prime\otimes_R-))
    \end{tikzcd}
  \end{center}
  where the vertical arrows are natural isomorphisms.
\end{prop}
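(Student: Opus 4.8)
The plan is to reduce the claimed commutativity of the square in Proposition \ref{simflat} to the commutativity of a diagram of \emph{complexes} (not cohomology modules), where the relevant identifications are transparent, and then apply the cohomology functor $H^i(-)$. The key observation is that both composites in the square are built from two kinds of arrows: the ``comparison'' natural transformations $h^i_{(-)/(-)}$, which come from factoring $H^i(C^\bullet)\to H^i(T\otimes C^\bullet)$ through the natural map $H^i(C^\bullet)\to T\otimes H^i(C^\bullet)$; and flat base change isomorphisms, which exist precisely because the cohomology of a complex commutes with tensoring by a flat module. So the strategy is: first fix an $R$-complex $C^\bullet$ and unwind what each of the four maps in the square does to it, then verify commutativity at the level of $C^\bullet$, and finally note that every construction involved is natural in $C^\bullet$.

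First I would set up the identifications explicitly. Using the associativity isomorphism $S'\otimes_S(S\otimes_R M)\simeq S'\otimes_{R'}(R'\otimes_R M)$ (both being $S'\otimes_R M$), the left vertical arrow is just this canonical isomorphism applied with $M=H^i(C^\bullet)$; the right vertical arrow is the flat base change isomorphism $S'\otimes_S H^i(S\otimes_R C^\bullet)\xrightarrow{\sim} H^i(S'\otimes_S(S\otimes_R C^\bullet))$ — valid since $S'$ is $S$-flat — composed with the identification of $S'\otimes_S(S\otimes_R C^\bullet)$ with $S'\otimes_{R'}(R'\otimes_R C^\bullet)$. Then I would chase a section: starting from an element of $S'\otimes_S(S\otimes_R H^i(C^\bullet))$, both ways around the square land in $H^i(S'\otimes_{R'}(R'\otimes_R C^\bullet))$, and I would check that they agree by tracing through the definition of $h^i$ as the unique $S$-linear (resp.\ $S'$-linear) factorization. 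The point is that the ``factor through $T\otimes_R H^i(-)$'' characterization of each $h^i$ means the two composites are both $S'$-linear maps $S'\otimes_R H^i(C^\bullet)\to H^i(S'\otimes_R C^\bullet)$ that restrict to the \emph{same} map on the image of $H^i(C^\bullet)$ — namely the one induced by $C^\bullet\to S'\otimes_R C^\bullet$ — and hence coincide by uniqueness of the factorization. Concretely, I would prove a small lemma: if $g:M\to N$ is a map of $T$-modules obtained by factoring an additive map $M_0\to N$ (for $M_0$ an $R$-module mapping $R$-linearly to $M$) through $M_0\to T\otimes_R M_0 = M$, then $g$ is determined by its restriction to the image of $M_0$; and both composites in our square arise this way from $C^\bullet\to S'\otimes_R C^\bullet$.

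For naturality of the square in the complex variable, I would observe that each of the four arrows is a component of a natural transformation already: $h^i_{S/R}(-)$ and $h^i_{S'/R'}(R'\otimes_R-)$ are natural by their very definition (Proposition \ref{comp}'s setup, or rather the definition preceding it), the base change isomorphisms are natural because $-\otimes_S(\text{flat})$ is exact and compatible with the differentials, and the associativity identifications are natural transformations of functors. A commutative square of natural transformations of functors, once checked objectwise, \emph{is} a commutative square of functors, so nothing more is needed.

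The main obstacle, such as it is, will be bookkeeping rather than mathematics: keeping straight the several canonical isomorphisms among iterated tensor products (associativity over the two base-change steps) and confirming that the flat base change isomorphism $S'\otimes_S H^i(D^\bullet)\simeq H^i(S'\otimes_S D^\bullet)$ is \emph{the same} identification whether one views $D^\bullet = S\otimes_R C^\bullet$ as an $S$-complex directly or as the restriction along $R'\to S'$ of $R'\otimes_R C^\bullet$. This is exactly the kind of coherence statement that is ``obvious'' but tedious; I expect the cleanest route is the uniqueness-of-factorization argument sketched above, which sidesteps most of the explicit diagram chasing, mirroring the (suppressed) proof of Proposition \ref{comp}. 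No step should require any input beyond flatness and the universal property defining $h^i$.
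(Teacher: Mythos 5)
Your argument is correct, but it is organized differently from the paper's. The paper's proof is a two-step diagram assembly: it applies Proposition \ref{comp} once to the composition $R\to S\to S^\prime$ and once to $R\to R^\prime\to S^\prime$, so that both composites in the square are exhibited as factorizations of the single diagonal transformation $h^i_{S^\prime/R}(-)$, and then invokes flatness only to conclude that $h^i_{R^\prime/R}(-)$ and $h^i_{S^\prime/S}(-)$ are isomorphisms (these, together with the associativity identifications, \emph{are} the vertical arrows of the statement). You instead bypass Proposition \ref{comp} and argue directly from the universal property of extension of scalars: both composites are $S^\prime$-linear maps out of $S^\prime\otimes_R H^i(C^\bullet)$ that restrict, on the image of $H^i(C^\bullet)$, to the map induced by $C^\bullet\to S^\prime\otimes_R C^\bullet$, hence coincide by the adjunction $\text{Hom}_{S^\prime}(S^\prime\otimes_R M_0,N)\cong\text{Hom}_R(M_0,N)$. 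This is exactly the mechanism hiding inside Proposition \ref{comp} (whose proof the paper suppresses), so your route is more self-contained but re-derives what the paper has already packaged; the paper's route is shorter on the page because the coherence bookkeeping you worry about in your last paragraph is absorbed into the two citations of Proposition \ref{comp}. One small inaccuracy to fix: the left vertical arrow of the square is \emph{not} merely the associativity isomorphism $S^\prime\otimes_S(S\otimes_R M)\cong S^\prime\otimes_{R^\prime}(R^\prime\otimes_R M)$ with $M=H^i(C^\bullet)$ --- its codomain is $S^\prime\otimes_{R^\prime}H^i(R^\prime\otimes_R -)$, so it must also include $\text{id}_{S^\prime}\otimes_{R^\prime}h^i_{R^\prime/R}(-)$, which is where the flatness of $R^\prime$ over $R$ is actually used. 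Since $h^i_{R^\prime/R}$ sends $1\otimes\xi$ to the image of $\xi$ in $H^i(R^\prime\otimes_R C^\bullet)$, this does not disturb your restriction argument, but it should be stated.
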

\begin{proof} Apply Proposition \ref{comp} to $R\to S\to S^\prime$ in order to see that the
  upper right corner of the below diagram commutes, and then to $R\to R^\prime \to S^\prime$
  to see that the lower left corner commutes as well:
  \begin{center}
    \begin{tikzcd}[column sep = huge, row sep = huge]
      S^\prime\otimes_S (S\otimes_R H^i(-))\ar[r,"\text{id}_{S^\prime}\otimes_S h^i_{S/R}(-)"]\ar[d,no head, shift right = 0.1em]\ar[d,no head,shift left=0.1em]
      & S^\prime\otimes_S H^i(S\otimes_R-)\ar[d,"h^i_{S^\prime/S}(S\otimes_R-)"] \\
      
      S^\prime \otimes_R H^i(-)\ar[d,no head, shift right = 0.1em]\ar[d,no head,shift left=0.1em]
      \ar[rd,"h^i_{S^\prime/R}(-)"]
       & H^i(S^\prime\otimes_S (S\otimes_R -))\ar[d,no head, shift right = 0.1em]\ar[d,no head,shift left=0.1em] \\
         
      S^\prime \otimes_{R^\prime}(R^\prime\otimes_R H^i(-))
      \ar[d,"\text{id}_{S^\prime}\otimes_{R^\prime} h^i_{R^\prime/R}(-)"']
       & H^i(S^\prime\otimes_R -)\ar[d,no head, shift right = 0.1em]\ar[d,no head,shift left=0.1em]\\
       
       S^\prime \otimes_{R^\prime} H^i(R^\prime\otimes_R -)\ar[r,"h^i_{S^\prime/R^\prime}(R^\prime\otimes_R -)"']
      & H^i(S^\prime\otimes_{R^\prime} (R^\prime\otimes_R-))
    \end{tikzcd}
  \end{center}
  We are finished upon observing that the flatness of $R^\prime$ over $R$ (resp. of $S^\prime$ over $S$) implies that $h^i_{R^\prime/R}(-)$ (resp. $h^i_{S^\prime/S}(-)$) is a natural isomorphism.
\end{proof}

Our main interest is in applying Proposition \ref{simflat} in the case where $R$ and $S$ are regular rings of primes characteristic $p>0$, and $R\to R^\prime$, $S\to S^\prime$ are the ($e$-fold iterates of the) Frobenius homomorphisms of $R$ and $S$. Some notation: if $A$ is a ring of prime characteristic $p>0$, let $F_A:A\to A$ denote the Frobenius homomorphism of $A$, and $\mathcal{F}_A(-)$ (either $\text{Mod}_A\to \text{Mod}_A$ or $\text{Kom}_A\to \text{Kom}_A$, depending on context) denote the base change functor associated with $F_A$. Let $\mathcal{F}^e_A(-)$ denote the $e$-fold iterate of $\mathcal{F}_A(-)$.
\begin{corol}\label{frob} 
  Let $R\to S$ be a homomorphism between two regular rings of prime characteristic $p>0$. For
  each $e\geq 1$, the following diagram commutes,
  \begin{center}
    \begin{tikzcd}[column sep = huge]
      \mathcal{F}^e_S(S\otimes_R H^i(-))\ar[d,"\text{\rotatebox{90}{$\sim$}}"'] \ar[r,"\mathcal{F}^e_S(h^i_{S/R}(-))"]
      & \mathcal{F}^e_S(H^i(S\otimes_R -))\ar[d,"\text{\rotatebox{90}{$\sim$}}"]\\
      
      S \otimes_{R} H^i(\mathcal{F}^e_R(-)) \ar[r,"h^i_{S/R}(\mathcal{F}^e_R(-))"']
      & H^i(S\otimes_{R} \mathcal{F}^e_R(-))
    \end{tikzcd}
  \end{center}
  where the vertical arrows are isomorphisms. 
\end{corol}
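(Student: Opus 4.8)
The plan is to deduce this diagram directly from Proposition \ref{simflat}, taking the two flat base changes there to be the $e$-fold Frobenius maps of $R$ and $S$.

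First I would record the relevant commutative square. Write $\phi\colon R\to S$ for the given homomorphism and fix $e\geq 1$. The square
\begin{center}
  \begin{tikzcd}
    R\ar[d,"F^e_R"']\ar[r,"\phi"] & S\ar[d,"F^e_S"]\\
    R\ar[r,"\phi"'] & S
  \end{tikzcd}
\end{center}
commutes, since for $r\in R$ one has $F^e_S(\phi(r))=\phi(r)^{p^e}=\phi(r^{p^e})=\phi(F^e_R(r))$. Because $R$ and $S$ are regular of prime characteristic $p>0$, Frobenius is flat on each (Kunz), hence so are the iterates $F^e_R$ and $F^e_S$; thus the square satisfies the hypotheses of Proposition \ref{simflat} with $R'=R$ and $S'=S$.

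Next I would translate the conclusion of Proposition \ref{simflat} into the stated notation. By definition, base change along $F^e_A$ is the functor $\mathcal{F}^e_A(-)$, while base change along $\phi$ (the top and bottom edges of the square) is $S\otimes_R-$. Under the canonical identifications $S\otimes_S S=S$ and $R\otimes_R R=R$, the four corners of the commutative square furnished by Proposition \ref{simflat} become precisely $\mathcal{F}^e_S(S\otimes_R H^i(-))$, $\mathcal{F}^e_S(H^i(S\otimes_R-))$, $S\otimes_R H^i(\mathcal{F}^e_R(-))$, and $H^i(S\otimes_R\mathcal{F}^e_R(-))$; the horizontal arrows become $\mathcal{F}^e_S(h^i_{S/R}(-))$ and $h^i_{S/R}(\mathcal{F}^e_R(-))$; and the vertical arrows are the natural isomorphisms provided by the proposition. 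This is exactly the asserted diagram.

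There is no real obstacle here beyond notational bookkeeping: the substantive inputs are only the (immediate) commutativity of the Frobenius square, Kunz's theorem that a regular ring of characteristic $p$ has flat Frobenius, and the observation that $\mathcal{F}^e$ is base change along the $e$-fold Frobenius, so that the functors $R'\otimes_R-$ and $S'\otimes_S-$ appearing in Proposition \ref{simflat} are literally $\mathcal{F}^e_R(-)$ and $\mathcal{F}^e_S(-)$. One could alternatively handle $e=1$ first and then paste $e$ copies of the resulting square vertically, using Proposition \ref{comp} to identify the composite horizontal maps; but the direct argument with $F^e$ is shorter.
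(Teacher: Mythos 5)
Your proof is correct and matches the paper's intent exactly: the corollary is obtained by applying Proposition \ref{simflat} to the commuting square whose vertical maps are the $e$-fold Frobenius endomorphisms of $R$ and $S$, with flatness supplied by Kunz's theorem and the identification of $\mathcal{F}^e_A(-)$ with base change along $F^e_A$. Nothing further is needed.
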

In other words, if we identify $\mathcal{F}^e_S(S\otimes_R H^i(-))$ with $S\otimes_R H^i(\mathcal{F}^e_R(-))$ and $\mathcal{F}^e_S(H^i(S\otimes_R -))$ with
$H^i(S\otimes \mathcal{F}^e_R(-))$ using the vertical isomorphisms, then for
all $e\geq 0$, $h^i_{S/R}(\mathcal{F}^e_R(-))=\mathcal{F}^e_S(h^i_{S/R}(-))$. Concretely, if $C^\bullet = K^\bullet(\mathbf{\underline{f}};R)$ is the Koszul complex on some sequence of elements
$\mathbf{\underline{f}} = f_1,\cdots,f_t\in R$, and if $\mathbf{\underline{f}}^{[p^e]}:= f_1^{p^e},\cdots,f_t^{p^e}$, then a consequence of Corollary \ref{frob} is that the family of maps $\{h^i_e\}_{e=0}^\infty$ given by
$$
h^i_e:S\otimes_R H^i(\mathbf{\underline{f}}^{[p^e]};R)\to H^i(\mathbf{\underline{f}}^{[p^e]};S).
$$
  is determined by the data of just the $0$th map $h^i_0$ by taking Frobenius powers. The statement is more abstruse when applied directly to the \v{C}ech complex, since $\check{C}(\mathbf{\underline{f}};R)$ is canonically identified with $\mathcal{F}_R(\check{C}(\mathbf{\underline{f}};R))$. However, when we replace $R$ with a general $F$-finite $F_R$-module $\mathcal{M}$ in the sequel, it will simplify our main proofs if we work at the level of \v{C}ech cohomology.

\subsection{The natural map on local cohomology}
Throughout this section, $R$ and $S$ are regular rings of prime characteristic $p>0$. Recall that an $F_R$-module is a pair $(\mathcal{M},\theta)$ consisting of an $R$-module $\mathcal{M}$ and an isomorphism $\theta:\mathcal{M}\to\mathcal{F}_R(\mathcal{M})$. A morphism
of $F_R$-modules $(\mathcal{M},\theta)\to (\mathcal{N},\phi)$ is an $R$-linear map $h:\mathcal{M}\to\mathcal{N}$ such
that $\phi\circ h = \mathcal{F}_R(h)\circ \theta$. If $R\to S$ is a ring homomorphism and $(\mathcal{M},\theta)$
is an $F_R$-module, then $S\otimes_R \mathcal{M}$ is an $F_S$-module via the structure isomorphism
$\text{id}_S\otimes \theta:S\otimes_R \mathcal{M}\to S\otimes_R \mathcal{F}_R(\mathcal{M})$, where we identify the $S\otimes_R \mathcal{F}_R(\mathcal{M})$ with $\mathcal{F}_S(S\otimes_R \mathcal{M})$ in the canonical way.

Let $\mathbf{\underline{f}}=f_1,\cdots,f_t$ be a sequence of elements of $R$, let $C^\bullet:=\check{C}^\bullet(\mathbf{\underline{f}};R)$ be the \v{C}ech complex on $R$ associated with $\mathbf{\underline{f}}$. For shorthand, if $M$ is an $R$-module, denote by $C^\bullet_M:= C^\bullet\otimes_R M =\check{C}^\bullet(\mathbf{\underline{f}};M)$ the \v{C}ech complex on $M$ associated with $\mathbf{\underline{f}}$. The complex $\mathcal{F}_R(C^\bullet)$ is canonically identified with $C^\bullet$ itself, and likewise,
for any $R$-module $M$, $\mathcal{F}_R(C^\bullet_M)$ is canonically identified with $C^\bullet_{\mathcal{F}_R(M)}$. If $(\mathcal{M},\theta)$ is an $F_R$-module, then applying
$C^\bullet\otimes_R -$ to $\theta:\mathcal{M}\to \mathcal{F}_R(\mathcal{M})$ gives an isomorphism of complexes
$\Theta:C^\bullet_{\mathcal{M}}\to \mathcal{F}_R(C^\bullet_{\mathcal{M}})$. Using this isomorphism, $H^i_I(\mathcal{M})$ can naturally be made into an $F_R$-module for any $i$: its structure isomorphism $\Psi: H^i_I(\mathcal{M})\to \mathcal{F}_R(H^i_I(\mathcal{M}))$ is given by the composition
\vspace{-1.2em}
\begin{center}
  \begin{equation}\label{decomp}
  \begin{tikzcd}
    H^i(C^\bullet_{\mathcal{M}})\ar[r,"H^i(\Theta)"] &
    H^i(\mathcal{F}_R(C^\bullet_{\mathcal{M}}))\ar[rr,"h^i_{F_R}(C^\bullet_{\mathcal{M}})^{-1}"] &&
    \mathcal{F}_R(H^i(C^\bullet_{\mathcal{M}}))
  \end{tikzcd}\tag{$\star\star$}
  \end{equation}
\end{center}

We now prove our main compatibility result.
\begin{theor}\label{morph}
Let $R\to S$ be a homomorphism between two regular rings of prime characteristic $p>0$, fix an ideal $I\subseteq R$ and an index $i\geq 0$, and let $(\mathcal{M},\theta)$ be an
$F_R$-module. The natural map
$$
S\otimes_R H^i_I(\mathcal{M})\to H^i_I(S\otimes_R \mathcal{M})
$$
is a morphism of $F_S$-modules. 
\end{theor}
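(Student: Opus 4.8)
The plan is to work with an explicit \v{C}ech model and to reduce the statement to the naturality of $h^i_{S/R}(-)$ combined with Corollary~\ref{frob}. Fix generators $\mathbf{\underline{f}}=f_1,\dots,f_t$ of $I$ up to radical, put $C^\bullet=\check{C}^\bullet(\mathbf{\underline{f}};R)$, and for an $R$-module $M$ write $C^\bullet_M=C^\bullet\otimes_R M$, so that $H^i_I(M)=H^i(C^\bullet_M)$. Under the canonical identification $S\otimes_R C^\bullet\cong\check{C}^\bullet(\mathbf{\underline{f}};S)$ one has $S\otimes_R C^\bullet_{\mathcal{M}}\cong C^\bullet_{S\otimes_R\mathcal{M}}$, and with respect to this identification the natural map in the statement is exactly $h^i_{S/R}(C^\bullet_{\mathcal{M}})$.

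First I would unwind the two $F_S$-module structures in play. Write $\Theta=C^\bullet\otimes_R\theta\colon C^\bullet_{\mathcal{M}}\to\mathcal{F}_R(C^\bullet_{\mathcal{M}})$, using $\mathcal{F}_R(C^\bullet_{\mathcal{M}})=C^\bullet_{\mathcal{F}_R(\mathcal{M})}$. By \eqref{decomp} the structure isomorphism on $H^i_I(\mathcal{M})$ is $\Psi_R=h^i_{F_R}(C^\bullet_{\mathcal{M}})^{-1}\circ H^i(\Theta)$, so, modulo the canonical identification $S\otimes_R\mathcal{F}_R(-)\cong\mathcal{F}_S(S\otimes_R -)$, the structure isomorphism on $S\otimes_R H^i_I(\mathcal{M})$ is $\text{id}_S\otimes_R\Psi_R$. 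Running \eqref{decomp} over $S$ on the $F_S$-module $(S\otimes_R\mathcal{M},\text{id}_S\otimes\theta)$ gives the structure isomorphism $\Psi'=h^i_{F_S}(C^\bullet_{S\otimes_R\mathcal{M}})^{-1}\circ H^i(\Theta')$ on $H^i_I(S\otimes_R\mathcal{M})$, where $\Theta'$ is $(S\otimes_R C^\bullet)\otimes_S(\text{id}_S\otimes\theta)$. A short bookkeeping check identifies $S\otimes_R\Theta$ with $\Theta'$: both are $\check{C}^\bullet(\mathbf{\underline{f}};-)$ evaluated on the $S$-linear map $\text{id}_S\otimes\theta\colon S\otimes_R\mathcal{M}\to S\otimes_R\mathcal{F}_R(\mathcal{M})=\mathcal{F}_S(S\otimes_R\mathcal{M})$.

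With this dictionary in place, the assertion becomes the commutativity of the outer rectangle obtained by stacking two squares along the common edge $h^i_{S/R}(\mathcal{F}_R(C^\bullet_{\mathcal{M}}))$. The top square is the naturality square of $h^i_{S/R}(-)\colon S\otimes_R H^i(-)\to H^i(S\otimes_R -)$ evaluated at the chain map $\Theta$, namely $h^i_{S/R}(\mathcal{F}_R(C^\bullet_{\mathcal{M}}))\circ(\text{id}_S\otimes H^i(\Theta))=H^i(S\otimes_R\Theta)\circ h^i_{S/R}(C^\bullet_{\mathcal{M}})$, with $H^i(S\otimes_R\Theta)$ identified with $H^i(\Theta')$ as above. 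The bottom square is Corollary~\ref{frob} with $e=1$ applied to $C^\bullet_{\mathcal{M}}$: it identifies $h^i_{S/R}(\mathcal{F}_R(C^\bullet_{\mathcal{M}}))$ with $\mathcal{F}_S(h^i_{S/R}(C^\bullet_{\mathcal{M}}))$ after composing with the isomorphisms $h^i_{F_R}$ and $h^i_{F_S}$; that is, it expresses that base change along $R\to S$ commutes with Frobenius. Decomposing the vertical edges of the rectangle as $\text{id}_S\otimes_R\Psi_R=(\text{id}_S\otimes h^i_{F_R}(C^\bullet_{\mathcal{M}})^{-1})\circ(\text{id}_S\otimes H^i(\Theta))$ and $\Psi'=h^i_{F_S}(C^\bullet_{S\otimes_R\mathcal{M}})^{-1}\circ H^i(\Theta')$, pasting the two squares yields $\Psi'\circ h^i_{S/R}(C^\bullet_{\mathcal{M}})=\mathcal{F}_S(h^i_{S/R}(C^\bullet_{\mathcal{M}}))\circ(\text{id}_S\otimes_R\Psi_R)$, which is precisely the statement that $h^i_{S/R}(C^\bullet_{\mathcal{M}})$ is a morphism of $F_S$-modules.

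The diagram chase itself is routine; the one step that demands care — and the main obstacle — is matching up all the canonical identifications ($\mathcal{F}_R(C^\bullet_M)=C^\bullet_{\mathcal{F}_R(M)}$, $S\otimes_R C^\bullet=\check{C}^\bullet(\mathbf{\underline{f}};S)$, $S\otimes_R\mathcal{F}_R(M)=\mathcal{F}_S(S\otimes_R M)$, and the identifications underlying the vertical isomorphisms of Corollary~\ref{frob}), so that the two squares genuinely share the edge $h^i_{S/R}(\mathcal{F}_R(C^\bullet_{\mathcal{M}}))$ and the vertical edges decompose as claimed. Since each of these identifications is functorial and was already used to construct \eqref{decomp} and Corollary~\ref{frob}, this is careful bookkeeping rather than a substantive difficulty. (Note that $\mathcal{M}$ need not be $F$-finite for this argument; $F$-finiteness enters only later, for the finiteness of associated primes.)
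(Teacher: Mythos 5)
Your proof is correct and takes essentially the same route as the paper's: both identify the map with $h^i_{S/R}(C^\bullet_{\mathcal{M}})$ on the \v{C}ech complex, decompose the structure isomorphisms via \eqref{decomp}, and paste the naturality square of $h^i_{S/R}(-)$ at the chain map $\Theta$ with the Frobenius-compatibility square of Corollary \ref{frob} along the shared edge $h^i_{S/R}(\mathcal{F}_R(C^\bullet_{\mathcal{M}}))$. The paper merely stacks the same two squares in the opposite order (writing the naturality square in terms of $\Theta^{-1}$), so the difference is purely presentational.
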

\begin{proof}
  Let $C^\bullet_{\mathcal{M}} = \check{C}^\bullet(\mathbf{\underline{f}};\mathcal{M})$
  be the \v{C}ech complex on $\mathcal{M}$ associated with a sequence of elements $\mathbf{\underline{f}}=f_1,\cdots,f_t$ generating $I$. It is enough to show that the diagram
\begin{center}
\begin{tikzcd}[column sep = huge]
  \mathcal{F}_S(S\otimes_R H^i(C^\bullet_{\mathcal{M}}))\ar[r,"\mathcal{F}_S(h^i_{S/R}(C^\bullet_{\mathcal{M}}))"]\ar[d,"\text{\rotatebox{90}{$\sim$}}"'] & \mathcal{F}_S(H^i(S\otimes_R C^\bullet_{\mathcal{M}}))\ar[d,"\text{\rotatebox{90}{$\sim$}}"']\\
  S\otimes_R H^i(C^\bullet_{\mathcal{M}})\ar[r,"h^i_{S/R}(C^\bullet_{\mathcal{M}})"] & H^i(S\otimes_RC^\bullet_{\mathcal{M}})
\end{tikzcd}
\end{center}
commutes, where the vertical arrows are the inverses of the structure morphisms of
$S\otimes_R H^i_I(\mathcal{M})$ and $H^i_I(S\otimes_R \mathcal{M})$ as $F_S$-modules, respectively. Let $\Theta:C^\bullet_{\mathcal{M}}\to\mathcal{F}(C^\bullet_{\mathcal{M}})$ denote the isomorphism of complexes induced by $\theta$. Using
the decomposition \eqref{decomp} of the structure isomorphism of $H^i_I(\mathcal{M})$, the stated result is equivalent to showing that the following diagram commutes.
\begin{center}
\begin{tikzcd}
  \mathcal{F}_S(S\otimes_R H^i(C^\bullet_{\mathcal{M}}))
  \ar[d,no head, shift right = 0.1em]\ar[d,no head,shift left=0.1em]
  \ar[rr,"\mathcal{F}_S(h^i_{S/R}(C^\bullet_{\mathcal{M}}))"]
  && \mathcal{F}_S(H^i(S\otimes_R C^\bullet_{\mathcal{M}}))
  \ar[d,"h^i_{F_S}(S\otimes_R C^\bullet_{\mathcal{M}})","\text{\rotatebox{90}{$\sim$}}"']\\

  S\otimes_R \mathcal{F}_R(H^i(C^\bullet_{\mathcal{M}}))
  \ar[d,"\text{id}_S\otimes h^i_{F_R}(C^\bullet_{\mathcal{M}})"',"\text{\rotatebox{90}{$\sim$}}"]
  && H^i(\mathcal{F}_S(S\otimes_R C^\bullet_{\mathcal{M}}))
  \ar[d,no head, shift right = 0.1em]\ar[d,no head,shift left=0.1em]\\

  S\otimes_R H^i(\mathcal{F}_R(C^\bullet_{\mathcal{M}}))
  \ar[d,"\text{id}_S\otimes H^i(\Theta)^{-1}"',"\text{\rotatebox{90}{$\sim$}}"]
  \ar[rr,"h^i_{S/R}(\mathcal{F}_R(C^\bullet_{\mathcal{M}}))"]
  && H^i(S\otimes_R\mathcal{F}_R(C^\bullet_{\mathcal{M}}))
  \ar[d,"H^i(\text{id}_S\otimes \Theta)^{-1}","\text{\rotatebox{90}{$\sim$}}"']\\
  
  S\otimes_R H^i(C^\bullet_{\mathcal{M}})  
  \ar[rr,"h^i_{S/R}(C^\bullet_{\mathcal{M}})"]
  && H^i(S\otimes_R C^\bullet_{\mathcal{M}})
\end{tikzcd}
\end{center}
The commutativity of the rectangle of maps in the top three rows is precisely the content of
Proposition \ref{simflat} (see specifically Corollary \ref{frob}) applied to the complex $C^\bullet_{\mathcal{M}}$. The square of maps in the bottom two rows is induced
from the diagram that results from applying $H^i(-)$ to
\begin{center}
  \begin{tikzcd}
  \mathcal{F}_R(C^\bullet_{\mathcal{M}})
  \ar[d,"\Theta^{-1}"',"\text{\rotatebox{90}{$\sim$}}"]
  \ar[r,"\text{nat}"]
  & S\otimes_R\mathcal{F}_R(C^\bullet_{\mathcal{M}})
  \ar[d,"(\text{id}_S\otimes \Theta)^{-1}","\text{\rotatebox{90}{$\sim$}}"']\\
  
  C^\bullet_{\mathcal{M}}  
  \ar[r,"\text{nat}"]
  &S\otimes_R C^\bullet_{\mathcal{M}}

  \end{tikzcd}
\end{center}
Recall that $C^\bullet_{\mathcal{M}}=C^\bullet\otimes_R \mathcal{M}$ and $\mathcal{F}_R(C^\bullet_{\mathcal{M}}) = C^\bullet \otimes_R \mathcal{F}_R(\mathcal{M})$, where $C^\bullet =\check{C}^\bullet(\mathbf{\underline{f}};R)$, so that the above
diagram is $C^\bullet\otimes_R-$ applied to
\begin{center}
  \begin{tikzcd}
  \mathcal{F}_R({\mathcal{M}})
  \ar[d,"\Theta^{-1}"',"\text{\rotatebox{90}{$\sim$}}"]
  \ar[r,"\text{nat}"]
  & S\otimes_R\mathcal{F}_R(\mathcal{M})
  \ar[d,"(\text{id}_S\otimes \Theta)^{-1}","\text{\rotatebox{90}{$\sim$}}"']\\
  
  \mathcal{M}  
  \ar[r,"\text{nat}"]
  &S\otimes_R \mathcal{M}
  \end{tikzcd}
\end{center}
This final diagram obviously commutes.
\end{proof}

\begin{corol}\label{quot}
  Let $R$ be a regular ring of prime characteristic $p>0$, let $J\subseteq R$ be an ideal such that $R/J$ is regular, and let $\mathcal{M}$ be an $F_R$-module. For any ideal $I\subseteq R$ and any $i\geq 0$, the natural map $H^i_I(\mathcal{M})/J H^i_I(\mathcal{M}) \to H^i_I(\mathcal{M}/J\mathcal{M})$
  is an $F_{R/J}$-module morphism.
\end{corol}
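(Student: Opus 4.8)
The plan is to obtain the statement as a direct specialization of Theorem \ref{morph} to the surjection $R\twoheadrightarrow S:=R/J$. First I would check that the hypotheses apply: both $R$ and $S=R/J$ are regular rings of prime characteristic $p>0$ and $\mathcal{M}$ is an $F_R$-module, so Theorem \ref{morph} applies to the ring map $R\to S$ and yields, for each ideal $I\subseteq R$ and each $i\geq 0$, a morphism of $F_S$-modules
\[
S\otimes_R H^i_I(\mathcal{M})\longrightarrow H^i_I(S\otimes_R\mathcal{M}).
\]
The whole proof then consists of recognizing this as the natural map in the statement and unwinding the identifications.

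Next I would rewrite both sides in the form appearing in the corollary. For any $R$-module $N$ there is a canonical isomorphism $S\otimes_R N\cong N/JN$; applied to $N=H^i_I(\mathcal{M})$ this identifies the source with $H^i_I(\mathcal{M})/JH^i_I(\mathcal{M})$, and applied to $N=\mathcal{M}$ it identifies $S\otimes_R\mathcal{M}$ with $\mathcal{M}/J\mathcal{M}$. For the target I would also invoke the base-independence of local cohomology: for an $S$-module $N'$ the module $H^i_I(N')$ computed over $R$ coincides with $H^i_{IS}(N')$ computed over $S$, since local cohomology depends only on the action of (the radical of) $I$ on the underlying module; hence $H^i_I(S\otimes_R\mathcal{M})=H^i_I(\mathcal{M}/J\mathcal{M})$. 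Under these identifications the map furnished by Theorem \ref{morph} is precisely the natural map $H^i_I(\mathcal{M})/JH^i_I(\mathcal{M})\to H^i_I(\mathcal{M}/J\mathcal{M})$ induced by functoriality of local cohomology from $\mathcal{M}\to\mathcal{M}/J\mathcal{M}$ (which factors through the quotient because $H^i_I(\mathcal{M}/J\mathcal{M})$ is killed by $J$).

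Finally I would confirm that the $F_S$-module structures match what is intended. In Theorem \ref{morph} the source $S\otimes_R H^i_I(\mathcal{M})$ is equipped with the $F_S$-structure obtained by base change along $R\to S$ from the $F_R$-module $H^i_I(\mathcal{M})$ (the structure built from the \v{C}ech complex in the discussion preceding Theorem \ref{morph}), which is exactly the $F_{R/J}$-structure on $H^i_I(\mathcal{M})/JH^i_I(\mathcal{M})$; and the target $H^i_I(S\otimes_R\mathcal{M})$ carries the $F_S$-structure coming from the $F_S$-module $S\otimes_R\mathcal{M}=\mathcal{M}/J\mathcal{M}$, which is the intended $F_{R/J}$-structure on $H^i_I(\mathcal{M}/J\mathcal{M})$. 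I do not expect a genuine obstacle: the entire content resides in Theorem \ref{morph}, and the only substantive input is the standard base-independence of local cohomology used above; everything else is bookkeeping of canonical identifications.
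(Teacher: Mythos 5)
Your proposal is correct and is exactly the intended argument: the paper states Corollary \ref{quot} without proof as an immediate specialization of Theorem \ref{morph} to the surjection $R\twoheadrightarrow R/J$, with the same identifications $S\otimes_R N\cong N/JN$ and the base-independence of local cohomology that you spell out. Nothing is missing.
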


\begin{theor}\label{charpj}
  Let $R$ be a regular ring of prime characteristic $p>0$, let $J\subseteq R$ be an ideal such that $R/J$ is regular, and let $\mathcal{M}$ be an $F$-finite $F_R$-module (e.g. $\mathcal{M}=R$). For any ideal $I\subseteq R$ and any $i\geq 0$, $$\text{Coker}\left(H^i_I(\mathcal{M}) \to H^i_I(\mathcal{M}/J\mathcal{M})\right)$$
  is an $F$-finite $F_{R/J}$-module, and hence, has finitely many associated primes. The module
  $H^i_I(J\mathcal{M})$ also has finitely many associated primes.
\end{theor}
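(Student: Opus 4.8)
The plan is to read this theorem as the payoff of Corollary \ref{quot}: once we know the comparison map is a morphism of $F_{R/J}$-modules, Lyubeznik's theory of $F$-finite $F$-modules \cite{lyufmod} does the rest. The facts I will invoke from \cite{lyufmod} are: over a regular ring of characteristic $p>0$ the category of $F$-finite $F$-modules is abelian; this category is closed under the local cohomology functors $H^i_I(-)$; base change along a ring homomorphism carries $F$-finite $F$-modules to $F$-finite $F$-modules; and every $F$-finite $F$-module has only finitely many associated primes.

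First I would dispose of the cokernel. Since $\mathcal{M}$ is $F$-finite over the regular ring $R$, the module $H^i_I(\mathcal{M})$ is an $F$-finite $F_R$-module, and hence its quotient $H^i_I(\mathcal{M})/JH^i_I(\mathcal{M})=(R/J)\otimes_R H^i_I(\mathcal{M})$ is an $F$-finite $F_{R/J}$-module. Likewise $\mathcal{M}/J\mathcal{M}=(R/J)\otimes_R\mathcal{M}$ is $F$-finite over $R/J$, so (using that $R/J$ is regular) $H^i_I(\mathcal{M}/J\mathcal{M})$ --- which is just $H^i_{I(R/J)}(\mathcal{M}/J\mathcal{M})$ computed over $R/J$ --- is an $F$-finite $F_{R/J}$-module. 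By Corollary \ref{quot} the natural map $H^i_I(\mathcal{M})/JH^i_I(\mathcal{M})\to H^i_I(\mathcal{M}/J\mathcal{M})$ is a morphism in this abelian category, so its cokernel is $F$-finite; since $H^i_I(\mathcal{M})$ surjects onto $H^i_I(\mathcal{M})/JH^i_I(\mathcal{M})$, that cokernel is canonically $\text{Coker}\big(H^i_I(\mathcal{M})\to H^i_I(\mathcal{M}/J\mathcal{M})\big)$. An $F$-finite $F_{R/J}$-module has finitely many associated primes, giving the first assertion.

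Next I would handle $H^i_I(J\mathcal{M})$ by feeding the short exact sequence $0\to J\mathcal{M}\to \mathcal{M}\to \mathcal{M}/J\mathcal{M}\to 0$ into the long exact sequence of $H^\bullet_I(-)$. The relevant four-term segment yields a short exact sequence
$$0\to \text{Coker}\big(H^{i-1}_I(\mathcal{M})\to H^{i-1}_I(\mathcal{M}/J\mathcal{M})\big)\to H^i_I(J\mathcal{M})\to K\to 0,$$
where $K$ is the image of $H^i_I(J\mathcal{M})\to H^i_I(\mathcal{M})$, hence a submodule of $H^i_I(\mathcal{M})$. The left-hand term has finitely many associated primes by the first part applied in degree $i-1$; the module $K$ has finitely many associated primes because $H^i_I(\mathcal{M})$ is $F$-finite over $R$ (so has finitely many associated primes) and $\text{Ass}$ only shrinks under passage to submodules. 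Since $\text{Ass}$ of the middle term of a short exact sequence lies in the union of the $\text{Ass}$ of the outer two, $\text{Ass}\, H^i_I(J\mathcal{M})$ is finite. (For $i=0$ one reads $H^{-1}=0$ and the argument degenerates to the inclusion $\Gamma_I(J\mathcal{M})\subseteq\Gamma_I(\mathcal{M})$.)

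Because the long exact sequence and the behavior of $\text{Ass}$ in short exact sequences are routine, the only real content is the first part, and within it the point that needs care is verifying that the cokernel genuinely lives in the abelian category of $F$-finite $F_{R/J}$-modules. This rests on the compatibility of Frobenius with the (non-flat) base change $R\to R/J$ --- which is already built into Corollary \ref{quot} --- together with the stability of $F$-finiteness under local cohomology and under base change; I would simply cite \cite{lyufmod} for these and be explicit that the base-change step does not require flatness.
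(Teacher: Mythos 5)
Your proposal is correct and follows essentially the same route as the paper: identify the two cokernels using the surjection $H^i_I(\mathcal{M})\twoheadrightarrow H^i_I(\mathcal{M})/JH^i_I(\mathcal{M})$, apply Corollary \ref{quot} together with Lyubeznik's closure and finiteness theorems for $F$-finite $F$-modules, and then run the long exact sequence of $0\to J\mathcal{M}\to\mathcal{M}\to\mathcal{M}/J\mathcal{M}\to 0$ to deduce the claim for $H^i_I(J\mathcal{M})$. No gaps.
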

\begin{proof}
  Note that $H^i_I(\mathcal{M})$ is an $F$-finite $F_R$-module, and $H^i_I(\mathcal{M}/J\mathcal{M})$ is an $F$-finite $F_{R/J}$-module \cite[Proposition 2.10]{lyufmod}. The map $H^i_I(\mathcal{M}) \to H^i_I(\mathcal{M}/J\mathcal{M})$ factors through the natural map $H^i_I(\mathcal{M}) \to H^i_I(\mathcal{M})/J H^i_I(\mathcal{M})$, and since $R\twoheadrightarrow R/J$ is surjective, the images of $H^i_I(\mathcal{M})$ and $H^i_I(\mathcal{M})/J H^i_I(\mathcal{M})$ inside $H^i_I(\mathcal{M}/J\mathcal{M})$ are equal. Thus,
  $$\text{Coker}\left(H^i_I(\mathcal{M}) \to H^i_I(\mathcal{M}/J\mathcal{M})\right) =
  \text{Coker}\left(H^i_I(\mathcal{M})/JH^i_I(\mathcal{M}) \to H^i_I(\mathcal{M}/J\mathcal{M})\right)$$
  where the latter is the cokernel of a morphism of $F$-finite $F_{R/J}$-modules by Corollary \ref{quot}. It is therefore itself an $F$-finite $F_{R/J}$-module \cite[Theorem 2.8]{lyufmod}, and accordingly, has a finite set of associated primes \cite[Theorem 2.12(a)]{lyufmod}.
  Regarding the claim about the associated primes of $H^i_I(J\mathcal{M})$, apply
  $\Gamma_I(-)$ to $0\to J\mathcal{M}\to\mathcal{M}\to\mathcal{M}/J\mathcal{M}\to 0$ to obtain the exact sequence
\begin{center}{
\begin{tikzpicture}[descr/.style={fill=white,inner sep=1.5pt}]
        \matrix (m) [
            matrix of math nodes,
            row sep=1em,
            column sep=2.5em,
            text height=1.5ex, text depth=0.25ex
        ]
        { & \cdots & H^{i-1}_I(\mathcal{M}) & H^{i-1}_I(\mathcal{M}/J\mathcal{M}) \\
          & H^{i}_I(J\mathcal{M}) & H^{i}_I(\mathcal{M}) & \cdots \\
        };

        \path[overlay,->, font=\scriptsize,>=latex]
        (m-1-2) edge (m-1-3)
        (m-1-3) edge (m-1-4)
        (m-1-4) edge[out=345,in=165] (m-2-2)
        (m-2-2) edge (m-2-3)
        (m-2-3) edge (m-2-4)
;
\end{tikzpicture}}
\end{center}
  We therefore have a short exact sequence
  $$
  0\to \text{Coker}\left(H^{i-1}_I(\mathcal{M}) \to H^{i-1}_I(\mathcal{M}/J\mathcal{M})\right) \to H^i_I(J\mathcal{M})\to N\to 0
  $$
  for some submodule $N\subseteq H^i_I(\mathcal{M})$, and the stated result now follows at once. 
\end{proof}

Let  $(R,\mathfrak{m},K)$ be a regular local ring. Recall that a parameter ideal $J\subseteq R$ is called {\it regular} if it is generated by an $R$-regular sequence whose images in $\mathfrak{m}/\mathfrak{m}^2$ are linearly independent over $K$. Every ideal $J$ such that $R/J$ is regular has this form. If $R$ is complete and contains a field, then by then Cohen Structure Theorem, all examples of regular parameter ideals are isomorphic to an example of the form $R=K[[x_1,\cdots,x_m,z_1,\cdots,z_n]]$ and $J=(x_1,\cdots,x_m)R$ for some $m,n\geq 0$. 

\begin{corol}
  Let $(R,\mathfrak{m},K)$ be a regular local ring containing a field of prime characteristic $p>0$ and $J\subseteq R$ be a regular parameter ideal. For any ideal $I\subseteq R$ and any $i\geq 0$, the module
  $H^i_I(J)$ has finitely many associated primes.
\end{corol}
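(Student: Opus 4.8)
The plan is to obtain this as an immediate specialization of Theorem~\ref{charpj}. The first step is to reconcile the hypothesis with the setting of that theorem: as noted just above the statement, an ideal $J\subseteq R$ is a regular parameter ideal precisely when $R/J$ is regular, so the ring $R$ (regular, of prime characteristic $p>0$) together with the ideal $J$ (with $R/J$ regular) satisfies exactly the standing hypotheses of Theorem~\ref{charpj}. The second step is to select the $F_R$-module: take $\mathcal{M}=R$, equipped with its canonical structure isomorphism $R\xrightarrow{\sim}\mathcal{F}_R(R)$ (available since Frobenius is flat on the regular ring $R$), which is an $F$-finite $F_R$-module -- indeed the prototypical one flagged in the statement of Theorem~\ref{charpj}. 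With these identifications $J\mathcal{M}=JR=J$, so Theorem~\ref{charpj} yields directly that $H^i_I(J)=H^i_I(J\mathcal{M})$ has finitely many associated primes for every ideal $I\subseteq R$ and every $i\geq 0$.

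There is essentially no obstacle here: all of the genuine work -- the compatibility of the natural map $H^i_I(\mathcal{M})\to H^i_I(\mathcal{M}/J\mathcal{M})$ with the relevant $F_{R/J}$-module structures (Corollary~\ref{quot}), and the ensuing finiteness of associated primes via Lyubeznik's theory of $F$-finite $F$-modules -- is already carried out in the proof of Theorem~\ref{charpj}. I would only remark, for context, that the local and equicharacteristic hypotheses in the corollary are not actually needed for the conclusion; they serve merely to phrase the result in the classical language of parameter ideals, and by the Cohen Structure Theorem every such $J$ is, after completion, of the model form $(x_1,\cdots,x_m)$ in $K[[x_1,\cdots,x_m,z_1,\cdots,z_n]]$.
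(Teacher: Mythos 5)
Your proposal is correct and is exactly the derivation the paper intends: the remarks preceding the corollary identify ``regular parameter ideal'' with ``$R/J$ regular,'' and the conclusion is then the special case $\mathcal{M}=R$ of Theorem~\ref{charpj}. Your side remark that the local and equicharacteristic hypotheses serve only to phrase the statement in the language of parameter ideals is also consistent with the paper, which proves the more general Theorem~\ref{charpj} for any regular ring of characteristic $p>0$ with $R/J$ regular.
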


\section*{Acknowledgements}
\noindent The author would like to thank her advisor, Mel Hochster, for his invaluable comments, feedback, and suggested improvements to this project, and for his support throughout her doctoral work more generally.

\bibliographystyle{alpha}
\bibliography{LocalCohomologyParameterIdeal}

\end{document}